\newcommand{\vio}{\color{black}}      
\newcommand{\tvio}{\color{black}}
\newcommand{\lam}{\lambda}
\newcommand{\la}{\langle}
\newcommand{\ra}{\rangle}
\newcommand{\RR}{\mathbb{R}}
\newcommand{\Rbb}{\mathbb{R}}
\newcommand{\Null}{{\rm Null}}
\newcommand{\Sbb}{\mathbb{S}}
\newcommand{\aff}{{\rm aff}}
\newcommand{\lin}{{\rm lin}}
\newcommand{\app}{{\rm app}}
\newcommand{\appl}{{\rm appl}}
\newcommand{\disten}{{\rm dist}}
\newcommand{\dxx}{\Delta x}
\newcommand{\dXX}{\Delta X}
\newcommand{\dxi}{\Delta \xi}
\newcommand{\dZZ}{\Delta Z}
\newcommand{\dGG}{\Delta \Gamma}
\newcommand{\dtG}{\Delta\widetilde\Gamma}
\newcommand{\mA}{\mathcal{A}}
\newcommand{\mB}{\mathcal{B}}
\newcommand{\mC}{\mathcal{C}}
\newcommand{\mG}{\mathcal{G}}
\newcommand{\mH}{\mathcal{H}}
\newcommand{\mI}{\mathcal{I}}
\newcommand{\mN}{\mathcal{N}}
\newcommand{\mO}{\mathcal{O}}
\newcommand{\mQ}{\mathcal{Q}}
\newcommand{\mT}{\mathcal{T}}
\newcommand{\mU}{\mathcal{U}}
\newcommand{\mW}{\mathbb{W}}
\newcommand{\mX}{\mathbb{X}}
\newcommand{\mY}{\mathbb{Y}}
\newcommand{\mZ}{\mathbb{Z}}
\newcommand{\dist}{{\rm dist}}
\newcommand{\Diago}{{\rm Diag}}
\newcommand{\tIm}{{\rm Im}}
\newcommand{\nn}{{\nonumber}}
\begin{document}
	
	\title{A quadratically convergent semismooth Newton method for
		nonlinear semidefinite programming without generalized Jacobian regularity}
		\titlerunning{A quadratically convergent semismooth Newton method without generalized Jacobian regularity} 
	\author{Fuxiaoyue Feng \and
		Chao Ding \and Xudong Li
	}
	\institute{F.X.Y. Feng\at
		Institute of Applied Mathematics, Academy of Mathematics and Systems Science, Chinese Academy of Sciences, Beijing,  P.R. China,  School of Mathematical Sciences, University of Chinese Academy of Science, Beijing,  P.R. China. \\
		\email{fengfuxiaoyue@amss.ac.cn}%
		\and
		C. Ding \at
		Institute of Applied Mathematics, Academy of Mathematics and Systems Science, Chinese Academy of Sciences, Beijing,  P.R. China.   \\
		\email{dingchao@amss.ac.cn} 
		\and
		X.D. Li \at
		School of Data Science, Fudan University, Shanghai,  P.R. China.  \\
		\email{lixudong@fudan.edu.cn}
	}
 
	\date{This version: November 23, 2024}
	
	\maketitle
	
	\begin{abstract}
		We introduce a quadratically convergent semismooth Newton method for nonlinear semidefinite programming that eliminates the need for the {generalized Jacobian regularity}, a common yet stringent requirement in existing approaches. Our strategy involves identifying a single nonsingular element within the Bouligand generalized Jacobian, thus avoiding the standard requirement for nonsingularity across the entire generalized Jacobian set, which is often too restrictive for practical applications. The theoretical framework is supported by introducing the weak second order condition (W-SOC) and the weak strict Robinson constraint qualification (W-SRCQ). These conditions not only guarantee the existence of a nonsingular element in the generalized Jacobian but also forge a primal-dual connection in linearly constrained convex quadratic programming. The theoretical advancements further lay the foundation for the {\tvio algorithmic design} of a novel semismooth Newton method, which integrates a correction step to address degenerate issues. Particularly, this correction step ensures the local convergence as well as a superlinear/quadratic convergence rate of the proposed method. Preliminary numerical experiments corroborate our theoretical findings and underscore the practical effectiveness of our method.   
		\vskip 10 true pt
		
		\noindent {\bf Key words:}   Semismooth Newton method, {\tvio singularity}, generalized Jacobian, nonlinear semidefinite programming, second order conditions, constraint qualifications.
		\vskip 10 true pt
		
		\noindent {\bf AMS subject classification: 49K10, 49J52, 90C30, 90C22, 90C33}
	\end{abstract}

\section{Introduction}\label{section:intro}

Let $\mX$ and $\mY$ be two Euclidean spaces each equipped with  {an inner} product $\langle \cdot,\cdot\rangle$ and its induced norm $\|\cdot\|$. Consider the following optimization problem (OP):
\begin{align}
	\begin{aligned}
		\min\quad        & f(x) \\
		\mbox{s.t.}\quad & G(x) \in K,
	\end{aligned}
	\label{prog:P}
\end{align}
where $K \subseteq \mY$ is a nonempty closed convex set and $f: \mX \to \mathbb{R}$, $G: \mX \to \mY$ are twice continuously differentiable. 

Let $L:{\mathbb X}\times{\mathbb Y}\to\mathbb{R}$ be the Lagrangian function of problem \eqref{prog:P} defined by
\begin{equation*} 
	L(x;y):=f(x)+ \langle y, {  G}(x) \rangle, \quad (x,y)\in{\mathbb X}\times{\mathbb Y}.
\end{equation*}
For any $y\in{\mathbb Y}$, denote the derivative of $L(\cdot;y)$ at $x\in\mathbb{X}$ by $L'_x(x;y)$ and denote the adjoint of $L'_x(x;y)$ by $\nabla_x L(x;y)$. The Karush-Kuhn-Tucker (KKT) optimality conditions for problem \eqref{prog:P} {\tvio take the following form}: 
\begin{align}
	\left\{
	\begin{aligned}
		&\nabla_x {L}(x,y) = 0, \\
		&y \in \mathcal{N}_{K}(G(x)),
	\end{aligned}
	\right. \label{KKT:P}
\end{align}
where  $\mathcal{N}_K(G(x))$ is the normal cone to $K$ at $G(x)$ in the context of convex analysis \cite{rockafellar1997convex}. {\tvio We say that} $(x,y) \in \mZ := \mX \times \mY$ is a KKT point of \eqref{prog:P} if it solves \eqref{KKT:P}. The set of Lagrange multipliers associated with a feasible point $x$ is defined by $M(x) = \{y \mid (x,y) \text{ is a KKT point}\}$. For a given feasible point $\overline{x}$, if $M(\overline{x}) \neq \emptyset$, then we call it a stationary point of \eqref{prog:P}. It is well-known that $(\overline{x}, \overline{y})$ is a KKT point of \eqref{prog:P} if and only if it is a solution of the nonsmooth system 
\begin{equation}\label{eq:natural-mapping}
	F(x,y) := \begin{bmatrix}
		\nabla_x L(x,y) \\
		-G(x) + \Pi_{K}(G(x) + y)
	\end{bmatrix}=0,
\end{equation}
where $\Pi_K : \mY \to \mY$ is the metric projection operator onto $K$, i.e., {\tvio$\Pi_K(z) = {\rm arg}\min_{y' \in K} \frac{1}{2}\| z - y'\|^2$} for $z \in \mY$. 

When solving $F(x,y) = 0$, one may encounter challenges related to the nonsmoothness of $F$, primarily arising from the nonsmooth nature of the metric projection operator $\Pi_{K}(\cdot)$, especially when $K$ is not a subspace. Fortunately, for various problems, including nonlinear programming (NLP), nonlinear second-order cone programming, and nonlinear semidefinite programming (NLSDP), the corresponding nonsmooth map $F$ exhibits the so-called (strong) semismoothness (see Definition \ref{def:semismooth}). This property is conducive to the use of nonsmooth Newton methods. Qi and Sun \cite{qi1993nonsmooth}  introduced a semismooth Newton method, which takes the following semismooth Newton step in each iteration:
$$\tvio (x^{k+1};y^{k+1}) = (x^k;y^k) - (U^k)^{-1}F(x^k,y^k)$$
with $U^k$ being selected from the Bouligand generalized Jacobian $\tvio \partial_B F(x^k,y^k)$ (see \eqref{eq:partialB} for definition) or the Clarke generalized Jacobian $\tvio \partial_C F(x^k,y^k)$ (see \eqref{eq:partialC} for definition). 
{The semismooth Newton method, renowned for its high efficiency, has found widespread applications across various domains. It has been effectively utilized for tackling variational inequalities and constrained optimization problems in function spaces \cite{ulbrich2011semismooth}, infinite-dimensional linear complementarity problems \cite{hintermuller2010semismooth} and computing the nearest correlation matrix \cite{qi2006quadratically}. Notably, {\tvio its efficiency has been} particularly pronounced in solving augmented Lagrangian subproblems, whose versatility is underscored by its implementation in several influential software packages, including SDPNAL \cite{zhao2010newton}, SDPNAL+ \cite{yang2015sdpnal+}, QSDPNAL \cite{li2018qsdpnal}, and SSNAL \cite{li2018highly}. For an in-depth exploration, one may refer to \cite{Qi1999,hintermuller2010semismooth,facchinei2003finite}.}

It is well-known (see e.g., \cite{qi1993nonsmooth,kummer1992newton,qi1993convergence}) that the (quadratic) superlinear convergence of semismooth Newton methods {\tvio hinges on the} (strong) {g-semismoothness (see Definition \ref{def:semismooth})} of $F$ and the uniform boundedness of  $\{(U^k)^{-1}\}$. {To guarantee such uniform boundedness, typically, one {\tvio has to assume that the KKT mapping $F$ satisfies a certain generalized Jacobian regularity condition at} the solution point $(\overline{x},\overline{y})$, for example, all elements in some generalized Jacobian sets of $F$ at $(\overline{x},\overline{y})$ are nonsingular.} Particularly, one may assume that every element in \( \partial_B F(\overline{x},\overline{y}) \) or even \( \partial_C F(\overline{x},\overline{y}) \) is nonsingular, known as the BD-regularity or CD-regularity, respectively. However, this condition is frequently demanding and may not always be fulfilled in practical applications. Thus, the following important question arises: 
\begin{quotation}
	{\em Can we design a locally superlinear or quadratic convergent semismooth Newton method for solving \eqref{eq:natural-mapping} without assuming the generalized Jacobian regularity of $F$?
    }
\end{quotation}
{\tvio In this paper, we provide an affirmative answer to this question in the context of nonlinear semidefinite programming (NLSDP).} Specifically, for the NLSDP, we design a semismooth Newton method with a novel correction step to avoid the requirement of nonsingularity across the entire generalized Jacobian of $F$, and prove its fast local convergence rates, thus advancing the traditional paradigms.{\tvio As illustrated in Example \ref{example:4}, despite the presence of singular elements in the Bouligand generalized Jacobian $\partial_B F(\overline{x},\overline{y})$ at the KKT point $(\overline{x},\overline{y})$ and the lack of the calmness property for $F$, our method demonstrates local quadratic convergence, as shown in Figure \ref{fig:ex4}.}

For the optimization problem \eqref{prog:P}, the characterization of the generalized Jacobian regularity for the nonsmooth function $F$ is closely intertwined with perturbation theory, a fundamental topic in optimization. Indeed, the perturbation theory is crucial for both  theoretical insights and practical algorithm design. Over the past four decades, significant advancements in this field have been well-documented in the literature \cite{bonnans2013perturbation,klatte_nonsmooth_2002,rockafellar2009variational,facchinei2003finite,mordukhovich2006variational}, 
especially concerning \eqref{prog:P} with $K$ being a polyhedral set (i.e., $K$ is the intersection of a finite number of half-spaces). 
However, less exploration has been done regarding non-polyhedral cases, such as $K$ being the set of positive semidefinite matrices.

In the realm of nonlinear semidefinite programming, Sun \cite{sun2006strong} established that under the Robinson constraint qualification (RCQ) (Definition \ref{def:RCQ}), the nonsingularity of $\partial_C F(\overline x,\overline y)$ (i.e., the CD-regularity of $F$) at a local optimal solution $\overline{x}$ and an associated  multiplier $\overline{y}$ is equivalent to the strong second-order sufficient condition (S-SOSC) (Definition \ref{def:S-SOSC}) and the constraint nondegeneracy (Definition \ref{def:constraint nondegeneracy}), and is also equivalent with the strong regularity introduced by Robinson \cite{robinson1980strongly} of the solution to the KKT system \eqref{KKT:P}. For linear SDP problems, Chan and Sun \cite{chan2008constraint} provided deeper insights into the strong regularity, demonstrating that under {\tvio the RCQ}, primal and dual constraint nondegeneracies, the nonsingularity of  $\partial_B F(\overline x,\overline y)$ (i.e., the BD-regularity of $F$), CD-regularity and the strong regularity at $(\overline x,\overline y)$ are all equivalent. This comprehensive understanding underscores that the conditions for the nonsingularity of all elements in the generalized Jacobians of $F$ at $(\overline x,\overline y)$  may be stringent and not consistently be satisfied in practice. For instance, the S-SOSC condition presupposes the {isolatedness} of a local optimal solution, a premise that is often violated in statistical optimization applications such as LASSO \cite{tibshirani1996regression}. Consequently, it is crucial to develop a new framework of semismooth Newton methods that can bypass the generalized Jacobian regularity and still achieve a local superlinear or quadratic convergence rate.

In this paper, we propose to relax the stringent generalized Jacobian regularity conditions by identifying a single nonsingular element within the Bouligand generalized Jacobian. However, characterizing the nonsingularity of even a single element within the generalized Jacobians of $F$ at a KKT point $(\overline{x},\overline{y})$, especially in non-polyhedral cases, remains an unresolved challenge. When $K$ is polyhedral, {\tvio Proposition 6 in Izmailov and Solodov \cite{izmailov2003karush}} indicated that {the weak strict Robinson constraint qualification (W-SRCQ) and the weak second-order condition (W-SOC) (see Definitions \ref{def:W-SRCQ} and \ref{def:W-SOC}, respectively) guarantee the existence of a nonsingular element in the parametric generalized Jacobian. Here the parametric generalized Jacobian, defined as  differentials of all possible parametric systems, contains $\partial_B F(\overline x,\overline y)$. Indeed, a straightforward deduction from their findings indicates that the W-SRCQ and W-SOC also guarantee the presence of  a nonsingular element in $\partial_B F(\overline x,\overline y)$.} However, these conditions are inadequate when applied to NLSDP. Specifically, the W-SRCQ and W-SOC are insufficient to ensure the existence of a nonsingular element in $\partial_B F(\overline x,\overline y)$, as demonstrated by our counterexample (Example \ref{example:non-SRCQ}). Meanwhile, in NLSDP, the presence of a nonsingular generalized Jacobian element, as in Example \ref{example:4}, does not necessarily imply calmness, an important property described in \cite[Definition 9(30)]{rockafellar2009variational}. The lack of calmness may lead to slow progress of algorithms for solving the KKT system \eqref{eq:natural-mapping}, as the associated solution mapping exhibits heightened sensitivity to small perturbations.

In this paper, we show that {the existence of a nonsingular element in} $\partial_B F(\overline x,\overline y)$ can be ensured under the W-SOC with constraint nondegeneracy or the S-SOSC with the W-SRCQ at a KKT point $(\overline{x},\overline{y})$. Additionally, our results indicate that the W-SRCQ and W-SOC are sufficient for the existence of a nonsingular element in $\partial_C F(\overline x,\overline y)$, and are also necessary for convex cases. More interestingly, we uncover a profound primal-dual connection between the W-SOC and W-SRCQ for linearly constrained convex quadratic semidefinite programming. These connections mirror those found between the strict Robinson constraint qualification (SRCQ) (Definition \ref{def:SRCQ}) and the second-order sufficient condition (SOSC) (Definition \ref{def:SOSC}) for linearly constrained quadratic semidefinite programming \cite{han2018linear}, as well as those between the constraint nondegeneracy and the S-SOSC in the setting of linear semidefinite programming \cite{chan2008constraint}. We summarize {our} results and connections in Figure \ref{figure:con}. Based on the theoretical findings, we are able to design an inexact semismooth Newton method with a correction step. Each iteration commences with a semismooth Newton step, followed by a correction step designed to ensure the nonsingularity. Under the W-SOC and constraint nondegeneracy (or the S-SOSC and W-SRCQ), we show the local convergence and superliner convergence of the designed algorithm.  Furthermore, if the function is twice Lipschitz continuously differentiable, the convergence rate can be further improved to quadratic.
\begin{figure}[h]
\centering
\includegraphics[scale=0.6]{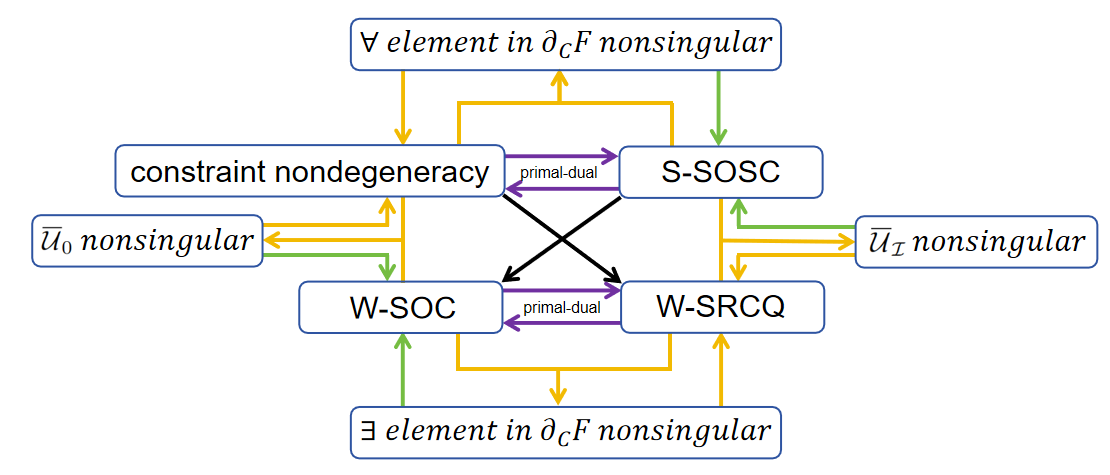}
\caption{Connections of second order conditions, constraint qualifications and nonsingularity of generalized Jacobians. Here the {\color{black}black} arrow holds for \eqref{prog:P}; the {\color{orange}orange} arrow holds for NLSDP; the {\color{green}green} arrow holds for CSDP; and the {\color{violet} violet} arrow holds for QSDP. Moreover, ``primal-dual'' means that the starting point of the arrow for the primal problem implies the arrival point of the arrow for the dual problem.}
\label{figure:con}
\end{figure}
 
The remaining parts of this paper are organized as follows. Section \ref{section:pre} provides some basic definitions and preliminary results concerning the variational properties of the metric projection projector over the positive semidefinite cone. Section \ref{section:nonsingularity} delves into the {\tvio sufficient conditions and necessary conditions} for the existence of one nonsingular element in the generalized Jacobians of $F$ for NLSDP. The primal-dual connections of the W-SOC and the W-SRCQ are explored in Section \ref{section:cha} for convex {quadratic} semidefinite programming. Then, we present a semismooth Newton method with a correction step in Section \ref{section:5}, and some preliminary numerical results in Section \ref{section:numerical}. Finally, we conclude this paper and highlight potential directions for future research in Section \ref{section:conclusion}.

\section{Preliminaries}
\label{section:pre}

{We first give the following notation that will be used throughout the paper. Let $\mathbb{X}$ and $\mathbb{Y}$ be two real Euclidean spaces. Given a set $S\subseteq \mathbb{X}$ and a point $x\in \mathbb{X}$, the distance from $x$ to $S$ is denoted by
	$${\rm dist}(x,S):=\mbox{inf}\{\|y-x\|\,|\ y\in S\}.$$ For a linear operator ${\cal A}: \mathbb{X}\rightarrow \mathbb{Y}$, ${\cal A}^*$ denotes the adjoint of the  linear operator ${\cal A}$.  We denote by $A^T$ the transpose of a given matrix $A$. For a mapping
	$\Phi:\mathbb{X}\rightarrow \mathbb{Y}$ and  $x\in \mathbb{X}$, $\Phi'(x)$
	stands for classical derivative or the Jacobian of $\Phi$ at $x$ and $\nabla \Phi(x)$ the adjoint of the Jacobian. 
We use  $\mathbb{S}^n$ to denote the linear space of all $n\times n$ real symmetric matrices and use $\mathbb{S}^n_+$ to denote the closed convex cone of all $n\times n$ positive semidefinite matrices in $\mathbb{S}^n$.  For a function $g:\mathbb{X} \rightarrow \mathbb{R}$, we denote $g^+(x):=\max \{0, g(x)\}$ and if it is vector-valued then the maximum is taken componentwise. 
} Some other notations are listed below:
 \begin{itemize}
 	\item Let ${\cal O}^{n}$ be the set of all $n\times n$ orthogonal matrices.
 	\item For any $Z\in\mathbb{R}^{m\times n}$, we denote by $Z_{ij}$ the $(i,j)$-th entry of $Z$.
 	\item For any $Z\in\mathbb{R}^{m\times n}$ and a given index set ${\cal J}\subseteq  \{1,\ldots, n\}$,  we use $ Z_{{\cal J}}$ to denote the sub-matrix of $Z$ obtained by removing all the columns of $Z$ not in ${\cal J}$. In particular, we use $Z_{j}$ to represent the $j$-th column of $Z$, $j=1,\ldots,n$.
 	\item   Let ${\cal I}\subseteq \{1,\ldots, m\}$ and  ${\cal J}\subseteq \{1,\ldots, n\}$ be two index sets. For  any    $Z\in\mathbb{R}^{m\times n}$, we use $Z_{{\cal I}{\cal J}}$ to
 	denote the $|{\cal I}|\times|{\cal J}|$ sub-matrix of $Z$ obtained  by removing all the rows of $Z$ not in ${\cal I}$ and all the columns of $Z$ not in  ${\cal J}$.
 	\item We use $``\circ"$ to denote the {\tvio  Hadamard product} between matrices, i.e., for any two matrices $A$ and $B$ in $\mathbb{R}^{m\times n}$ the $(i,j)$-th entry of $  Z:= A\circ B \in \mathbb{R}^{m\times n}$ is
 	$Z_{ij}=A_{ij} B_{ij}$.
 	\item Let ${\rm Diag}(\cdot):\mathbb{R}^n\to \mathbb{S}^n$ be the linear map such that for any $x\in \mathbb{R}^n$, ${\rm Diag}(x)$ denotes the diagonal matrix whose $i$-th diagonal entry is $x_i$, $i=1, \ldots, n$.
 	\item Let $\lin(C)$ be the linearity space of a closed convex set $C\subseteq \mathbb{X}$, i.e., the largest subspace in $C$.  Meanwhile, let $\aff(C)$ be the affine hull of $C$, i.e., the smallest subspace containing $C$.
 	\item For a map $\Phi:\mX\to \mY$, define $\tIm(\Phi):=\{\Phi(x)\ |\ x\in\mX\}$ and $\Null(\Phi):=\{x\in\mX\ |\ \Phi(x)=0\}$.  
 	\item The {\tvio support function} of a set $S\subseteq \mX$ is defined by $\sigma(x,S)=\sup_{s\in S} \la x,s\ra $ for $x\in\mX$.
 	\item For $r>0$ and $\overline X\in\mX$, the closed ball centered at $\overline X$ with radius {\tvio$r$ is  denoted} by $\mathbb{B}({\overline{X}},r)=\{X\in\mX\ |\ \|X-\overline X\|\le r\}$. 
 	\item For $f,g:\mX\to\RR$, $f(x)=\Theta(g(x))$ for $x\to \overline x$ means that $f(x)=O(g(x))$ and $g(x)=O(f(x))$.
 \end{itemize}

	For a given map $\Phi:\mathbb{X}\rightarrow \mathbb{Y}$, the directional differential of {\tvio $\Phi$ at a point} $x\in \mX$ along a direction $d\in \mX$ is defined by $$\Phi'(x;d)=\lim_{t\downarrow 0}\frac{F(x+td)-F(x)}{t}\quad  \mbox{\tvio if this limit exists.}$$ 
	$\Phi$ is directionally differentiable at $x\in\mX$ if $\Phi'(x;d)$ exists for all $d\in\mX$. 

    Suppose that $\Phi: N\subseteq \mathbb{X} \to  \mathbb{Y}$ is a locally Lipschitz continuous function on the open set $N$. Then, according to Rademacher's theorem \cite{Rademacher1919}, $\Phi$ is almost everywhere  F-differentiable in $N$.  Let $D_\Phi$ be the set of points in $N$ where $\Phi$ is differentiable. Then the Bouligand generalized Jacobian of $\Phi$ at $x\in N$ is denoted by \cite{qi1993convergence}:
	\begin{equation}\label{eq:partialB}
		\partial_{B}\Phi(x):=\big\{ \lim_{D_{\Phi}\ni x^{k}\to x} \Phi'(x^{k}) \big\}\, ,
	\end{equation}
	and the Clarke generalized Jacobian  of $\Phi$ at $x\in N$ \cite{Clarke1983OptimizationNA}  takes the form:
	\begin{align}\label{eq:partialC}
		\partial_C \Phi(x)={\rm conv}\big\{\partial_{B} \Phi(x)\big\},
	\end{align} 
	where ``${\rm conv}$'' stands for the convex hull in the usual sense of convex analysis \cite{rockafellar1997convex}.
	The following g-semismoothness of a locally Lipschitz continuous function is an extension of the semismoothness. {\tvio The concept of semismoothness was initially introduced by Mifflin \cite{mifflin1977semismooth} for real-valued functions and later extended to vector-valued functions by Qi and Sun \cite{qi1993nonsmooth}. This concept was further extended to g-semismoothness by Gowda \cite{doi:10.1080/10556780410001697668}, which applies to functions without requiring directional differentiability.}
	\begin{definition}\label{def:semismooth}
	Let $\Phi: N\subseteq \mathbb{X} \to  \mathbb{Y}$ be a locally Lipschitz continuous function on the open set $N$. The function $\Phi$ is said to be  g-semismooth  at a point $x \in N$  if
	for any $y\to x $ and $V \in \partial_C \Phi(y)$,
	\[
	\Phi(y) -\Phi(x) - V(y-x) = o(||y- x|| )\, .
	\]
	The function $\Phi$ is said to be  strongly g-semismooth at $x$  if
	for any $y\to x $ and $V \in \partial_C \Phi(y)$,
	\[
	\Phi(y) - \Phi(x) - V(y- x) = O(||y-x||^2)\, .
	\]
	Furthermore, the  function $\Phi$ is said to be  (strongly) semismooth at $x\in N$  if (i) $\Phi$ is directional differentiable at $x$;
    and (ii) $\Phi$ is   (strongly) g-semismooth.
\end{definition}

\subsection{{Background in variational analysis}}

{In this subsection, we introduce some variational properties related to problem \eqref{prog:P}, which will be useful in our subsequent discussions.}

{For the closed convex set $K\subseteq \mathbb{Y}$, the tangent cone to $K$ at $y\in\mY$ is defined by}
\begin{equation}\label{eq:def-Tangent-g}
		\mT_K(y)=\{d\in\mY\ |\ \exists\, t^k\downarrow 0,\ {\dist}(y+t^kd,K)=o(t^k)\}.
\end{equation}
Let $\overline{x}$ be a given feasible solution to \eqref{prog:P}. It is well-known \cite[Theorem 3.9 and Proposition 3.17]{bonnans2013perturbation} that for the feasible solution $\overline{x}$, the following Robinson constraint qualification is equivalent to the nonemptyness and boundedness of the corresponding Lagrange multiplier set $M(\overline{x})$.
\begin{definition}\label{def:RCQ}
	The Robinson constraint qualification (RCQ) is said to hold at a feasible point $\overline x$ if
\begin{equation*}
G'(\overline x)\mX+\mT_K(G(\overline x))=\mY.
\end{equation*}
\end{definition}

{For a stationary solution $\overline{x}$ of \eqref{prog:P}, the strict Robinson constraint qualification condition is defined as follows.}
\begin{definition}\label{def:SRCQ}
Let $\overline{x}\in\mathbb{X}$ be a stationary point of \eqref{prog:P} with a Lagrange multiplier $\overline{y}\in M(\overline x)$. The strict Robinson constraint qualification (SRCQ) is said to hold at $\overline x$ with respect to $\overline y$ if
\begin{equation*}
G'(\overline x)\mX+\mT_K(G(\overline x))\cap \overline y^{\perp}=\mY,
\end{equation*}
{where $y^{\perp}:=\{s\in\mY\ |\ \la s,y\ra=0\}$ for any vector $y\in\mY$.}
\end{definition}
{It follows from \cite[Proposition 4.50]{bonnans2013perturbation} that the set of Lagrange multipliers $M(\overline x)$ is a singleton if the SRCQ hold.}
	
For the feasible point $x$, the critical cone ${\cal C}(x)$ of \eqref{prog:P} at $x$ is defined by
\begin{equation}\label{eq:critical-1}
\mC(x)=\{\dxx\in\mX\ |\ G'(x)\dxx\in \mT_K(G(x)),\ f'(x)\dxx=0\}.
\end{equation} 
If $\overline{x}$ is a stationary point with $\overline{y}\in M(\overline{x})$, then the critical cone ${\cal C}(\overline{x})$ takes the following form:
\begin{equation}\label{eq:critical-2}
\mC(\overline x)=\{\dxx\in\mX\ |\ G'(\overline x)\dxx\in\mT_K(G(\overline x))\cap \overline y^{\perp}\}.
\end{equation} 
The following definition on the constraint nondegeneracy of \eqref{prog:P}, introduced by Robinson \cite{robinson1980strongly}, is stronger than the SRCQ since $\lin(\mT_K(G(\overline x)))\subseteq \mT_K(G(\overline x))\cap \overline y^{\perp}$ (cf. \cite[Proposition 4.73]{bonnans2013perturbation}).
\begin{definition}\label{def:constraint nondegeneracy}
	The constraint nondegeneracy of \eqref{prog:P}  is said to hold at $\overline x$ if
	\begin{equation*}
		G'(\overline x)\mX+\lin(\mT_K(G(\overline x)))=\mY.
	\end{equation*}
\end{definition}

Recall that the inner and outer second order tangent sets (\cite[(3.49) and (3.50)]{bonnans2013perturbation}) to the given closed set $K\in\mathbb{Y}$ in the direction $d\in\mathbb{Y}$ {\tvio are defined}, respectively, by
\begin{equation*}
\mT^{i,2}_K(y,d):=\{w\in \mY\mid \disten(y+td+\frac{1}{2}t^2w,C)=o(t^2),\ t\ge 0\}
\end{equation*}
and
\begin{equation}\label{eq:def-second-order-tangent}
\mT^{2}_K(y,d):=\{w\in \mY\mid \exists\,t^k\downarrow 0,\ \disten(y+t^kd+\frac{1}{2}(t^k)^2w,C)=o((t^k)^2)\}.\nn
\end{equation}
Note that in general, $\mT^{i,2}_K(y,d)\neq\mT^{2}_K(y,d)$ even for {\tvio convex sets $K$} (cf. \cite[Section 3.3]{bonnans2013perturbation}).  However, it follows from \cite[Proposition 3.136]{bonnans2013perturbation} that if $K$ is a $C^2$-cone reducible  \cite[Definition 3.135]{bonnans2013perturbation} convex set (e.g., the polyhedron, second-order cone, positive semidefinite matrix cone and their Cartesian products), {\tvio then equality always holds}. In this case, ${\cal T}^{2}_{K}(y,d)$ will be simply called the second order tangent set to $K$ at $y\in \mathbb{Y}$ in the direction $d\in\mathbb{Y}$.

Suppose that $K$ in \eqref{prog:P} is $C^2$-cone reducible. The following second order sufficient condition is adopted from \cite[(3.276)]{bonnans2013perturbation}.
\begin{definition}\label{def:SOSC}
	Let $\overline{x}\in\mathbb{X}$ be a stationary point of \eqref{prog:P} with a Lagrange multiplier $\overline{y}\in M(\overline x)$. The second order sufficient condition (SOSC) is said to hold at $(\overline x,\overline y)$ if
\begin{align}\label{eq:SOSC-P}
\la \dxx,\nabla^2_{xx} L(\overline x,\overline y)\dxx\ra-\sigma(\overline y,\mathcal{T}^2_{K}(G(\overline x),G'(\overline x)\dxx))>0 \quad \forall\, \dxx\in\mC(\overline x)\backslash\{0\}.
\end{align}
\end{definition}

The following definition of {\tvio  the strong second order sufficient condition} for \eqref{prog:P} is an extension of  \cite[Definition 3.2]{sun2006strong} for {\tvio the NLSDP}.  It is worth noting that when $M(\overline{x})$ is a singleton, both of them are the same.
\begin{definition}\label{def:S-SOSC}
	Let $\overline{x}\in\mathbb{X}$ be a stationary point of \eqref{prog:P} with a Lagrange multiplier $\overline{y}\in M(\overline x)$. The strong second order sufficient condition (S-SOSC) is said to hold at $(\overline x,\overline y)$ if
\begin{equation*}
\la \dxx,\nabla^2_{xx} L(\overline x,\overline y)\dxx\ra-\sigma(\overline y,\mathcal{T}^2_{K}(G(\overline x),G'(\overline x)\dxx))>0\quad \forall\, \dxx\in\app(\overline x,\overline y)\backslash\{0\}, 
\end{equation*}
where $\app(\overline x,\overline y)$ is the {\tvio outer approximation} of the affine hull of the critical cone ${\cal C}(\overline{x})$ of \eqref{prog:P} with respect to $(\overline{x},\overline{y})$, i.e.,
\begin{equation}\label{eq:def-app}
	\app(\overline x,\overline y):=\{\dxx\in\mathbb{X}\mid G'(\overline x)\dxx\in\aff(\mT_K(G(\overline x))\cap \overline y^{\perp})\}.
\end{equation}
\end{definition}

\subsection{Eigenvalue decomposition of symmetric matrices}

In this subsection, we introduce some useful preliminary results on eigenvalue decompositions of real symmetric matrices and the differentiability of the metric projectors over the positive semidefinite matrix cone $\mathbb{S}^n_+$.

Let $A \in\mathbb{S}^n$ be given. We use $\lambda_{1}(A)\ge \lambda_2(A) \ge \ldots \ge \lambda_{n}(A)$ to denote the eigenvalues of $A$ (all real and counting multiplicity) arranging in  nonincreasing order and use $\lambda(A)$ to denote the vector of the ordered eigenvalues of $A$. Let $\Lambda(A):= {\rm Diag}(\lambda(A))$. Consider the eigenvalue decomposition of $A$, i.e., $A={P} \Lambda(A){P}^{T}$, where ${P}\in{\cal O}^{n}$. By considering the index sets of positive, zero, and negative eigenvalues of $A$, we are able to write $A$ in the following form
\begin{equation}\label{eq:eig-de}
	A= \left[\begin{array}{ccc}
		{P}_{\alpha} & {P}_{\beta} & {P}_{\gamma}
	\end{array}\right] \left[\begin{array}{ccc}
		\Lambda(A)_{\alpha\alpha} & 0 & 0 \\ [3pt]
		0 & 0 & 0\\ [3pt]
		0 & 0 & \Lambda(A)_{\gamma\gamma}
	\end{array}\right]\left[\begin{array}{c}
		{P}_{\alpha}^T \\ [3pt]
		{P}_{\beta}^T \\
		{P}_{\gamma}^T
	\end{array}\right],
\end{equation}
where $\alpha$, $\beta$ and $\gamma$ are index sets defined by
\begin{equation}\label{eq:index}
	\alpha:=\{i \mid \lam_i>0\},\quad\beta:=\{i\ |\ \lam_i=0\}\quad\text{and}\quad\gamma:=\{i\ |\ \lam_i<0\}.
\end{equation}
We use ${\cal O}^{n}(A)$ to denote the set of all orthogonal matrices $P\in{\cal O}^{n}$ satisfying \eqref{eq:eig-de}.

The next proposition, which was stated in \cite[Proposition 4.4]{sun2002strong}, illustrates that the eigenvectors of symmetric matrices, though not continuous, are upper Lipschitz continuous. 
\begin{proposition}\label{prop:dis-P}
	Given $A\in\Sbb^n$, for any $H\in\Sbb^n$, let $P$ be an orthogonal matrix such that $X+H=P\Lambda(A+H)P^T$. Then, as $H\to 0$, we have
	\begin{align}\nn
		\disten(P,\mO^n(A))=O(\| H\| ).
	\end{align}
\end{proposition}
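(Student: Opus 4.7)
The plan is to exhibit an explicit element $P_0 \in \mO^n(A)$ with $\|P - P_0\| = O(\|H\|)$. By Weyl's inequality, $|\lambda_i(A+H) - \lambda_i(A)| \le \|H\|$ for every $i$, so once $\|H\|$ is sufficiently small the ordered eigenvalues of $A+H$ cluster around the distinct eigenvalues $\mu_1 > \cdots > \mu_k$ of $A$ (with multiplicities $m_1,\ldots,m_k$), and the inter-cluster gaps of $A+H$ are bounded below by $\delta/2$, where $\delta > 0$ is the minimum spectral gap of $A$ (the trivial case $k=1$ is immediate, as then $\mO^n(A) = \mO^n$). Partition the columns of $P$ accordingly as $P = [P^H_1, \ldots, P^H_k]$ and fix once and for all an orthogonal matrix $[Q_1, \ldots, Q_k] \in \mO^n(A)$ with the same block structure. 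The strategy is to produce, for each $j$, an $m_j \times m_j$ orthogonal matrix $R_j$ with $\|P^H_j - Q_j R_j\| = O(\|H\|)$; then $P_0 := [Q_1 R_1, \ldots, Q_k R_k]$ is orthogonal, satisfies $A P_0 = P_0 \Lambda(A)$ and hence lies in $\mO^n(A)$, while a blockwise Pythagorean identity yields $\|P - P_0\|^2 = \sum_j \|P^H_j - Q_j R_j\|^2 = O(\|H\|^2)$.

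The core step is invariant-subspace perturbation for each cluster. Let $\Pi_j := Q_j Q_j^T$ and $\Pi^H_j := P^H_j (P^H_j)^T$ denote the orthogonal projections onto the $\mu_j$-eigenspace of $A$ and onto the span of the $j$-th cluster of eigenvectors of $A+H$, respectively. Using the Riesz-projector representation
$$\Pi^H_j - \Pi_j = \frac{1}{2\pi i}\oint_{\Gamma_j}\bigl[(zI - A - H)^{-1} - (zI - A)^{-1}\bigr]\,dz,$$
where $\Gamma_j \subset \mathbb{C}$ is a fixed circle of radius $\delta/3$ centered at $\mu_j$ (which isolates the $j$-th cluster of $A+H$ from the rest of the spectrum once $\|H\| < \delta/4$), together with the resolvent identity $(zI - A - H)^{-1} - (zI - A)^{-1} = (zI - A - H)^{-1} H (zI - A)^{-1}$ and the uniform boundedness of both resolvents on $\Gamma_j$, one obtains $\|\Pi^H_j - \Pi_j\| = O(\|H\|)$. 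Equivalently, the Davis--Kahan $\sin\Theta$ theorem delivers the same estimate directly.

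It remains to upgrade the projection estimate into an eigenvector estimate and to assemble the blocks. Set $M_j := Q_j^T P^H_j \in \RR^{m_j \times m_j}$. Combining $\Pi^H_j P^H_j = P^H_j$ with $(P^H_j)^T P^H_j = I$ gives $\|Q_j M_j - P^H_j\| = \|(\Pi_j - \Pi^H_j) P^H_j\| = O(\|H\|)$ and $\|M_j^T M_j - I\| = \|(P^H_j)^T(\Pi_j - \Pi^H_j) P^H_j\| = O(\|H\|)$, so $M_j$ is nonsingular for $\|H\|$ small. Writing the polar decomposition $M_j = R_j S_j$ with $R_j \in \mO^{m_j}$ and $S_j = (M_j^T M_j)^{1/2}$, the bound $\|S_j - I\| = O(\|H\|)$ yields $\|M_j - R_j\| = O(\|H\|)$ and hence $\|P^H_j - Q_j R_j\| \le \|P^H_j - Q_j M_j\| + \|Q_j(M_j - R_j)\| = O(\|H\|)$, which completes the argument. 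The main obstacle is the projector perturbation estimate of the second paragraph; the polar-decomposition clean-up and the blockwise assembly of $P_0$ are then routine.
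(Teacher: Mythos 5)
Your proof is correct. Note that the paper itself gives no proof of this proposition: it is quoted verbatim from the literature (attributed to \cite{sun2002strong}, Proposition 4.4), so there is no in-paper argument to compare against line by line. Your route --- cluster the eigenvalues of $A+H$ around the distinct eigenvalues of $A$ via Weyl's inequality, bound the perturbation of the cluster spectral projectors by $O(\|H\|)$ via Riesz projectors (or Davis--Kahan), and then realign each block $P^H_j$ with a fixed eigenbasis $Q_j$ of $A$ through the polar factor of $M_j = Q_j^T P^H_j$ --- is a complete and standard way to establish the estimate, and all the individual steps check out: $\|Q_jM_j - P^H_j\| = \|(\Pi_j-\Pi^H_j)P^H_j\|$, $\|M_j^TM_j - I\| = \|(P^H_j)^T(\Pi_j - \Pi^H_j)P^H_j\|$, the bound $\|S_j - I\| \le \|S_j^2 - I\|$ from $S_j \succeq 0$, and the observation that $P_0 = [Q_1R_1,\ldots,Q_kR_k]$ indeed lies in $\mO^n(A)$ because $\Lambda(A)$ is a constant multiple of the identity on each block. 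The proof in the cited source reaches the same conclusion by a more elementary device: writing $\widetilde P = Q^TP$ for a fixed $Q\in\mO^n(A)$ and reading off from the eigenvalue equation $(A+H)P = P\Lambda(A+H)$ that $\widetilde P_{ij}\,(\lambda_j(A+H)-\lambda_i(A)) = (Q^THP)_{ij}$, which forces the off-diagonal blocks of $\widetilde P$ (those with $\lambda_i(A)\neq\lambda_j(A)$) to be $O(\|H\|)$ directly, without contour integrals; the subsequent orthogonalization of the diagonal blocks is the same polar-decomposition step you use. Your version trades that elementary computation for the resolvent calculus, which costs nothing in rigor and generalizes more readily, at the price of invoking heavier machinery.
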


Let $\Pi_{\mathbb{S}^n_+}:\mathbb{S}^n\to\mathbb{S}^n$ be the metric projection operator over the positive semidefinite matrix cone $\mathbb{S}^n_+$, i.e.,
\begin{equation}\label{eq:Pi-SDP}
\Pi_{\mathbb{S}^n_+}(A):={\rm arg}\min_{X\in\mathbb{S}^n_+}\left\{\frac{1}{2}\|X-A\|^2\right\},\quad A\in\mathbb{S}^n.
\end{equation}
It is clear that $\Pi_{\mathbb{S}^n_+}(\cdot)$ is globally Lipschtiz continuous. Furthermore, it follows from \cite[Theorem 4.7]{sun2002semismooth} (see also \cite[Proposition 9]{pang2003semismooth}) that $\Pi_{\mathbb{S}^n_+}(\cdot)$ is directionally differentiable at any $A\in\mathbb{S}^n$ and the directional derivative at $A$ along {\tvio the direction} $H\in\mathbb{S}^n$ is given by 
\begin{equation} \label{eq:dd-projection}
	\Pi_{\mathbb{S}_+^n} ^{\prime}(A;H) =  {P} \left[
	\begin{array}{ccc}
		{\widetilde H}_ {\alpha\alpha}   & {\widetilde H}_{\alpha\beta} &\Xi_{\alpha\gamma} \circ {\widetilde H}_{\alpha\gamma}\\
		\widetilde{H}_{\alpha\beta}^T &  \Pi_{{\cal S}_+^{\rvert\beta\lvert} }({\widetilde H}_{\beta\beta}) &0\\
		\Xi_{\alpha\gamma}^T \circ {\widetilde H}_{\alpha\gamma}^T & 0 & 0
	\end{array}
	\right]  {P}^T,
\end{equation}
with $P\in\mathcal{O}^n(A)$, ${\widetilde H}:={P}^T H{P}$  and $\Xi\in\mathbb{S}^n$ is defined by 
\begin{equation}\label{eq:Xi}
	\Xi_{ij}:=\frac{\max\{\lambda_{i}(A),0\}-\max\{\lambda_{j}(A),0\}}{\lambda_{i}(A)-\lambda_{j}(A)},\quad i,j=1,\ldots,n,
\end{equation} where $0/0$ is defined to be $1$. Meanwhile, it is well-known \cite[Proposition 4.5 and Theorem 4.13]{sun2002semismooth} that the metric projection operator $\Pi_{\mathbb{S}^n_+}(\cdot)$ is strongly semismooth everywhere.

The following result on the characterizations of the Bouligand generalized Jacobian and Clarke generalized Jacobian  of $\Pi_{\mathbb{S}_+^n}(\cdot)$ is taken from \cite[Lemma 11]{pang2003semismooth}.

\begin{proposition}\label{prop:dif-B-F}
	Given $A\in\Sbb^n$, {\tvio it holds that} ${\cal V}\in\partial_B\Pi_{\Sbb^n_+}(A)$ ($\partial_C\Pi_{\Sbb^n_+}(A)$) if and only if there is ${\cal W}\in\partial_B\Pi_{\Sbb^{|\beta|}_+}(0)$ ($\partial_C\Pi_{\Sbb^{|\beta|}_+}(0)$) such that for any $H\in \Sbb^n$,
\[
{\cal V}(H)=P
\begin{bmatrix}
\widetilde{H}_{\alpha\alpha}&\widetilde{H}_{\alpha\beta}&\Xi_{\alpha\gamma}\circ\widetilde{H}_{\alpha\gamma} \\
\widetilde{H}_{\beta\alpha}&{\cal W}(\widetilde{H}_{\beta\beta})&0 \\
\Xi_{\gamma\alpha}\circ\widetilde{H}_{\gamma\alpha}&0&0 
\end{bmatrix}
P^T,
\]
where $P\in{\cal O}^n(A)$ and $\widetilde{H}=P^THP$. In particular, since both the zero mapping ${\cal W}\equiv 0$  and the identity mapping ${\cal W}\equiv {\cal I}$ from $\mathbb{S}^{|\beta|}\to\mathbb{S}^{|\beta|}$ are elements of $\partial_B\Pi_{\Sbb^{|\beta|}_+}(0)$, the following two mappings ${\cal V}_0$ and ${\cal V}_{\mI}$ defined by
\begin{equation}\label{eq:exactV0}
	{\cal V}_0(H)=P
	\begin{bmatrix}
		\widetilde{H}_{\alpha\alpha}&\widetilde{H}_{\alpha\beta}&\Xi_{\alpha\gamma}\circ\widetilde{H}_{\alpha\gamma} \\
		\widetilde{H}_{\beta\alpha}&0&0 \\
		\Xi_{\gamma\alpha}\circ\widetilde{H}_{\gamma\alpha}&0&0 
	\end{bmatrix}
	P^T,\quad H\in \Sbb^n
\end{equation}
	and
	\begin{equation}\label{eq:exactV1}
			{\cal V}_{\mI}(H)=P
		\begin{bmatrix}
			\widetilde{H}_{\alpha\alpha}&\widetilde{H}_{\alpha\beta}&\Xi_{\alpha\gamma}\circ\widetilde{H}_{\alpha\gamma} \\
			\widetilde{H}_{\beta\alpha}&\widetilde{H}_{\beta\beta}&0 \\
			\Xi_{\gamma\alpha}\circ\widetilde{H}_{\gamma\alpha}&0&0 
		\end{bmatrix}
		P^T,\quad  H\in \Sbb^n 
	\end{equation}
	 are elements of $\partial_B\Pi_{\Sbb^n_+}(A)$. {Moreover, ${\cal V}_{0}$ and ${\cal V}_{\mI}$ are {\tvio independent of} the choice of $P\in \mathcal{O}^n(A)$.}
\end{proposition}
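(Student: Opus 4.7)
The plan is to reduce $\partial_B\Pi_{\mathbb{S}^n_+}(A)$ to $\partial_B\Pi_{\mathbb{S}^{|\beta|}_+}(0)$ by an eigenspace-decoupling argument at the set of differentiability points of $\Pi_{\mathbb{S}^n_+}$. The starting observation is that $\Pi_{\mathbb{S}^n_+}$ is F-differentiable precisely at nonsingular symmetric matrices, in which case \eqref{eq:dd-projection} with $\beta=\emptyset$ reduces to the classical Jacobian. Consequently, every $\mathcal{V}\in\partial_B\Pi_{\mathbb{S}^n_+}(A)$ can be written as $\mathcal{V}=\lim_k\Pi'_{\mathbb{S}^n_+}(A^k)$ along nonsingular $A^k\to A$, and the Clarke case will follow from $\partial_C=\conv\,\partial_B$ together with the fact that the claimed representation is affine in $\mathcal{W}$.

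\textbf{The equivalence.} For the forward direction I would write $A^k=P^k\Lambda(A^k)(P^k)^T$ and invoke Proposition \ref{prop:dis-P} to pass to a subsequence with $P^k\to P\in\mathcal{O}^n(A)$. For large $k$, the $\alpha$- and $\gamma$-indexed eigenvalues of $A^k$ retain the signs of those of $A$, while the $\beta$-indexed ones are nonzero and split as $\beta=\beta_+^k\cup\beta_-^k$, a partition I may assume fixed after a further extraction as $\beta=\beta_+\cup\beta_-$. Expanding the Jacobian of $\Pi_{\mathbb{S}^n_+}$ at $A^k$ in the fine partition $\{\alpha,\beta_+,\beta_-,\gamma\}$, the entries of $\Xi^k$ outside the $\beta\times\beta$ sub-block converge to the corresponding entries of $\Xi$ by continuity of the eigenvalues, while the remaining entries have subsequential limits. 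The $(\beta,\beta)$ sub-block of the limit then assembles into an operator $\mathcal{W}$, which by construction is itself the limit of $\Pi'_{\mathbb{S}^{|\beta|}_+}(B^k)$ along $B^k:=(P^k_\beta)^T A^k P^k_\beta\to 0$ (nonsingular for large $k$), hence $\mathcal{W}\in\partial_B\Pi_{\mathbb{S}^{|\beta|}_+}(0)$; the off-$\beta$ pieces match the displayed template. Conversely, given $\mathcal{W}=\lim_k\Pi'_{\mathbb{S}^{|\beta|}_+}(B^k)$ with nonsingular $B^k\to 0$, I would lift via $A^k:=A+P_\beta B^k P_\beta^T$, check that $A^k$ is nonsingular for large $k$, and verify by direct substitution that $\Pi'_{\mathbb{S}^n_+}(A^k)\to\mathcal{V}$ in the stated form.

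\textbf{Particular elements, $P$-independence, and main obstacle.} The ``in particular'' statement then follows from the reverse direction by taking $B^k=-\varepsilon_k I_{|\beta|}$ (so $\Pi'_{\mathbb{S}^{|\beta|}_+}(B^k)\equiv 0$, yielding $\mathcal{V}_0$) and $B^k=+\varepsilon_k I_{|\beta|}$ (so $\Pi'_{\mathbb{S}^{|\beta|}_+}(B^k)\equiv\mathcal{I}$, yielding $\mathcal{V}_{\mI}$). For $P$-independence, any two admissible decompositions $P,P'\in\mathcal{O}^n(A)$ are related by $P'=PQ$ with $Q$ block-diagonal respecting the sub-eigenspaces of $A$ (in particular respecting the partition $\{\alpha,\beta,\gamma\}$); a direct computation shows the central matrix in \eqref{eq:exactV0}/\eqref{eq:exactV1} transforms as $Q^T(\cdot)Q$ under $P\mapsto PQ$, because the $(\beta,\beta)$ entry is preserved (the zero entry in $\mathcal{V}_0$ trivially, and the $\widetilde{H}_{\beta\beta}$ entry in $\mathcal{V}_{\mI}$ through $Q_\beta^T\widetilde{H}_{\beta\beta}Q_\beta$ matching the transformed $\widetilde H$), the $(\beta,\gamma)/(\gamma,\beta)$ zero blocks are preserved, the pure $\widetilde{H}$-entries on $(\alpha,\beta)/(\beta,\alpha)$ transform naturally, and on $(\alpha,\gamma)$ the matrix $\Xi_{\alpha\gamma}$ is constant on each sub-eigenspace pair so its Hadamard product commutes with block-diagonal $Q$-conjugation. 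The outer wrapping $P(\cdot)P^T$ then absorbs the $Q^T(\cdot)Q$ transformation, so $\mathcal{V}_0$ and $\mathcal{V}_{\mI}$ depend only on $A$. The main technical obstacle is the forward direction: isolating the $(\beta,\beta)$-block limit and verifying that it is a genuine Bouligand limit of $\Pi'_{\mathbb{S}^{|\beta|}_+}$ at $0$ (rather than some larger limiting object) requires careful synchronization of the sign-partition $\beta_+\cup\beta_-$ with the orthogonal frame $P^k$ and tracking the limits of the $\beta$-indexed entries of $\Xi^k$.
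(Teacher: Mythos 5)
The paper does not prove this proposition itself; it is quoted verbatim from \cite[Lemma 11]{pang2003semismooth}, and your argument is a correct reconstruction of the standard proof given there: differentiability of $\Pi_{\Sbb^n_+}$ exactly at nonsingular matrices, extraction of a convergent frame via Proposition~\ref{prop:dis-P}, the computation that the $\beta\beta$-block of $\Pi'_{\Sbb^n_+}(A^k)$ is precisely $\Pi'_{\Sbb^{|\beta|}_+}(\Lambda(A^k)_{\beta\beta})$ while the off-$\beta$ entries of $\Xi^k$ converge to those of $\Xi$, and the lifting $A^k=A+P_\beta B^kP_\beta^T$ for the converse. The only detail worth adding is in the Clarke case: to take convex combinations of Bouligand elements represented with possibly different $P_i\in\mathcal{O}^n(A)$ inside a single frame, you should note that $\partial_B\Pi_{\Sbb^{|\beta|}_+}(0)$ is invariant under orthogonal conjugation, so each $\mathcal{W}_i$ can be transported to a common $P$ before averaging.
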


\section{Existence of nonsingular elements of the generalized Jacobians}\label{section:nonsingularity}

As mentioned in the introduction, our preliminary objective is to explore the existence of nonsingular elements within the generalized Jacobians of $F$ defined in \eqref{eq:natural-mapping}. This investigation sets the stage for the subsequent development of a semismooth Newton method that achieves quadratic convergence without the BD-regularity. To commence, we introduce the concept of the weak strict Robinson constraint qualification (W-SRCQ), which serves as an extension to the SRCQ condition (Definition \ref{def:SRCQ}).
	
\begin{definition}\label{def:W-SRCQ}
Let $\overline{x}\in\mathbb{X}$ be a stationary point of \eqref{prog:P} with a Lagrange multiplier $\overline{y}\in M(\overline{x})$. The weak strict Robinson constraint qualification (W-SRCQ) is said to hold at $\overline x$ with respect to $\overline{y}$ if
\[
G'(\overline x)\mX+\aff(\mT_K(G(\overline x))\cap \overline{y}^{\perp})=\mY.
\]
\end{definition}

\begin{remark}
	The W-SRCQ condition is referred to as the {\it weak linear independence constraint qualification} (W-LICQ) in the context of nonlinear variational inequalities, as explored in \cite{izmailov2003karush}. Nevertheless, we choose not to adopt the W-LICQ terminology in this context. This is because the W-SRCQ is not simply a weaker form of classical LICQ; rather, it is a distinct concept that arises by incorporating the affine hull into the SRCQ.
\end{remark}

As can be inferred directly from the definitions (Definitions \ref{def:SRCQ} and \ref{def:W-SRCQ}), the W-SRCQ  is weaker than the SRCQ. This implies that the W-SRCQ is also milder than the constraint non-degeneracy condition as outlined in Definition \ref{def:constraint nondegeneracy}. Moreover, the example provided below demonstrates that W-SRCQ is, in fact, even weaker than the RCQ defined by Definition \ref{def:RCQ}.
\begin{example}\label{example:1}
	Consider the following quadratic SDP problem:
	\begin{align}\label{eq:prob_example1}
		\begin{aligned}
			\min\quad        &\frac{1}{2}\| X_{11}\| ^2-\frac{1}{2}\| X_{22}\| ^2	 \\
			\mbox{s.t.}\quad & X_{12}=0, \quad X_{22}=0, \\
			& X\in \Sbb^n_+ , 
		\end{aligned}
	\end{align}
	where $X=\begin{bmatrix} X_{11} &X_{12}\\X_{21}&X_{22}\end{bmatrix}\in \mathbb{S}^n$ with  $X_{11}\in\RR^{l_1\times l_1}$ and $X_{22}\in\RR^{l_2\times l_2}$. It is easy to verify that $\overline{X}=0$ is a local optimal solution with the following Lagrange multiplier set 
	$$ M(\overline{X})=\left\{(\xi_{12},\xi_{22},\Gamma)\in\mathbb{R}^{l_1\times l_2}\times\mathbb{R}^{l_2\times l_2}\times \mathbb{S}^n_+ \mid \begin{bmatrix} 0 &\xi_{12}\\ \xi_{12}^T&\xi_{22} \end{bmatrix}+\Gamma=0\right\}.
	$$ 
	Thus, $(\overline{\xi}_{12},\overline{\xi}_{22},\overline{\Gamma})=(0,0,0)\in M(\overline{X})$. Consider the KKT point $(\overline{X},\overline \xi_{12},\overline\xi_{22},\overline{\Gamma})=(0,0,0,0)$. We know that the index sets defined by \eqref{eq:index} are $\alpha=\gamma=\emptyset$ and $\beta=\{1,\ldots,n\}$. It then can be checked directly by Definition \ref{def:W-SRCQ} that the W-SRCQ holds at $\overline{X}$ with respect to $(\overline{\xi}_{12},\overline{\xi}_{22},\overline{\Gamma})$. Meanwhile, since the multiplier set $M(\overline{X})$ is unbounded, we know that the RCQ does not hold at $\overline{X}$.
\end{example} 

Next, we {\tvio are to introduce the concept} of the weak second order condition for a KKT solution $(\overline{x},\overline{y})$ of \eqref{prog:P}, which is a generalization of the SOSC (Definition \ref{def:SOSC}).
\begin{definition}\label{def:W-SOC}
	Let $\overline{x}\in\mathbb{X}$ be a stationary point of \eqref{prog:P} with a Lagrange multiplier $\overline{y}\in M(\overline{x})$. The weak second order condition (W-SOC) is said to hold at $(\overline{x},\overline{y})$ if
\begin{equation}\label{eq:W-SOC-P}
\la \dxx,\nabla^2_{xx} L(\overline x,\overline y)\dxx\ra-\sigma(\overline y,\mathcal{T}^2_{K}(G(\overline x),G'(\overline x)\dxx))>0 \quad \forall\, \dxx\in\appl(\overline x,\overline y)\backslash\{0\},
\end{equation}
where $\appl(\overline x,\overline y)$ is defined by
\begin{equation}\label{eq:def-appl}
\appl(\overline x,\overline y):=\left\{\dxx\in\mathbb{X}\mid  G'(\overline x)\dxx\in\lin(\mT_K(G(\overline x))\cap \overline y^{\perp})\right\}.
\end{equation}
\end{definition}

\begin{remark}
	The W-SOC is referred to as the {\em second order condition} for nonlinear variational inequalities in \cite{izmailov2003karush}. However, to emphasize Definition \ref{def:W-SOC} is a weaker variant of the SOSC, we prefer the term of W-SOC here. Moreover, it can be seen from the definitions that at a KKT point, the following implications hold:
		\[
		\mbox{S-SOSC}\quad \Longrightarrow \quad \mbox{SOSC}\quad \Longrightarrow \quad \mbox{W-SOC}.
		\]
	Furthermore, it is important to note that the W-SOC does not suffice to ensure local optimality for general optimization problems. Additionally, within the domain of convex problems, the W-SOC alone does not necessarily lead to the isolation of the optimal solution.
\end{remark}

Henceforth, our primary focus will be on the following nonlinear semidefinite programming (NLSDP) problem:
\begin{equation}\label{prog:NLSDP}
	\begin{array}{cl}
		\min & f(x) \\ [3pt]
		\mbox{s.t.} & h(x)=0, \\ [3pt]
		& g(x)\in \Sbb^n_+.
	\end{array}
\end{equation}
It should be noted that the results derived herein can be similarly extended through straightforward generalizations to cases where $K$ in \eqref{prog:P} represents the Cartesian product of a finite number of positive semidefinite cones and zero vectors. Furthermore, considering that $\mathbb{R}^n$ can be represented as the Cartesian product of $n$ one-dimensional positive semidefinite cones $\mathbb{S}^1$, our analysis encompasses {\tvio classical nonlinear programming (NLP)} as well. In addition, we call the NLSDP \eqref{prog:NLSDP} {\tvio a convex semidefinite programming (CSDP) problem} if $f$ is convex, $h$ is affine and $g$ is $\mathbb{S}^n_+$-convex, i.e., for any $x,y\in\mX$ and $t\in(0,1)$,
 $$g(tx + (1 - t)y) - t g(x) - (1 - t)g(y)\in\Sbb^n_+.$$

For the NLSDP \eqref{prog:NLSDP}, the nonsmooth system \eqref{eq:natural-mapping} which is  equivalent to the KKT optimality condition \eqref{KKT:P}, takes the following form:
\begin{equation}\label{eq:SDP-natural-mapping}
F(x,\xi,\Gamma)=\begin{bmatrix}
&\nabla_xL(x,\xi,\Gamma) \\
& h(x) \\
&-g(x)+\Pi_{\Sbb^n_+}(g(x)+\Gamma) 
\end{bmatrix} = 0,
\end{equation}
where the associated Lagrange function is given by $L(x,\xi,\Gamma)=f(x)+\la \xi,h(x)\ra+\la \Gamma,g(x)\ra$ and $\Pi_{\mathbb{S}^n_+}(\cdot)$ is the metric projection operator over $\mathbb{S}^n_+$ defined by \eqref{eq:Pi-SDP}.

Let $\overline x$ be a stationery solution of \eqref{eq:SDP-natural-mapping} with the Lagrange multipliers $(\overline{\xi},\overline{\Gamma})\in M(\overline{x})$. Denote $\overline A=g(\overline{x})+\overline{\Gamma}$. Consider the corresponding eigenvalue decomposition \eqref{eq:eig-de} of $\overline A$ with the index sets $\alpha$, $\beta$ and $\gamma$ defined by \eqref{eq:index}, i.e.,
\begin{equation}\label{eq:eig-decomp-SDP}
A=g(\overline x)+\overline\Gamma =\overline{P}
\left[\begin{array}{ccc}
\overline\Lambda_{\alpha\alpha} & 0 & 0 \\ [3pt]
0 & 0 & 0 \\  [3pt]
0 & 0 & \overline\Lambda_{\gamma\gamma}
\end{array}\right]
\overline{P}^T,
\end{equation}
where $\overline\Lambda=\Lambda(A)=\Diago(\overline\lambda_i)$ with $\overline{\lambda}:=\lambda(A)$ and $\overline{P}\in \mO^n(A)$. Thus, the tangent cone ${\cal T}_{\mathbb{S}^n_+}(g(\overline{x}))$ of $\mathbb{S}^n_+$ defined by \eqref{eq:def-Tangent-g} has the following explicit expression (cf. e.g., \cite[(17)]{sun2006strong}):
\begin{equation}\label{eq:tangent-SDP}
\mT_{\Sbb^n_+}(g(\overline x))=\left\{\overline{P}B\overline{P}^T\in\Sbb^n\mid\begin{bmatrix} B_{\beta\beta}& B_{\beta\gamma}\\B_{\gamma\beta}&B_{\gamma\gamma}\end{bmatrix}\succeq 0\right\}. 
\end{equation}
{\tvio Furthermore, the detailed characterizations of the following sets can be found in \cite{sun2006strong}.}
\begin{eqnarray}
\mT_{\Sbb^n_+}(g(\overline x))\cap \overline\Gamma^{\perp}&=&\left\{\overline{P}B\overline{P}^T\in\Sbb^n\mid B_{\beta\beta}\succeq 0,\ B_{\beta\gamma}=0,\ B_{\gamma\gamma}=0\right\}, \label{eq:critical-cone-SDP} \\ [3pt] 
\lin(\mT_{\Sbb^n_+}(g(\overline x)))&=&\left\{\overline{P}B\overline{P}^T\in\Sbb^n\mid B_{\beta\beta}= 0,\ B_{\beta\gamma}=0,\ B_{\gamma\gamma}=0\right\}, \label{eq:lin-tangent-SDP} \\ [3pt]
\aff(\mT_{\Sbb^n_+}(g(\overline x))\cap \overline\Gamma^{\perp})&=&\left\{\overline{P}B\overline{P}^T\in\Sbb^n\mid B_{\beta\gamma}=0,\ B_{\gamma\gamma}=0\right\}. \label{eq:aff-critical-cone-SDP}
\end{eqnarray}
For the notational simplicity, we further define the linear operator $\overline{\mB}:\mX\to\Sbb^n$ by 
	\begin{equation}\label{eq:def-B-map}
		\overline\mB(\dxx)=\overline{P}^T(g'(\overline x)\dxx)\overline{P},\quad \dxx\in\mathbb{X}.
\end{equation}
Thus, together with \eqref{eq:critical-2}, we know from \eqref{eq:critical-cone-SDP} that the critical cone $\mC(\overline x)$ of the NLSDP \eqref{prog:NLSDP} at the stationary point $\overline x$ takes the following form:
\begin{equation}\label{eq:C-set-SDP}
	\mC(\overline x)=\left\{\dxx\in\mathbb{X}\mid h'(\overline x)\dxx=0,\ \overline\mB(\dxx)_{\beta\beta}\succeq 0,\ \overline\mB(\dxx)_{\beta\gamma}=0,\ \overline\mB(\dxx)_{\gamma\gamma}=0\right\}.
\end{equation}
It follows from \eqref{eq:aff-critical-cone-SDP} and \eqref{eq:lin-tangent-SDP} that for the stationary point $\overline{x}\in\mathbb{X}$ of \eqref{prog:NLSDP} with a Lagrange multiplier $(\overline\xi,\overline\Gamma)\in M(\overline{x})$, we have
\begin{equation}\label{eq:affC} 
	\app(\overline x,\overline\xi,\overline\Gamma)=\left\{\dxx\in\mathbb{X}\mid h'(\overline x)\dxx=0,\ \overline\mB(\dxx)_{\beta\gamma}= 0,\  \overline\mB(\dxx)_{\gamma\gamma}= 0\right\} 
\end{equation}
and
\begin{equation}\label{eq:linC}
	\appl(\overline x,\overline\xi,\overline\Gamma)=\left\{\dxx\in\mathbb{X}\mid h'(\overline x)\dxx=0,\ \overline\mB(\dxx)_{\beta\beta}= 0,\ \overline\mB(\dxx)_{\beta\gamma}= 0,\ \overline\mB(\dxx)_{\gamma\gamma}= 0\right\},
\end{equation}
 where the sets $\app(\overline x,\overline\xi,\overline\Gamma)$ and $\appl(\overline x,\overline\xi,\overline\Gamma)$ are defined by \eqref{eq:def-app} and \eqref{eq:def-appl}, respectively. Moreover, by employing the explicit formulas \eqref{eq:aff-critical-cone-SDP} and \eqref{eq:lin-tangent-SDP}, we obtain the following useful characterizations of the W-SRCQ (Definition \ref{def:W-SRCQ}) and 
{\tvio the constraint nondegeneracy} (Definition \ref{def:constraint nondegeneracy}) for the NLSDP \eqref{prog:NLSDP}. {\tvio We omit the detailed proof here.} 
\begin{lemma}\label{lemma:W-SRCQ-and-CN}
	Let $\overline{x}\in\mathbb{X}$ be a stationary point of \eqref{prog:NLSDP} with a Lagrange multiplier $(\overline\xi,\overline\Gamma)\in M(\overline{x})$. 
	\begin{itemize}
		\item[(i)] The W-SRCQ holds at $\overline x$ with respect to $(\overline\xi,\overline\Gamma)$ if and only if
		\begin{equation}\label{eq:equal:W-SRCQ}
			\left\{
			\begin{array}{l}
				h'(\overline x)^* \dxi+\overline{\mB}^*\dGG=0,\\ [3pt]
				\dGG_{\alpha\alpha}=0,\ \dGG_{\alpha\beta}=0, \  \dGG_{\alpha\gamma}=0, \  \dGG_{\beta\beta}=0
			\end{array}
			\right.
			\quad\Longrightarrow\quad
			(\dxi,\dGG)=0.
		\end{equation}
		\item[(ii)] The constraint nondegeneracy holds at $\overline{x}$ if and only if
		\begin{equation}\label{eq:equal:CNCQ}
			\left\{
			\begin{array}{l}
				h'(\overline x)^* \dxi+\overline{\mB}^*\dGG=0,\\ [3pt]
				\dGG_{\alpha\alpha}=0,\ \dGG_{\alpha\beta}=0,\ \dGG_{\alpha\gamma}=0 
			\end{array}
			\right.
			\quad\Longrightarrow\quad
			(\dxi,\dGG)=0.
		\end{equation}
	\end{itemize}
\end{lemma}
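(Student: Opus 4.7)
The plan is to reduce both statements to a standard subspace-duality argument and then unpack the block structure induced by the eigendecomposition \eqref{eq:eig-decomp-SDP}. Both the W-SRCQ (Definition \ref{def:W-SRCQ}) and the constraint nondegeneracy (Definition \ref{def:constraint nondegeneracy}) assert that the sum of two linear subspaces of $\mY=\RR^{m}\times\Sbb^{n}$ equals $\mY$ (the affine-hull subspace is indeed linear, since it is the affine hull of a convex cone containing the origin). In finite dimensions, such an equality is equivalent, via $(U+V)^{\perp}=U^{\perp}\cap V^{\perp}$, to the intersection of the orthogonal complements being $\{0\}$. The remaining work is therefore to compute these two complements explicitly in the $\overline{P}$-basis.

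First, I would compute $(G'(\overline{x})\mX)^{\perp}$. Since $G=(h,g)$, the adjoint acts by $G'(\overline{x})^{*}(\dxi,\widetilde{\dGG})=h'(\overline{x})^{*}\dxi+g'(\overline{x})^{*}\widetilde{\dGG}$, and the complement is its null space. Reparameterizing the matrix multiplier direction in the $\overline{P}$-basis through $\widetilde{\dGG}=\overline{P}\,\dGG\,\overline{P}^{T}$ (an orthogonal, hence isometric, change of variables on $\Sbb^{n}$) and using the cyclicity of the trace turns $g'(\overline{x})^{*}\widetilde{\dGG}$ into $\overline{\mB}^{*}\dGG$, with $\overline{\mB}$ as in \eqref{eq:def-B-map}. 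The kernel equation becomes $h'(\overline{x})^{*}\dxi+\overline{\mB}^{*}\dGG=0$, which is exactly the first relation in both \eqref{eq:equal:W-SRCQ} and \eqref{eq:equal:CNCQ}.

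Next, I would compute the complement of the second subspace. Since $K=\{0\}\times\Sbb^{n}_{+}$ is a product, $\mT_{K}(G(\overline{x}))\cap\overline{y}^{\perp}=\{0\}\times(\mT_{\Sbb^{n}_{+}}(g(\overline{x}))\cap\overline{\Gamma}^{\perp})$ and both $\aff$ and $\lin$ distribute across the product, so the orthogonal complement in $\mY$ leaves $\dxi$ unconstrained and restricts only $\dGG$ to the complement inside $\Sbb^{n}$. Invoking the isometry $\langle\overline{P}B\overline{P}^{T},\overline{P}D\overline{P}^{T}\rangle=\langle B,D\rangle$ together with the explicit formulas \eqref{eq:aff-critical-cone-SDP} and \eqref{eq:lin-tangent-SDP}, and partitioning the Frobenius inner product by the $(\alpha,\beta,\gamma)$-blocks, I read off the complementary vanishing patterns: the complement of the affine hull is $\{\dGG\mid\dGG_{\alpha\alpha}=\dGG_{\alpha\beta}=\dGG_{\alpha\gamma}=\dGG_{\beta\beta}=0\}$, while the complement of the linearity space drops the $\dGG_{\beta\beta}=0$ requirement. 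Conjoining each pattern with the kernel equation yields \eqref{eq:equal:W-SRCQ} in part (i) and \eqref{eq:equal:CNCQ} in part (ii).

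The one genuine bookkeeping obstacle is precisely this block-orthogonality step: given a subspace cut out by the vanishing of a prescribed set of $(\alpha,\beta,\gamma)$-blocks, one must correctly identify the complementary vanishing-block pattern while honoring the symmetry constraint, so that, for instance, forcing $\dGG_{\alpha\beta}=0$ automatically forces $\dGG_{\beta\alpha}=0$, and similarly for $\dGG_{\alpha\gamma}$ versus $\dGG_{\gamma\alpha}$. Once this block dualization is done carefully, the rest of the argument is mechanical and relies only on three standard ingredients: the product structure of $K$, the subspace-sum duality $(U+V)^{\perp}=U^{\perp}\cap V^{\perp}$, and the orthogonal change of basis already packaged inside $\overline{\mB}$.
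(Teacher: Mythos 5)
Your proposal is correct: the reduction of both conditions to $U+V=\mY\iff U^{\perp}\cap V^{\perp}=\{0\}$, the identification $\overline{\mB}^{*}\dGG=g'(\overline x)^{*}(\overline P\dGG\overline P^{T})$, and the block-wise dualization of \eqref{eq:aff-critical-cone-SDP} and \eqref{eq:lin-tangent-SDP} (with the symmetry pairing of the off-diagonal blocks handled as you describe) give exactly \eqref{eq:equal:W-SRCQ} and \eqref{eq:equal:CNCQ}. The paper omits its proof of this lemma as routine, and your argument is precisely the standard derivation it has in mind (cf.\ the analogous characterizations in \cite{sun2006strong,chan2008constraint}).
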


For the stationary point $\overline{x}\in\mathbb{X}$ of the NLSDP \eqref{prog:NLSDP} with a Lagrange multiplier $(\overline{\xi},\overline{\Gamma})\in M(\overline{x})$, the support function of the second order tangent set in {\tvio the second order conditions} (Definitions \ref{def:SOSC}, \ref{def:S-SOSC} and \ref{def:W-SOC}) has the following explicit characterization \cite[Lemma 3.1]{sun2006strong}:
\begin{equation}\label{eq:sigma}
\sigma(\overline y,\mathcal{T}^2_{K}(G(\overline x),G'(\overline x)\dxx))
=\sigma(\overline\Gamma,\mathcal{T}^2_{\Sbb^n_+}(g(\overline x),g'(\overline x)\dxx))
=2\sum_{i\in\alpha}\sum_{j\in \gamma}\frac{\overline\lambda_j}{\overline\lambda_i}\overline\mB(\dxx)_{ij}^2,\quad \dxx\in \mathbb{X},
\end{equation}
where $K=\{0\}\times\mathbb{S}^n_+$, $G(x)=(h(x),g(x))$ and $\overline{y}=(\overline{\xi},\overline{\Gamma})$.

Define $\mZ:=\mX\times\RR^m\times\Sbb^n$. Let $(\overline{x},\overline{\xi},\overline{\Gamma})$ be a KKT solution of the NLSDP \eqref{prog:NLSDP}. Consider the  Bouligand generalized Jacobian $\partial_BF(\overline x,\overline\xi,\overline\Gamma)$ and Clarke generalized Jacobian $\partial_CF(\overline x,\overline\xi,\overline\Gamma)$ of the KKT nonsmooth mapping $F$ given by \eqref{eq:SDP-natural-mapping}. It follows from \cite[Lemma 2.1]{sun2006strong} (see also \cite[Lemma 1]{chan2008constraint}) that $\overline{\cal U}\in \partial_BF(\overline x,\overline\xi,\overline\Gamma)$ ($\partial_CF(\overline x,\overline\xi,\overline\Gamma)$) if and only if there exists $\overline{\cal V}\in \partial_B\Pi_{\Sbb^m_+}(g(\overline x)+\overline\Gamma)$ ($\partial_C\Pi_{\Sbb^m_+}(g(\overline x)+\overline\Gamma)$) such that for any $(\dxx,\dxi,\dGG)\in \mZ$,
\begin{equation}\label{eq:UandV}
\overline{\cal U}(\dxx,\dxi,\dGG)=
\begin{bmatrix}
\nabla_{xx}^2L(\overline x,\overline \xi,\overline\Gamma)\dxx+h'(\overline x)^*\dxi +g'(\overline x)^*\dGG \\
h'(\overline x)\dxx \\
-g'(\overline x)\dxx+\overline{\cal V}(g'(\overline x)\dxx+\dGG) 
\end{bmatrix}.
\end{equation} 
In particular, let $\overline{\cal V}_{0}$ and $\overline{\cal V}_{\mI}$ be two mappings defined by \eqref{eq:exactV0} and \eqref{eq:exactV1} for $A=g(\overline x)+\overline\Gamma$, respectively. Define the following two elements $\overline{\cal U}_{0}$ and $\overline{\cal U}_{\mI}$ in $\partial_BF(\overline x,\overline\xi,\overline\Gamma)$ with  $\overline{\cal V}_{0}$ and $\overline{\cal V}_{\mI}$ by \eqref{eq:UandV}, respectively, i.e., for any $(\dxx,\dxi,\dGG)\in \mZ$,
\begin{equation}\label{eq:U0}
	\overline{\cal U}_{0}(\dxx,\dxi,\dGG)=
	\begin{bmatrix}
		\nabla_{xx}^2L(\overline x,\overline \xi,\overline\Gamma)\dxx+h'(\overline x)^*\dxi +g'(\overline x)^*\dGG \\
		h'(\overline x)\dxx \\
		-g'(\overline x)\dxx+\overline{\cal V}_0(g'(\overline x)\dxx+\dGG)
	\end{bmatrix}
\end{equation}  
and
\begin{equation}\label{eq:U1}
	\overline{\cal U}_{\cal I}(\dxx,\dxi,\dGG)=
	\begin{bmatrix}
		\nabla_{xx}^2L(\overline x,\overline \xi,\overline\Gamma)\dxx+h'(\overline x)^*\dxi +g'(\overline x)^*\dGG \\
		h'(\overline x)\dxx \\
		-g'(\overline x)\dxx+\overline{\cal V}_{\mI}(g'(\overline x)\dxx+\dGG)
	\end{bmatrix}.
\end{equation}
In the following two subsections, we {\tvio will study the existence} of {\tvio nonsingular elements} in generalized Jacobians at a KKT point $(\overline x,\overline \xi,\overline\Gamma)$.  

\subsection{Sufficient conditions for the existence of {\tvio nonsingular elements} in generalized Jacobians}

In this subsection, we study the sufficient conditions to ensure the existence of {\tvio nonsingular elements} in generalized Jacobians of the KKT nonsmooth mapping $F$ \eqref{eq:SDP-natural-mapping}. The following results illustrate that under the W-SOC and constraint nondegeneracy (or the S-SOSC and W-SRCQ), the mapping $\overline{\cal U}_{0}$ (or $\overline{\cal U}_{\mI}$) defined in \eqref{eq:U0} (or \eqref{eq:U1}) is indeed a nonsingular element of the  Bouligand generalized Jacobian of $F$. 
\begin{theorem}\label{theorem:bar-N-SDP}
 Let $\overline{x}\in\mathbb{X}$ be a stationary point of \eqref{prog:NLSDP} with a Lagrange multiplier $(\overline\xi,\overline\Gamma)\in M(\overline{x})$.
	\begin{itemize}
		\item[(i)] If the W-SOC and constraint nondegeneracy  hold at $(\overline x,\overline \xi,\overline\Gamma)$, then $\overline{\cal U}_{0}\in \partial_B F(\overline x,\overline \xi,\overline\Gamma)$ in \eqref{eq:U0} is nonsingular.
		
		\item[(ii)] If the S-SOSC and W-SRCQ hold at $(\overline x,\overline \xi,\overline\Gamma)$, then $\overline{\cal U}_{\mI}\in \partial_B F(\overline x,\overline \xi,\overline\Gamma)$ in \eqref{eq:U1} is nonsingular.
	\end{itemize}
\end{theorem}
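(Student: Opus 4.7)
The plan is a direct nonsingularity argument: assume $\overline{\cal U}_0(\dxx,\dxi,\dGG)=0$ for part (i) (resp.\ $\overline{\cal U}_{\cal I}(\dxx,\dxi,\dGG)=0$ for part (ii)) and show $(\dxx,\dxi,\dGG)=0$ by combining block-structural information from the third equation with the second order condition applied to the first equation.

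First I would write the third coordinate of the equation in the basis $\overline P\in\mathcal{O}^n(g(\overline x)+\overline\Gamma)$. Setting $\widetilde{\dGG}=\overline P^T\dGG\,\overline P$ and using the explicit block form of $\overline{\cal V}_0$ in \eqref{eq:exactV0}, the relation $-g'(\overline x)\dxx+\overline{\cal V}_0(g'(\overline x)\dxx+\dGG)=0$ splits into blocks indexed by $\alpha,\beta,\gamma$. The $\beta\beta$, $\beta\gamma$, $\gamma\gamma$ blocks collapse to $\overline{\cal B}(\dxx)_{\beta\beta}=0$, $\overline{\cal B}(\dxx)_{\beta\gamma}=0$, $\overline{\cal B}(\dxx)_{\gamma\gamma}=0$; the $\alpha\alpha$ and $\alpha\beta$ blocks yield $\widetilde{\dGG}_{\alpha\alpha}=0$, $\widetilde{\dGG}_{\alpha\beta}=0$; the $\alpha\gamma$ block together with the formula \eqref{eq:Xi} for $\Xi_{ij}$ (where $i\in\alpha$, $j\in\gamma$, so $\lambda_i>0>\lambda_j$) gives the key identity $\widetilde{\dGG}_{ij}=-(\overline\lambda_j/\overline\lambda_i)\overline{\cal B}(\dxx)_{ij}$. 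Combined with $h'(\overline x)\dxx=0$ from the second coordinate, the characterization \eqref{eq:linC} shows $\dxx\in\appl(\overline x,\overline\xi,\overline\Gamma)$. For part (ii) the same accounting on $\overline{\cal V}_{\cal I}$ in \eqref{eq:exactV1} differs only in the $\beta\beta$ block: this now yields $\widetilde{\dGG}_{\beta\beta}=0$ instead of $\overline{\cal B}(\dxx)_{\beta\beta}=0$, so by \eqref{eq:affC} one concludes $\dxx\in\app(\overline x,\overline\xi,\overline\Gamma)$.

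Next I would take the inner product of the first coordinate $\nabla_{xx}^2L(\overline x,\overline\xi,\overline\Gamma)\dxx+h'(\overline x)^*\dxi+g'(\overline x)^*\dGG=0$ with $\dxx$. The $\dxi$-term vanishes by $h'(\overline x)\dxx=0$, and $\langle g'(\overline x)\dxx,\dGG\rangle=\langle \overline{\cal B}(\dxx),\widetilde{\dGG}\rangle$ reduces (after striking out the blocks that are forced to vanish on either factor) to the $\alpha\gamma,\gamma\alpha$ contribution only; inserting $\widetilde{\dGG}_{ij}=-(\overline\lambda_j/\overline\lambda_i)\overline{\cal B}(\dxx)_{ij}$ and using symmetry gives
\[
\langle g'(\overline x)\dxx,\dGG\rangle=-2\sum_{i\in\alpha,j\in\gamma}\frac{\overline\lambda_j}{\overline\lambda_i}\overline{\cal B}(\dxx)_{ij}^2,
\]
which by \eqref{eq:sigma} equals $-\sigma(\overline\Gamma,\mathcal{T}^2_{\Sbb^n_+}(g(\overline x),g'(\overline x)\dxx))$. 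Thus
\[
\langle \dxx,\nabla_{xx}^2L(\overline x,\overline\xi,\overline\Gamma)\dxx\rangle-\sigma(\overline\Gamma,\mathcal{T}^2_{\Sbb^n_+}(g(\overline x),g'(\overline x)\dxx))=0.
\]
In case (i) the W-SOC \eqref{eq:W-SOC-P} applied on $\appl$ forces $\dxx=0$; in case (ii) the S-SOSC applied on $\app$ forces $\dxx=0$.

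Finally, with $\dxx=0$ the first equation collapses to $h'(\overline x)^*\dxi+g'(\overline x)^*\dGG=0$, i.e.\ $h'(\overline x)^*\dxi+\overline{\cal B}^*\widetilde{\dGG}=0$. In case (i) we already have $\widetilde{\dGG}_{\alpha\alpha}=\widetilde{\dGG}_{\alpha\beta}=0$, and $\widetilde{\dGG}_{\alpha\gamma}=-(\overline\lambda_j/\overline\lambda_i)\overline{\cal B}(\dxx)_{\alpha\gamma}=0$, so the hypothesis of \eqref{eq:equal:CNCQ} is met and constraint nondegeneracy yields $(\dxi,\dGG)=0$. In case (ii) we additionally have $\widetilde{\dGG}_{\beta\beta}=0$ from the block analysis, so the hypothesis of \eqref{eq:equal:W-SRCQ} is met and W-SRCQ yields $(\dxi,\dGG)=0$. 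I expect the main obstacle to be the careful block-by-block accounting in the $\alpha\gamma$ block, in particular verifying that the factor $1-\Xi_{ij}=-\overline\lambda_j/(\overline\lambda_i-\overline\lambda_j)$ combines with $\Xi_{ij}=\overline\lambda_i/(\overline\lambda_i-\overline\lambda_j)$ to produce exactly the ratio $-\overline\lambda_j/\overline\lambda_i$ needed to match the support-function expression \eqref{eq:sigma}; the rest of the argument is routine block algebra plus invocation of Lemma \ref{lemma:W-SRCQ-and-CN}.
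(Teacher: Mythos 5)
Your proposal is correct and follows essentially the same route as the paper's proof: block decomposition of the third equation via Proposition \ref{prop:dif-B-F}, the inner-product identity $\langle \dxx,\nabla^2_{xx}L\,\dxx\rangle-\sigma(\overline\Gamma,\mathcal{T}^2_{\Sbb^n_+}(g(\overline x),g'(\overline x)\dxx))=0$ matched against \eqref{eq:sigma} to force $\dxx=0$ via the second order condition, and then Lemma \ref{lemma:W-SRCQ-and-CN} to kill $(\dxi,\dGG)$. Your explicit sign check on the $\alpha\gamma$ block ($\Xi_{ij}=\overline\lambda_i/(\overline\lambda_i-\overline\lambda_j)$, hence $\widetilde{\dGG}_{ij}=-(\overline\lambda_j/\overline\lambda_i)\overline{\mB}(\dxx)_{ij}$) is exactly the computation the paper performs, and your part (ii) details correctly fill in what the paper dismisses as "a similar argument."
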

\begin{proof}
	Let $A=g(\overline{x})+\overline{\Gamma}$ satisfy the eigenvalue decomposition \eqref{eq:eig-decomp-SDP} and $\overline{P}\in{\cal O}^n(A)$. We only show (i) holds, as (ii) can be established through a similar argument.

		Suppose that $\overline{\cal U}_{0}\in \partial_B F(\overline x,\overline \xi,\overline\Gamma)$ is singular, i.e., there is $0\neq (\dxx,\dxi,\dGG)\in\mZ$ such that $\overline{\cal U}_{0}(\dxx,\dxi,\dGG)=0$. It then follows from \eqref{eq:U0} and Proposition \ref{prop:dif-B-F} that
	\begin{eqnarray}
		\nabla_{xx}^2L(\overline x,\overline \xi,\overline\Gamma)\dxx+h'(\overline x)^*\dxi+g'(\overline x)^*\dGG=0,&\label{eq:Vz1-Vzero}\\
		h'(\overline x)\dxx =0,&  \label{eq:h-part} \\
		\widetilde{\Delta\Gamma}_{\alpha\alpha}=0,\quad \widetilde{\Delta \Gamma}_{\alpha\beta}=0,&
		\label{theo:fo-LI-1}\\
		\overline\mB(\dxx)_{\beta\beta}=0,\quad \overline\mB(\dxx)_{\beta\gamma}=0,\quad \overline\mB(\dxx)_{\gamma\gamma}=0,& \label{theo:fo-linNLCQ-1}\\
		(E_{\alpha\gamma}-\overline \Xi_{\alpha\gamma} )\circ \overline\mB(\dxx)_{\alpha\gamma}-\overline \Xi_{\alpha\gamma} \circ \widetilde{\Delta\Gamma}_{\alpha\gamma}=0, & \label{theo:fo-U-1}
	\end{eqnarray}
	where $\widetilde{\Delta\Gamma}:= \overline{P}^T\Delta\Gamma\overline{P}$, the linear operator $\overline{\cal B}:\mathbb{X}\to \mathbb{S}^n$ is defined by \eqref{eq:def-B-map}, the matrix $\overline\Xi\in \mathbb{S}^n$ is given by \eqref{eq:Xi} for $A$ and $E\in\Sbb^n$ denotes the matrix whose elements are all ones. Then, by \eqref{eq:Vz1-Vzero}-\eqref{theo:fo-U-1} and \eqref{eq:sigma}, we obtain that
\begin{align}
	0&=\left\la\dxx ,\nabla_{xx}^2L(\overline x,\overline \xi,\overline\Gamma) \dxx +h'(\overline x) ^*\dxi +g'(\overline x)^*\dGG \right\ra \nn\\
	&=\left\la\dxx ,\nabla_{xx}^2L(\overline x,\overline \xi,\overline\Gamma) \dxx \right\ra+\left\la h'(\overline x) \dxx ,\dxi \right\ra+\left\la\overline\mB(\dxx), \widetilde{\Delta \Gamma}\right\ra \nn\\
	&=\la\dxx ,\nabla_{xx}^2L(\overline x,\overline \xi,\overline\Gamma) \dxx \ra+2\sum_{i\in\alpha}\sum_{j\in \gamma}\frac{-\overline\lambda_j}{\overline\lambda_i}(\overline\mB(\dxx)_{ij})^2  \nn\\
	&=\la\dxx ,\nabla_{xx}^2L(\overline x,\overline \xi,\overline\Gamma) \dxx \ra- \sigma(\overline\Gamma,\mathcal{T}^2_{\Sbb^n_+}(g(\overline x),g'(\overline x)\dxx)). \label{eq:cont-SOSC-bar}
\end{align}
Meanwhile, by \eqref{theo:fo-linNLCQ-1} and \eqref{eq:h-part}, we know from \eqref{eq:linC} that $\dxx\in\appl(\overline x,\overline \xi,\overline\Gamma)$. Since the W-SOC holds at $(\overline x,\overline \xi,\overline\Gamma)$, we know from \eqref{eq:cont-SOSC-bar} that $\dxx=0$. It then follows from  \eqref{theo:fo-U-1}  that $\widetilde{\Delta\Gamma}_{\alpha\gamma}=0$. This, together with \eqref{theo:fo-LI-1} and \eqref{eq:Vz1-Vzero}, yields that
\begin{equation*} 
\left\{
\begin{array}{l}
h'(\overline x)^*\dxi+\overline\mB^*\widetilde{\Delta\Gamma}=0,\\
\widetilde{\Delta\Gamma}_{\alpha\alpha}=0,\quad \widetilde{\Delta \Gamma}_{\alpha\beta}=0,\quad \widetilde{\Delta\Gamma}_{\alpha\gamma}=0.
\end{array}
\right.
\end{equation*}
Since the constraint nondegeneracy holds at $(\overline x,\overline \xi,\overline\Gamma)$, we know from \eqref{eq:equal:CNCQ} in Lemma \ref{lemma:W-SRCQ-and-CN} (ii) that  $(\dxi,\Delta\Gamma)=0$, which contradicts with $(\dxx,\dxi,\dGG)\neq 0$. The proof is then completed. $\hfill \Box$
\end{proof}

\begin{remark}
It should be noted that Theorem \ref{theorem:bar-N-SDP} can be easily extended to cases where $K$ in \eqref{prog:P} is a Cartesian product of a finite number of positive semidefinite cones and zero vectors. By regarding a polyhedron as a Cartesian product of a finite number of one-dimensional positive semidefinite cones and zero vectors, the results derived in Theorem \ref{theorem:bar-N-SDP} are consistent with the findings presented in \cite[P644]{izmailov2003karush}.
\end{remark}

It is natural to question whether the W-SRCQ and  W-SOC are sufficient for the existence of nonsingular elements in the Bouligand generalized Jacobian $\partial_B F(\overline x,\overline \xi,\overline\Gamma)$. We address this question with the following proposition, which confirms the sufficiency under the condition that $|\beta|\le 1$, i.e., the number of zero eigenvalues of $g(\overline x)+\overline\Gamma$ is less than or equal to one.

\begin{proposition}\label{prop:beta<=1}
	Let $\overline{x}\in\mathbb{X}$ be a stationary point of \eqref{prog:NLSDP} with a Lagrange multiplier $(\overline\xi,\overline\Gamma)\in M(\overline{x})$. Suppose that $|\beta|\le 1$. If the W-SOC and W-SRCQ hold at $(\overline x,\overline \xi,\overline\Gamma)$, then at least one of $\{\overline{\cal U}_{0},\overline{\cal U}_{\mI}\}$ is nonsingular.
\end{proposition}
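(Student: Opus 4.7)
The plan is to argue by contradiction for the substantive case $|\beta|=1$ and to invoke Theorem \ref{theorem:bar-N-SDP}(i) directly for the trivial case $|\beta|=0$. When $|\beta|=0$, every $\beta$-related condition in Lemma \ref{lemma:W-SRCQ-and-CN} and in $\appl(\overline x,\overline\xi,\overline\Gamma)$ is vacuous, so the W-SRCQ coincides with the constraint nondegeneracy and Theorem \ref{theorem:bar-N-SDP}(i) immediately yields the nonsingularity of $\overline{\cal U}_{0}$. The remainder of the argument therefore focuses on $|\beta|=1$.

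Assuming for contradiction that both $\overline{\cal U}_{0}$ and $\overline{\cal U}_{\mI}$ are singular, I would first analyze a nonzero $(\dxx,\dxi,\dGG)\in\ker\overline{\cal U}_{0}$ along the lines of the proof of Theorem \ref{theorem:bar-N-SDP}(i). The block decomposition supplied by Proposition \ref{prop:dif-B-F} yields $\widetilde{\Delta\Gamma}_{\alpha\alpha}=\widetilde{\Delta\Gamma}_{\alpha\beta}=0$ and $\overline\mB(\dxx)_{\beta\beta}=\overline\mB(\dxx)_{\beta\gamma}=\overline\mB(\dxx)_{\gamma\gamma}=0$, placing $\dxx$ in $\appl(\overline x,\overline\xi,\overline\Gamma)$. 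Pairing the first-block KKT identity with $\dxx$ and invoking the W-SOC forces $\dxx=0$; the $\alpha\gamma$ equation then delivers $\widetilde{\Delta\Gamma}_{\alpha\gamma}=0$. Were the scalar $c:=\widetilde{\Delta\Gamma}_{\beta\beta}$ zero, Lemma \ref{lemma:W-SRCQ-and-CN}(i) would force $(\dxi,\dGG)=0$, contradicting the nontriviality of the kernel element; hence $c\neq 0$.

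The crux is to use this kernel element $(0,\dxi,\dGG)$ as a test direction against a nonzero $(\dxx',\dxi',\dGG')\in\ker\overline{\cal U}_{\mI}$. The analogous block analysis applied to $\overline{\cal U}_{\mI}$ yields $\widetilde{\Delta\Gamma}'_{\alpha\alpha}=\widetilde{\Delta\Gamma}'_{\alpha\beta}=\widetilde{\Delta\Gamma}'_{\beta\beta}=0$ and $\overline\mB(\dxx')_{\beta\gamma}=\overline\mB(\dxx')_{\gamma\gamma}=0$. Pairing the identity $h'(\overline x)^{*}\dxi+g'(\overline x)^{*}\dGG=0$ against $\dxx'$ and expanding the resulting inner product via $\overline\mB$ and $\widetilde{\Delta\Gamma}$, every block of $\la\overline\mB(\dxx'),\widetilde{\Delta\Gamma}\ra$ collapses thanks to the complementary vanishing of the two kernel elements, leaving only the $\beta\beta$ contribution $\overline\mB(\dxx')_{\beta\beta}\cdot c=0$. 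Since $|\beta|=1$ makes both factors scalars and $c\neq 0$, this forces $\overline\mB(\dxx')_{\beta\beta}=0$. Consequently $\dxx'\in\appl(\overline x,\overline\xi,\overline\Gamma)$; the W-SOC then yields $\dxx'=0$, the $\alpha\gamma$ equation gives $\widetilde{\Delta\Gamma}'_{\alpha\gamma}=0$, and Lemma \ref{lemma:W-SRCQ-and-CN}(i) finally forces $(\dxi',\dGG')=0$, the desired contradiction.

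The main obstruction, and the precise reason the hypothesis $|\beta|\le 1$ is essential in this approach, is the passage from $\la\overline\mB(\dxx')_{\beta\beta},\widetilde{\Delta\Gamma}_{\beta\beta}\ra=0$ to $\overline\mB(\dxx')_{\beta\beta}=0$: this is immediate when both sides are scalars, but it fails when $|\beta|\ge 2$, where a nonzero symmetric matrix can be trace-orthogonal to another nonzero symmetric matrix. This gap is precisely the phenomenon that Example \ref{example:non-SRCQ} is designed to exploit, showing that the W-SOC together with the W-SRCQ alone is insufficient in general in the NLSDP setting.
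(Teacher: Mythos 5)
Your proof is correct and rests on the same key mechanism as the paper's: when $|\beta|=1$ the $\beta\beta$-blocks are scalars, so the orthogonality $\la\overline{\mB}(\dxx')_{\beta\beta},\widetilde{\Delta\Gamma}_{\beta\beta}\ra=0$ against a dual direction with nonzero $\beta\beta$-component forces $\overline{\mB}(\dxx')_{\beta\beta}=0$, which is exactly where $|\beta|\le 1$ is used. The only difference is packaging: the paper splits on whether constraint nondegeneracy holds and, when it fails, uses the failure witness to show $\app(\overline x,\overline\xi,\overline\Gamma)=\appl(\overline x,\overline\xi,\overline\Gamma)$ so that Theorem \ref{theorem:bar-N-SDP}(ii) applies directly, whereas you run the equivalent contrapositive argument with explicit kernel elements of $\overline{\cal U}_{0}$ and $\overline{\cal U}_{\mI}$ --- your kernel element of $\overline{\cal U}_{0}$, after the W-SOC kills $\dxx$, is precisely the paper's witness of the failure of constraint nondegeneracy with $\widetilde{\Delta\Gamma}_{\beta\beta}\neq 0$.
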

\begin{proof} 
	Consider the following two cases.
	
	\noindent{\bf Case 1}: $|\beta|=0$. In this case, from Lemma \ref{lemma:W-SRCQ-and-CN}, we know that the W-SRCQ is equivalent to the constraint nondegeneracy. Moreover, \eqref{eq:affC} and \eqref{eq:linC} imply that $\app(\overline x, \overline \xi, \overline\Gamma) = \appl(\overline x, \overline \xi, \overline\Gamma)$, and thus further imply that the W-SOC is equivalent to the S-SOSC.
	We also have from Proposition \ref{prop:dif-B-F} that $\partial_B F(\overline x,\overline \xi,\overline\Gamma)=\{ F'(\overline x,\overline \xi,\overline\Gamma)\}=\{\overline {\cal U}_{0}\}=\{\overline {\cal U}_{\mI}\}$. The desired result follows directly from Theorem \ref{theorem:bar-N-SDP}.
	
	\noindent{\bf Case 2}: $|\beta|=1$. Since the W-SOC holds at $(\overline x,\overline \xi,\overline\Gamma)$, we know from (i) of Theorem \ref{theorem:bar-N-SDP} that if the {\tvio constraint nondegeneracy holds} at $(\overline x,\overline \xi,\overline\Gamma)$, then $\overline{\cal U}_{0}$ is nonsingular. Suppose that the constraint nondegeneracy does not hold at $(\overline x,\overline \xi,\overline\Gamma)$. Since the W-SRCQ holds at $(\overline x,\overline \xi,\overline\Gamma)$, we know from Lemma \ref{lemma:W-SRCQ-and-CN} that there exists $(\dxi,\dGG) \neq 0$ such that
	\begin{equation*}
		\left\{
		\begin{array}{l}
			h'(\overline x)^* \dxi+\overline{\mB}^*\dGG=0,\\ [3pt]
			\dGG_{\alpha\alpha}=0,\ \dGG_{\alpha\beta}=0,\ \dGG_{\alpha\gamma}=0,  \\
			\dGG_{\beta\beta}\neq 0.
		\end{array} 
		\right.
	\end{equation*} 
	Next, we show that  $\app(\overline x,\overline\xi,\overline\Gamma)=\appl(\overline x,\overline\xi,\overline\Gamma)$ with their explicit expressions given in \eqref{eq:affC} and \eqref{eq:linC}, respectively.  Clearly, $\appl(\overline x,\overline \xi,\overline\Gamma)\subseteq \app(\overline x,\overline \xi,\overline\Gamma)$. For any $\dxx\in\app(\overline x,\overline \xi,\overline\Gamma)$, we have
	\begin{align}
		0=&\la \dxx,h'(\overline x)^* \dxi+\overline{\mB}^*\dGG\ra=\la h'(\overline x)\dxx, \dxi\ra+\la \overline{\mB}(\dxx),\dGG\ra=\la \overline{\mB}(\dxx)_{\beta\beta},\dGG_{\beta\beta}\ra,
	\end{align}
	where the last equation follows from \eqref{eq:affC}. Since $|\beta|=1$, by noting that $\dGG_{\beta\beta}\neq 0$, we know that $\overline{\mB}(\dxx)_{\beta\beta}=0$. Therefore,  we obtain from \eqref{eq:linC} that $\appl(\overline x,\overline \xi,\overline\Gamma)\subseteq \app(\overline x,\overline \xi,\overline\Gamma)$. Then, by Definitions \ref{def:W-SOC} and \ref{def:S-SOSC},   the W-SOC and S-SOSC are equivalent. Therefore, we know from (ii) of Theorem \ref{theorem:bar-N-SDP} that $\overline{\cal U}_{\mI}$ is nonsingular. The proof is then completed.	$\hfill \Box$
\end{proof}

\begin{remark}\label{rema:SC}
	The condition $|\beta|=0$ is satisfied if and only if the well-known strict complementarity condition holds for \eqref{prog:NLSDP}. In this case, it can be verified that the sets in \eqref{eq:critical-cone-SDP}, \eqref{eq:lin-tangent-SDP}, and \eqref{eq:aff-critical-cone-SDP} are identical, and the sets in \eqref{eq:affC} and \eqref{eq:affC} are likewise identical. Therefore, the W-SRCQ is equivalent to the constraint nondegeneracy, and the W-SOC is equivalent to the S-SOSC. Moreover, in this scenario, the mapping $F$ is smooth near $(\overline x,\overline\xi,\overline\Gamma)$, reducing our conditions to those applicable to the classical Newton method. 
\end{remark}

\begin{remark}\label{rema:w-w-NLP-B}
 	It is worth noting that the condition $|\beta|\le 1$ is always satisfied for nonlinear programming (NLP). Actually, as we mentioned before, the positive orthant $\mathbb{R}^n_+$ can be represented as the Cartesian product of $n$ one-dimensional positive semidefinite cones. Consequently, based on Proposition \ref{prop:beta<=1}, the existence of a nonsingular element in the Bouligand generalized Jacobian $\partial_B F(\overline x,\overline \xi,\overline\Gamma)$ is ensured under the W-SOC and the W-SRCQ for the conventional NLP, which recovers the corresponding result obtained by Izmailov and Solodov {\tvio \cite[Proposition 6]{izmailov2003karush}}.
\end{remark}
	
However, the following simple example illustrates that for the NLSDP \eqref{prog:NLSDP}, if the number of zero eigenvalues of $g(\overline x)+\overline\Gamma$ is greater than one, i.e., $|\beta|>1$, the W-SRCQ and W-SOC are not sufficient to guarantee the existence of nonsingular elements in $\partial_B F(\overline x,\overline \xi,\overline\Gamma)$. This demonstrates the fundamental differences between the NLSDP and the NLP.
\begin{example}\label{example:non-SRCQ}
Consider the following SDP problem:
\begin{align}\nn
\begin{aligned}
\min\quad     &   0 \\
\mbox{s.t.}\quad & x_{12}=0, \\
& X\in \Sbb^2_+ , 
\end{aligned}
\end{align}
where $X=\begin{bmatrix} x_{11} &x_{12}\\x_{21}&x_{22}\end{bmatrix}\in\Sbb^2$. It is clear that $(\overline X,\overline \xi, \overline\Gamma)=(0,0,0)$ is a KKT point. We have $\beta=\{1,2\}$ and $\alpha=\gamma=\emptyset$. It is easy to verify that the W-SRCQ and W-SOC hold at $(\overline X,\overline \xi,\overline\Gamma)$. Moreover, it follows from \cite[Lemma 11]{pang2003semismooth} that ${\cal U}\in\partial_B F(\overline X,\overline \xi,\overline\Gamma)$ if and only if there exists $\Omega\in R$ such that for any $(\dXX,\dxi,\dGG)\in \Sbb^2\times\RR\times\Sbb^2$,
\begin{equation}\label{eq:def-U-exmaple}
	{\cal U}(\dXX,\dxi,\dGG)=
	\begin{bmatrix}
		\begin{bmatrix}
			0 & 0 \\
			0  &0 
		\end{bmatrix}+
		\begin{bmatrix}
			0 & \dxi \\
			\dxi  &0
		\end{bmatrix}+\dGG\\
		\dXX_{12}\\
		- \dXX+\Omega\circ(\dXX+\dGG)
	\end{bmatrix},
\end{equation}
where  $R\in \mathbb{S}^n$ is the set of matrices defined by
\[
R:=\left\{\begin{bmatrix} 0 &0\\0&0\end{bmatrix},\begin{bmatrix} 1 &1\\1&1\end{bmatrix} \right\}\bigcup\left\{\begin{bmatrix} 0 &t\\t&1\end{bmatrix}\mid t\in[0,1]\right\}\bigcup\left\{\begin{bmatrix} 1 &t\\t&0\end{bmatrix}\mid t\in[0,1]\right\}.
\]
Then, one can check easily that all elements in $\partial_B F(\overline X,\overline \xi,\overline\Gamma)$ are singular. 
\end{example}

Next, let us consider the Clarke generalized Jacobian defined by \eqref{eq:partialC} of the KKT nonsmooth mapping \eqref{eq:SDP-natural-mapping}. Interestingly, one may observe that for Example \ref{example:non-SRCQ}, the convex combination $t {\cal U}_0+(1-t){\cal U}_{\cal I}\in \partial_C F(\overline X,\overline \xi,\overline\Gamma)$ for any $t\in(0,1)$ is actually nonsingular, where ${\cal U}_0$ and  ${\cal U}_{\cal I}$ are elements in $\partial_B F(\overline X,\overline \xi,\overline\Gamma)$ defined by \eqref{eq:def-U-exmaple} with $\Omega=\begin{bmatrix}
		0 & 0 \\
		0  &0 
	\end{bmatrix}$ and $\begin{bmatrix}
	1 & 1 \\
	1 & 1 
	\end{bmatrix}$, respectively. In fact, we have the following general results, which demonstrate that the W-SOC and W-SRCQ guarantee the existence of a nonsingular element in the Clarke generalized Jacobian.

\begin{proposition}\label{prop:pC}
{Let $\overline{x}\in\mathbb{X}$ be a stationary point of \eqref{prog:NLSDP} with a Lagrange multiplier $(\overline\xi,\overline\Gamma)\in M(\overline{x})$. Suppose that the W-SOC and W-SRCQ hold at $(\overline x,\overline \xi,\overline\Gamma)$. Let $\overline{\cal U}_0$ and $\overline{\cal U}_{\cal I}$ be two elements defined in \eqref{eq:U0} and \eqref{eq:U1}, respectively.}
\begin{itemize}
\item[(i)] There exists $t\in(0,1)$ such that
\begin{equation*}
t\overline{\cal U}_{0}+(1-t)\overline{\cal U}_{\mI}\in\partial_C F(\overline x,\overline \xi,\overline\Gamma) \text{ is nonsingular.} 
\end{equation*}

\item[(ii)] If, additionally, the NLSDP \eqref{prog:NLSDP} is convex, that is, it becomes a CSDP, then for any \(t \in (0,1)\),  
\begin{equation*}
	t\overline{\cal U}_{0}+(1-t)\overline{\cal U}_{\mI}\in\partial_C F(\overline x,\overline \xi,\overline\Gamma) \text{ is nonsingular.}
\end{equation*}
\end{itemize}
\end{proposition}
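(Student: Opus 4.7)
The plan is to imitate the proof of Theorem \ref{theorem:bar-N-SDP} applied to $\overline{\cal U}(t):=t\overline{\cal U}_{0}+(1-t)\overline{\cal U}_{\mI}$. First I would note that $\overline{\cal U}(t)\in\partial_C F(\overline x,\overline\xi,\overline\Gamma)$: by Proposition \ref{prop:dif-B-F} it corresponds to the middle block ${\cal W}=(1-t){\cal I}$, which is a convex combination of $0$ and $\cal I$ in $\partial_B\Pi_{\Sbb^{|\beta|}_+}(0)$, hence lies in $\partial_C\Pi_{\Sbb^{|\beta|}_+}(0)$. The degenerate case $|\beta|=0$ reduces to Theorem \ref{theorem:bar-N-SDP}, so we may assume $|\beta|\ge 1$.

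Supposing $(\dxx,\dxi,\dGG)\in\ker\overline{\cal U}(t)$, unpacking the componentwise system as in \eqref{eq:Vz1-Vzero}--\eqref{theo:fo-U-1} yields exactly the same equations, except that the $\beta\beta$ block now reads $t\,\overline\mB(\dxx)_{\beta\beta}=(1-t)\widetilde{\Delta\Gamma}_{\beta\beta}$. In particular $\dxx\in\app(\overline x,\overline\xi,\overline\Gamma)$, and if $\dxx=0$ then the remaining relations (together with $t\ne 1$) give $\widetilde{\Delta\Gamma}_{\alpha\alpha}=\widetilde{\Delta\Gamma}_{\alpha\beta}=\widetilde{\Delta\Gamma}_{\alpha\gamma}=\widetilde{\Delta\Gamma}_{\beta\beta}=0$, so Lemma \ref{lemma:W-SRCQ-and-CN}(i) combined with W-SRCQ forces $(\dxi,\dGG)=0$. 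Taking the inner product of the first equation with $\dxx$ and invoking \eqref{eq:sigma} produces the key identity
\begin{equation*}
q(\dxx)+\frac{t}{1-t}\|\overline\mB(\dxx)_{\beta\beta}\|^2=0,\quad q(\dxx):=\la\dxx,\nabla^2_{xx}L(\overline x,\overline\xi,\overline\Gamma)\dxx\ra-\sigma(\overline\Gamma,\mT^2_{\Sbb^n_+}(g(\overline x),g'(\overline x)\dxx)).
\end{equation*}

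For part (i), I would then argue by compactness that a sufficiently large $c:=t/(1-t)$ (equivalently, $t$ close enough to $1$) renders the left-hand side strictly positive on the unit sphere $S$ of $\app(\overline x,\overline\xi,\overline\Gamma)$, contradicting the existence of $\dxx\ne 0$ in the kernel. Indeed, W-SOC gives $q>0$ on the compact set $S\cap\appl$, so by continuity $q$ is bounded below by a positive constant in an open sphere-neighborhood $U$ of this set. Outside $U$, using that $\appl=\{\dxx\in\app:\overline\mB(\dxx)_{\beta\beta}=0\}$, the quantity $\|\overline\mB(\dxx)_{\beta\beta}\|^2$ is bounded below by some $\delta>0$ by compactness while $q$ stays bounded; hence for $c$ large enough, $q+c\|\overline\mB(\dxx)_{\beta\beta}\|^2>0$ uniformly on $S$.

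For part (ii), in the CSDP setting two sign facts make the argument uniform in $t$: the KKT condition yields $\overline\Gamma\in-\Sbb^n_+$, so $\Sbb^n_+$-convexity of $g$ makes $x\mapsto\la\overline\Gamma,g(x)\ra$ convex and $\nabla^2_{xx}L(\overline x,\overline\xi,\overline\Gamma)\succeq 0$; and formula \eqref{eq:sigma} combined with $\overline\lambda_j<0<\overline\lambda_i$ for $i\in\alpha,j\in\gamma$ gives $\sigma\le 0$. Thus $q\ge 0$ on all of $\mX$, and the identity forces both $\overline\mB(\dxx)_{\beta\beta}=0$ (i.e.\ $\dxx\in\appl$) and $q(\dxx)=0$; W-SOC then yields $\dxx=0$ for every $t\in(0,1)$. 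The chief obstacle is the compactness step in (i), where one must control $q$ near the boundary between $\appl$ and $\app$ so that a single $c$ works uniformly across the sphere.
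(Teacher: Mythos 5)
Your proof is correct. Part (ii) follows the paper's own argument almost verbatim: the same componentwise system, the same quadratic identity with the $\frac{t}{1-t}\|\overline\mB(\dxx)_{\beta\beta}\|^2$ term, positive semidefiniteness of $\nabla^2_{xx}L$ plus the sign of the $\sigma$-term forcing $\overline\mB(\dxx)_{\beta\beta}=0$, and then W-SOC and W-SRCQ finishing the job. For part (i) you take a genuinely different route. The paper argues by contradiction with the sequence $\overline{\cal U}^k=\frac{k}{k+1}\overline{\cal U}_0+\frac{1}{k+1}\overline{\cal U}_{\mI}$: assuming every convex combination is singular, it normalizes the kernel vectors $\dxx^k$, extracts a limit $\dxx^\infty$, shows $k\|\overline\mB(\dxx^k)_{\beta\beta}\|^2$ bounded forces $\overline\mB(\dxx^\infty)_{\beta\beta}=0$, and contradicts W-SOC at $\dxx^\infty\in\appl$. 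You instead prove directly that the quadratic form $q(\dxx)+c\|\overline\mB(\dxx)_{\beta\beta}\|^2$ is strictly positive on the unit sphere of $\app(\overline x,\overline\xi,\overline\Gamma)$ for all sufficiently large $c=t/(1-t)$, by splitting the sphere into a neighborhood of $S\cap\appl$ (where $q>0$ by W-SOC and continuity) and its compact complement (where $\|\overline\mB(\dxx)_{\beta\beta}\|^2\ge\delta>0$ and $q$ is bounded). Your version is slightly more informative — it yields an explicit threshold and shows that \emph{every} $t$ close enough to $1$ works, which is in the spirit of the paper's Remark~\ref{rema:w-w-SDP-C} on the finiteness of singular $t$ — at the cost of the extra compactness bookkeeping (including the trivial subcases $\appl=\{0\}$ or $\app=\{0\}$, which you should mention for completeness). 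The paper's sequential argument is shorter but only delivers existence of one good $t$. Both are sound; the handling of the $\dxx=0$ case via Lemma~\ref{lemma:W-SRCQ-and-CN}(i) and the membership of the convex combination in $\partial_C F$ are identical in substance.
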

\begin{proof} Let $A=g(\overline{x})+\overline{\Gamma}$ satisfy the eigenvalue decomposition \eqref{eq:eig-decomp-SDP} and $\overline{P}\in{\cal O}^n(A)$ with the index sets $\alpha$, $\beta$ and $\gamma$ defined by \eqref{eq:index}. Recall the linear operator $\overline{\cal B}:\mathbb{X}\to \mathbb{S}^n$ defined in \eqref{eq:def-B-map}.
	
\noindent {\bf (i)} Assume that for any $t\in (0,1)$, $t\overline{\cal U}_{0}+(1-t)\overline{\cal U}_{\mI}\in\partial_C F(\overline x,\overline \xi,\overline\Gamma)$ is singular. Define a sequence of  elements $\{\overline{\cal U}^k\}$ as $\overline{\cal U}^k=\frac{k}{k+1}\overline{\cal U}_{0}+\frac{1}{k+1}\overline{\cal U}_{\mI}\in\partial_C F(\overline x,\overline \xi,\overline\Gamma)$ for each $k$. Since $\overline{\cal U}^k$ is singular, we know that for each $k$, there are $0\neq(\dxx^k,\dxi^k,\dGG^k)\in\mZ$ such that $\overline{\cal U}^k(\dxx^k,\dxi^k,\dGG^k)=0$. Thus, for each $k$, we know from \eqref{eq:UandV} and Proposition \ref{prop:dif-B-F} that  
\begin{align}
\nabla_{xx}^2L(\overline x,\overline \xi,\overline\Gamma)\dxx^k+h'(\overline x)^*\dxi^k+g'(\overline x)^*\dGG^k=0,&\label{eq:Uk=0-1}\\
h'(\overline x)\dxx^k =0,& \label{eq:Uk=0-2}\\
\widetilde{\Delta\Gamma}^k_{\alpha\alpha}=0,\quad \widetilde{\Delta \Gamma}^k_{\alpha\beta}=0,&
\label{eq:Uk=0-3}\\
\overline{\mB}(\dxx^k)_{\beta\gamma}=0,\quad \overline{\mB}(\dxx^k)_{\gamma\gamma}=0,& \label{eq:Uk=0-4}\\
(E_{\alpha\gamma}-\overline{\Xi}_{\alpha\gamma} )\circ \overline{\mB}(\dxx^k)_{\alpha\gamma}-\overline{\Xi}_{\alpha\gamma} \circ \widetilde{\Delta\Gamma}^k_{\alpha\gamma}=0, & \label{eq:Uk=0-5}\\
k\overline{\mB}(\dxx^k)_{\beta\beta}-\widetilde{\Delta\Gamma}^k_{\beta\beta}=0, & \label{eq:Uk=0-6}
\end{align}
where  $\widetilde{\Delta\Gamma}^k =\overline{P}^T\Delta\Gamma^k\overline{P}$ with $\overline{P}\in\mathcal{O}^n(A)$ and $A=g(\overline{x})+\overline{\Gamma}$, the matrix $\overline\Xi\in \mathbb{S}^n$ is given by \eqref{eq:Xi} and $E\in\Sbb^n$ denotes the matrix whose elements are all ones. We then conclude that for each $k$, $\dxx^k\neq 0$. Indeed, if there exists some $\overline{k}$ such that $\dxx^{\overline{k}}=0$, then by \eqref{eq:Uk=0-3}, \eqref{eq:Uk=0-5} and \eqref{eq:Uk=0-6}, we obtain  that 
\begin{equation}\label{eq:pC-dx0-1}
\widetilde{\Delta\Gamma}^{\overline{k}}_{\alpha\alpha}=0,\quad \widetilde{\Delta\Gamma}^{\overline{k}}_{\alpha\beta}=0, \quad \widetilde{\Delta\Gamma}^{\overline{k}}_{\beta\beta}=0  \quad {\rm and} \quad\widetilde{\Delta\Gamma}^{\overline{k}}_{\alpha\gamma}=0.
\end{equation} 
Thus, combining with \eqref{eq:Uk=0-1}, we know that 
\begin{equation}\label{eq:pC-dx0-2}
\left\{
\begin{array}{l}
h'(\overline x)^*\dxi^{\overline{k}}+\overline\mB^*\dtG^{\overline{k}}=0, \\ [3pt]
\widetilde{\Delta\Gamma}^{\overline{k}}_{\alpha\alpha}=0,\quad  \widetilde{\Delta\Gamma}^{\overline{k}}_{\alpha\beta}=0,\quad \widetilde{\Delta\Gamma}^{\overline{k}}_{\beta\beta}=0,\quad \widetilde{\Delta\Gamma}^{\overline{k}}_{\alpha\gamma}=0.
\end{array}
\right.
\end{equation} 
It then follows from \eqref{eq:equal:W-SRCQ} in Lemma \ref{lemma:W-SRCQ-and-CN} (i) that $(\dxi^{\overline{k}},{\Delta\Gamma}^{\overline{k}})=0$, 
which, together with $\Delta x^{\overline k} = 0$, contradicts with the assumption. Now, since $\dxx^k\neq 0$ for all $k$, without loss of generality, we may assume that $\|\dxx^k\|=1$ and $\dxx^k\to\dxx^{\infty}\neq 0$ as $k\to\infty$. By taking the inner product with $\dxx^k$ in \eqref{eq:Uk=0-1}, we obtain from \eqref{eq:Uk=0-2}-\eqref{eq:Uk=0-6} that for each $k$,
\begin{align}
\la\dxx^k ,\nabla_{xx}^2L(\overline x,\overline \xi,\overline\Gamma) \dxx^k \ra+2\sum_{i\in\alpha}\sum_{j\in \gamma}\frac{-\overline\lambda_j }{\overline\lambda_i }\overline\mB(\dxx^k)_{ij}^2+\sum_{i\in\beta}\sum_{j\in\beta}k\overline\mB(\dxx^k)_{ij}^2 =0. \label{prop-eq:Vz-1-dx}
\end{align}
Let $\mQ: \mX\to\mX$ be the linear map such that
\begin{align}\nn
\la d,\mQ(d)\ra=\la d ,\nabla_{xx}^2L(\overline x,\overline \xi,\overline\Gamma) d \ra+2\sum_{i\in\alpha}\sum_{j\in \gamma}\frac{-\overline\lambda_j }{\overline\lambda_i }\overline\mB(d)_{ij}^2, \quad \forall \, d\in \mX.
\end{align} 
Then there exists $q>0$ that $|\mQ(d)|\le q\| d\| ^2$ for any $d\in\mX$. Therefore, we have from \eqref{prop-eq:Vz-1-dx} that for each $k$,
\begin{align}\nn
|\overline\mB(\dxx^k)_{ij}|^2\le \frac{1}{k} q,\quad (i,j)\in \beta\times\beta.
\end{align}
As a result, $\overline\mB(\dxx^{\infty})_{\beta\beta}=0$, which, together with \eqref{eq:Uk=0-4}, implies $\dxx^{\infty}\in\appl(\overline x,\overline \xi,\overline\Gamma)$. 
 Further note that \eqref{prop-eq:Vz-1-dx} implies that
	\begin{align}\nonumber
		\langle \dxx^k, \mQ \dxx^k\rangle \le 0.
	\end{align}
	Taking the limit as \(k \to +\infty\), we obtain \(\langle \dxx^\infty, \mQ \dxx^\infty\rangle \le 0\), which contradicts the W-SOC given that \(0 \neq \dxx^{\infty} \in \appl(\overline x, \overline \xi, \overline\Gamma)\).
Therefore, there exists a nonsingular $\overline{\cal U}^k$.

\noindent {\bf (ii)} For the CSDP, if there is $t\in(0,1)$ that $\overline{\cal U}_t=t\overline {\cal U}_{0}+(1-t)\overline {\cal U}_{\mI}$ is singular. Then, there is $(\dxx,\dxi,\dGG)\neq0$ such that $\overline{\cal U}_t(\dxx,\dxi,\dGG)=0$. Similarly to the proof of (i), we obtain from \eqref{eq:pC-dx0-1}-\eqref{prop-eq:Vz-1-dx} that $\dxx\neq0$ and 
\begin{align}\label{prop-eq:Vz-1-dx:22}
\la\dxx ,\nabla_{xx}^2L(\overline x,\overline \xi,\overline\Gamma) \dxx \ra+2\sum_{i\in\alpha}\sum_{j\in \gamma}\frac{-\overline\lambda_j }{\overline\lambda_i }\overline\mB(\dxx)_{ij}^2+\sum_{i\in\beta}\sum_{j\in\beta}\frac{t}{1-t}\overline\mB(\dxx)_{ij}^2 =0. 
\end{align}
By convexity, we know that $\nabla_{xx}^2L(\overline x,\overline \xi,\overline\Gamma)$ is positive semidefinite. Thus, $\overline\mB(\dxx)_{\beta\beta}=0$. This, together with \eqref{eq:Uk=0-2} and \eqref{eq:Uk=0-4}, yields $0\neq \dxx\in\appl(\overline x,\overline \xi,\overline\Gamma)$, and {\tvio further implies} the contradiction between \eqref{prop-eq:Vz-1-dx:22} the W-SOC. Therefore, we conclude that $\overline{\cal U}_t=t\overline {\cal U}_{0}+(1-t)\overline {\cal U}_{\mI}$ is nonsingular for any $t\in(0,1)$. $\hfill \Box$
\end{proof}

\begin{remark}\label{rema:w-w-SDP-C}
	According to the proof of part (i) of Proposition \ref{prop:pC}, there exists only a finite number of singular elements on the line segment connecting $\overline{\cal U}_0$ and $\overline{\cal U}_{\mI}$. In fact, by considering the matrix representations of $\overline{\cal U}_0$ and $\overline{\cal U}_{\mI}$ and invoking the fundamental theorem of algebra, we are able to deduce that the non-Zero polynomial ${\rm det}(t\overline{\cal U}_{0}+(1-t)\overline{\cal U}_{\mI})$ has a number of roots that does not exceed its degree.
\end{remark}

\subsection{Necessary conditions for the existence of nonsingular elements in generalized Jacobians}
 
	We first show that when the NLSDP \eqref{prog:NLSDP} is convex, i.e., for the CSDP, the conditions of Theorem \ref{theorem:bar-N-SDP} are actually necessary. {\vio The proof relies on the following lemma regarding the characterization of W-SOC violations.

\begin{lemma}\label{lemma:W-SOC-vio}
	Let $\overline{x}\in\mathbb{X}$ be a stationary point of the NLSDP with a Lagrange multiplier $(\overline\xi,\overline\Gamma)\in M(\overline{x})$. Suppose that the W-SOC does not hold at $(\overline x,\overline \xi,\overline\Gamma)$ and $\nabla_{xx}^2L(\overline x,\overline \xi,\overline\Gamma)$ is positive semidefinite, there exists $\dxx\neq 0$ that  
\begin{align}\label{eq:WSOC_fail}
\left\{
\begin{aligned}
&\nabla_{xx}^2L(\overline x,\overline \xi,\overline\Gamma)\dxx=0,\\
&h'(\overline x) \dxx=0,\\
&\overline\mB(\dxx)_{\alpha\gamma}=0, \quad \overline\mB(\dxx)_{\beta\beta}= 0,\quad \overline\mB(\dxx)_{\beta\gamma}= 0,\quad \overline\mB(\dxx)_{\gamma\gamma}= 0.
\end{aligned}
\right.
\end{align}
\end{lemma}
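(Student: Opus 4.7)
The plan is to unpack the W-SOC inequality using the two explicit formulas already available: the characterization of $\sigma(\overline\Gamma, \mathcal{T}^2_{\mathbb{S}^n_+}(g(\overline x), g'(\overline x)\dxx))$ in \eqref{eq:sigma} and the description of $\appl(\overline x, \overline\xi, \overline\Gamma)$ in \eqref{eq:linC}. With these, the W-SOC condition \eqref{eq:W-SOC-P} is equivalent to the strict positivity on $\appl(\overline x, \overline\xi, \overline\Gamma) \setminus \{0\}$ of the quadratic form
\[
Q(\dxx) := \langle \dxx, \nabla_{xx}^2 L(\overline x, \overline\xi, \overline\Gamma)\dxx\rangle + 2\sum_{i\in\alpha}\sum_{j\in\gamma} \frac{-\overline\lambda_j}{\overline\lambda_i}\,\overline\mB(\dxx)_{ij}^2.
\]
Since $\overline\lambda_i > 0$ for $i\in\alpha$ and $\overline\lambda_j < 0$ for $j\in\gamma$, each coefficient $-\overline\lambda_j/\overline\lambda_i$ is strictly positive, so the double sum is a non-negative combination of squares. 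Combined with the standing assumption that $\nabla_{xx}^2 L(\overline x, \overline\xi, \overline\Gamma)$ is positive semidefinite, $Q$ is the sum of two non-negative terms on all of $\mX$.

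Next, failure of the W-SOC supplies some $0 \neq \dxx \in \appl(\overline x, \overline\xi, \overline\Gamma)$ with $Q(\dxx) \le 0$. Because both summands in $Q(\dxx)$ are non-negative, each of them must equal zero. The vanishing of the double sum, together with the strict positivity of its coefficients, forces $\overline\mB(\dxx)_{ij} = 0$ for every $(i,j) \in \alpha \times \gamma$, i.e., $\overline\mB(\dxx)_{\alpha\gamma} = 0$. The vanishing of the first summand, combined with positive semidefiniteness of the Hessian, yields $\nabla_{xx}^2 L(\overline x, \overline\xi, \overline\Gamma)\dxx = 0$ (this is the standard consequence of $\langle d, Md\rangle = 0$ for $M \succeq 0$, which can be seen by writing $M = N^*N$ so that $Md = N^*Nd = 0$ whenever $Nd = 0$).

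Finally, the membership $\dxx \in \appl(\overline x, \overline\xi, \overline\Gamma)$ together with the explicit description \eqref{eq:linC} delivers the remaining identities $h'(\overline x)\dxx = 0$, $\overline\mB(\dxx)_{\beta\beta} = 0$, $\overline\mB(\dxx)_{\beta\gamma} = 0$, and $\overline\mB(\dxx)_{\gamma\gamma} = 0$. Assembling these with the two consequences from the previous step produces exactly the system \eqref{eq:WSOC_fail}, proving the lemma. There is no real obstacle here beyond noticing the favourable sign pattern that turns $Q$ into a sum of non-negative pieces; once that observation is made, the argument is purely algebraic and the PSD assumption on the Hessian is precisely what upgrades a zero quadratic form into the gradient-type condition $\nabla_{xx}^2 L(\overline x, \overline\xi, \overline\Gamma)\dxx = 0$.
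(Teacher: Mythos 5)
Your proof is correct, and since the paper omits the proof of this lemma ``for the sake of simplicity,'' your argument is evidently the intended one: it is the same decomposition of $\langle \dxx,\nabla^2_{xx}L\,\dxx\rangle-\sigma(\cdot)$ into the two nonnegative pieces (via \eqref{eq:sigma} and the sign pattern $\overline\lambda_i>0$, $\overline\lambda_j<0$) that the authors themselves exploit in \eqref{eq:cont-SOSC-bar} and in the proof of Proposition~\ref{prop:pC}(ii). The only point worth stating explicitly, which you do, is that negating the strict inequality in \eqref{eq:W-SOC-P} yields some $0\neq\dxx\in\appl(\overline x,\overline\xi,\overline\Gamma)$ with the quadratic form $\le 0$, after which positive semidefiniteness forces both summands to vanish and \eqref{eq:linC} supplies the remaining identities.
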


	The proof is omitted here for the sake of simplicity. Based on this lemma,} we have the following characterizations of the nonsingularity of $\overline{\cal U}_0$ and $\overline{\cal U}_{\cal I}$ defined by \eqref{eq:U0} and \eqref{eq:U1}, respectively, for the CSDP. 
\begin{proposition}\label{prop:pd:LSDP-V01}
Let $\overline{x}\in\mathbb{X}$ be a stationary point of the CSDP with a Lagrange multiplier $(\overline\xi,\overline\Gamma)\in M(\overline{x})$.
\begin{itemize}
\item[(i)] The W-SOC and constraint nondegeneracy hold  at $(\overline x,\overline \xi,\overline\Gamma)$ if and only if $\overline{\cal U}_0$ defined in \eqref{eq:U0} is nonsingular;

\item[(ii)] the S-SOSC and W-SRCQ hold  at $(\overline x,\overline \xi,\overline\Gamma)$ if and only if  $\overline{\cal U}_{\cal I}$ defined in \eqref{eq:U1} is nonsingular.
\end{itemize}
\end{proposition}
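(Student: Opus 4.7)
The plan is to establish the converse direction in each part, since Theorem~\ref{theorem:bar-N-SDP} already provides sufficiency. I argue by contrapositive: assuming one of the conditions in the statement fails, I exhibit a nonzero element in the kernel of $\overline{\cal U}_0$ (resp.\ $\overline{\cal U}_{\cal I}$), contradicting nonsingularity. The convexity of the CSDP enters in exactly one place: it forces $\nabla_{xx}^2 L(\overline x, \overline\xi, \overline\Gamma)$ to be positive semidefinite, which is precisely what upgrades the one-sided sufficient conditions of Theorem~\ref{theorem:bar-N-SDP} into two-sided ones.

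For part (i), I split into two cases. First, if the constraint nondegeneracy fails, Lemma~\ref{lemma:W-SRCQ-and-CN}(ii) produces $(\dxi,\dGG)\neq 0$ with $h'(\overline x)^*\dxi + \overline\mB^*\widetilde{\dGG}=0$ and $\widetilde{\dGG}_{\alpha\alpha}=\widetilde{\dGG}_{\alpha\beta}=\widetilde{\dGG}_{\alpha\gamma}=0$ (the $\beta\alpha$ and $\gamma\alpha$ blocks vanish by symmetry). Inspecting the formula \eqref{eq:exactV0} for $\overline{\cal V}_0$, every nonzero block of $\overline{\cal V}_0(\dGG)$ lives in the $(\alpha,\alpha),(\alpha,\beta),(\alpha,\gamma),(\beta,\alpha),(\gamma,\alpha)$ positions and hence vanishes. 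Thus $\overline{\cal V}_0(\dGG)=0$, so $\overline{\cal U}_0(0,\dxi,\dGG)=0$ by \eqref{eq:U0}, contradicting nonsingularity. Second, if W-SOC fails, I apply Lemma~\ref{lemma:W-SOC-vio} (whose PSD hypothesis holds by convexity) to obtain $\dxx\neq 0$ satisfying \eqref{eq:WSOC_fail}. Setting $\widetilde H=\overline\mB(\dxx)$ in \eqref{eq:exactV0}, the vanishing of $\overline\mB(\dxx)_{\alpha\gamma},\overline\mB(\dxx)_{\beta\beta},\overline\mB(\dxx)_{\beta\gamma},\overline\mB(\dxx)_{\gamma\gamma}$ makes $\overline{\cal V}_0(g'(\overline x)\dxx)=g'(\overline x)\dxx$. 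Combined with $\nabla_{xx}^2 L\,\dxx=0$ and $h'(\overline x)\dxx=0$ from \eqref{eq:WSOC_fail}, this gives $\overline{\cal U}_0(\dxx,0,0)=0$, another contradiction.

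Part (ii) follows the same template with $\overline{\cal V}_{\cal I}$ in place of $\overline{\cal V}_0$. Failure of W-SRCQ yields, via Lemma~\ref{lemma:W-SRCQ-and-CN}(i), a nonzero $(\dxi,\dGG)$ with $h'(\overline x)^*\dxi + \overline\mB^*\widetilde{\dGG}=0$ and the additional block $\widetilde{\dGG}_{\beta\beta}=0$ on top of the three previous ones. This extra zero is exactly the block that \eqref{eq:exactV1} preserves (rather than discards as in $\overline{\cal V}_0$), so $\overline{\cal V}_{\cal I}(\dGG)=0$ and $\overline{\cal U}_{\cal I}(0,\dxi,\dGG)=0$. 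For the failure of S-SOSC, I mimic Lemma~\ref{lemma:W-SOC-vio} on the larger set $\app(\overline x,\overline\xi,\overline\Gamma)$: a violating $\dxx\neq 0$ satisfies $\langle \dxx,\nabla_{xx}^2L\,\dxx\rangle - \sigma(\overline\Gamma,{\cal T}^2_{\Sbb^n_+}(g(\overline x),g'(\overline x)\dxx))\le 0$; since the Hessian term is $\ge 0$ by convexity and the support-function term is $\le 0$ by \eqref{eq:sigma} together with $\overline\lambda_j<0$ for $j\in\gamma$, both must vanish, giving $\nabla_{xx}^2L\,\dxx=0$ and $\overline\mB(\dxx)_{\alpha\gamma}=0$. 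With the remaining blocks $\overline\mB(\dxx)_{\beta\gamma}=\overline\mB(\dxx)_{\gamma\gamma}=0$ and $h'(\overline x)\dxx=0$ supplied by membership in $\app$ via \eqref{eq:affC}, the same bookkeeping in \eqref{eq:exactV1} yields $\overline{\cal V}_{\cal I}(g'(\overline x)\dxx)=g'(\overline x)\dxx$ and hence $\overline{\cal U}_{\cal I}(\dxx,0,0)=0$.

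The main obstacle is the S-SOSC analogue of Lemma~\ref{lemma:W-SOC-vio}: the quadratic inequality is now posed on the larger set $\app$ rather than $\appl$, so the $(\beta,\beta)$ block of $\overline\mB(\dxx)$ is no longer a priori zero and must instead be kept out of the final kernel equation by relying solely on the preservation of that block by $\overline{\cal V}_{\cal I}$ in \eqref{eq:exactV1}. Once this splitting of the quadratic and support-function terms is justified by convexity and the sign structure of $\overline\lambda_\gamma$, the remainder of the argument is a careful match between the zero blocks produced by each failing hypothesis and the blocks annihilated by $\overline{\cal V}_0$ or $\overline{\cal V}_{\cal I}$.
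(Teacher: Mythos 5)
Your proposal is correct and follows essentially the same route as the paper: sufficiency is delegated to Theorem \ref{theorem:bar-N-SDP}, and necessity is obtained by turning the failure of each condition into an explicit nonzero kernel element of $\overline{\cal U}_0$ (resp.\ $\overline{\cal U}_{\cal I}$) via Lemma \ref{lemma:W-SRCQ-and-CN} and Lemma \ref{lemma:W-SOC-vio}. Your block-by-block verification of part (ii) — in particular the observation that the sign structure of $\overline\lambda_\gamma$ and the positive semidefiniteness of the Hessian force both terms in the violated S-SOSC inequality to vanish separately, while the $(\beta,\beta)$ block is harmlessly preserved by $\overline{\cal V}_{\cal I}$ — correctly fills in the details the paper dismisses as ``similar.''
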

\begin{proof}
Let $A=g(\overline{x})+\overline{\Gamma}$ satisfy the eigenvalue decomposition \eqref{eq:eig-decomp-SDP} and $\overline{P}\in{\cal O}^n(A)$ with the index sets $\alpha$, $\beta$ and $\gamma$ defined by \eqref{eq:index}. Recall the linear operator $\overline{\cal B}:\mathbb{X}\to \mathbb{S}^n$ defined by \eqref{eq:def-B-map}. We only show (i), as (ii) can be established through a similar argument.

By Theorem \ref{theorem:bar-N-SDP}, we only need to show that the nonsingularity of $\overline{\cal U}_0$ implies the W-SOC and constraint nondegeneracy. Suppose $\overline{\cal U}_0(\dxx,\dxi,\dGG)=0$. We know from \eqref{eq:U0} and Proposition \ref{prop:dif-B-F} that
\begin{align}
\nabla_{xx}^2L(\overline x,\overline \xi,\overline\Gamma)\dxx+h'(\overline x)^*\dxi+g'(\overline x)^*\dGG=0,&\label{eq:U0=1-1}\\
h'(\overline x)\dxx =0,\label{eq:U0=1-2}& \\
\widetilde{\Delta\Gamma}_{\alpha\alpha}=0,\quad \widetilde{\Delta\Gamma}_{\alpha\beta}=0,&
\label{eq:U0=1-3}\\
\overline\mB(\dxx)_{\beta\beta}=0,\quad \overline\mB(\dxx)_{\beta\gamma}=0,\quad \overline\mB(\dxx)_{\gamma\gamma}=0,& \label{eq:U0=1-4}\\
(E_{\alpha\gamma}-\overline\Xi_{\alpha\gamma} )\circ \overline\mB(\dxx)_{\alpha\gamma}-\overline \Xi_{\alpha\gamma} \circ \widetilde{\Delta\Gamma}_{\alpha\gamma}=0, & \label{eq:U0=1-5}
\end{align}
where $\widetilde{\Delta\Gamma}=\overline{P}^T\dGG\overline{P} $, $\overline\Xi$ is defined in \eqref{eq:Xi} for $A$ and $E$ represents the matrix consisting of all ones in $\Sbb^n$. Suppose that the constraint nondegeneracy does not hold. Then, by (ii) of Lemma \ref{lemma:W-SRCQ-and-CN}, we know that there exist $0\neq(\dxi,\dGG)$ such that  
\begin{align}\nn
	\left\{
	\begin{array}{l}
		h'(\overline x)^*\dxi+\overline\mB^*\widetilde{\Delta\Gamma}=0,\\
		\widetilde{\Delta\Gamma}_{\alpha\alpha}=0,\quad  \widetilde{\Delta\Gamma}_{\alpha\beta}=0, \quad  \widetilde{\Delta\Gamma}_{\alpha\gamma}=0.
	\end{array}
	\right.
\end{align}
Thus, it follows from \eqref{eq:U0=1-1}-\eqref{eq:U0=1-5} that $(\dxx,\dxi,\dGG)=(0,\dxi,\dGG)\neq 0$ satisfy $\overline{\cal U}_0(\dxx,\dxi,\dGG)=0$, which contradicts with the nonsingularity of $\overline{\cal U}_0$. Thus, the constraint nondegeneracy holds at $(\overline x,\overline \xi,\overline\Gamma)$.

On the other hand, suppose that the W-SOC does not hold at $(\overline x,\overline \xi,\overline\Gamma)$. Since $\nabla_{xx}^2L(\overline x,\overline \xi,\overline\Gamma)$ is positive semidefinite, by Lemma \ref{lemma:W-SOC-vio}, there exists $\dxx\neq 0$ such that \eqref{eq:WSOC_fail} holds.	
It then yields that $\overline{\cal U}_0(\dxx,0,0)=0$,  which again contradicts with the nonsingularity of $\overline{\cal U}_0$. The proof of part (i) has been established. $\hfill \Box$
\end{proof}

\begin{remark}
	In the context of linear semidefinite programming, a particular instance of the CSDP, Chan and Sun \cite[Proposition 17]{chan2008constraint} demonstrated that the nonsingularity of $\overline{\cal U}_{0}$ ($\overline{\cal U}_{\cal I}$) implies primal (dual) constraint nondegeneracy. In contrast, Proposition \ref{prop:pd:LSDP-V01} in the current work provides a complete characterization of the nonsingularity of $\overline{\cal U}_{0}$ and $\overline{\cal U}_{\cal I}$ for more general CSDP.
\end{remark}

\begin{remark} 
	For the general NLSDP, one can still use the first part of the proof to show that the nonsingularity of $\overline{\cal U}_0$ (or $\overline{\cal U}_{\mI}$) implies the constraint nondegeneracy (or the W-SRCQ).
\end{remark}

Finally, we can establish in the forthcoming proposition that, for the general NLSDP, the W-SRCQ is necessary to ensure the existence of nonsingular elements in the Clarke generalized Jacobian. Additionally, for the CSDP, both the W-SRCQ and the W-SOC {\tvio are requisites}.
 
\begin{proposition}\label{prop:non-to-CQ}
	Let $\overline{x}\in\mathbb{X}$ be a stationary point of the NLSDP \eqref{prog:NLSDP} with a Lagrange multiplier $(\overline\xi,\overline\Gamma)\in M(\overline{x})$.
\begin{itemize}
\item[(i)] If there is a nonsingular element $\overline{\cal U}\in \partial_C F(\overline x,\overline \xi,\overline \Gamma)$, then the W-SRCQ holds at $(\overline x,\overline \xi,\overline \Gamma)$.

\item[(ii)] For the CSDP, if there is a nonsingular element $\overline{\cal U}\in \partial_C F(\overline x,\overline \xi,\overline \Gamma)$, then the W-SRCQ and W-SOC hold at $(\overline x,\overline \xi,\overline \Gamma)$.
\end{itemize}
\end{proposition}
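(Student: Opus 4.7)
The plan is to prove both parts by contrapositive: if the relevant condition fails at $(\overline x,\overline\xi,\overline\Gamma)$, then every element of $\partial_C F(\overline x,\overline\xi,\overline\Gamma)$ has a common nonzero vector in its kernel. The three ingredients are the block characterization of $\partial_C \Pi_{\mathbb{S}^n_+}(\overline A)$ from Proposition \ref{prop:dif-B-F}, the algebraic reformulation of W-SRCQ failure from Lemma \ref{lemma:W-SRCQ-and-CN}(i), and the W-SOC failure certificate from Lemma \ref{lemma:W-SOC-vio}.

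For part (i), suppose W-SRCQ fails. Then the implication \eqref{eq:equal:W-SRCQ} breaks, so there exists a nonzero pair $(\dxi,\dGG)$ (with $\dGG$ expressed in the eigenvector frame $\overline P$ of $\overline A=g(\overline x)+\overline\Gamma$) satisfying $h'(\overline x)^*\dxi+\overline{\mB}^*\dGG=0$ together with $\dGG_{\alpha\alpha}=\dGG_{\alpha\beta}=\dGG_{\alpha\gamma}=\dGG_{\beta\beta}=0$. Fix an arbitrary $\overline{\cal U}\in\partial_C F(\overline x,\overline\xi,\overline\Gamma)$ associated with $\overline{\cal V}\in\partial_C\Pi_{\mathbb{S}^n_+}(\overline A)$, whose inner block operator is some $\overline{\cal W}\in\partial_C\Pi_{\mathbb{S}^{|\beta|}_+}(0)$. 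I will evaluate $\overline{\cal U}(0,\dxi,\dGG)$ through \eqref{eq:UandV}: the first row is killed by the equation above, the second row is trivially zero, and the third row reduces to $\overline{\cal V}(\dGG)$. Reading off Proposition \ref{prop:dif-B-F}, the vanishing blocks of $\dGG$ annihilate the $(\alpha,\alpha)$, $(\alpha,\beta)$ and $(\alpha,\gamma)$ outputs, the $(\beta,\gamma)$ and $(\gamma,\gamma)$ outputs are structurally zero, and $\overline{\cal W}(\dGG_{\beta\beta})=\overline{\cal W}(0)=0$ by linearity. Hence $\overline{\cal V}(\dGG)=0$ uniformly in the choice of $\overline{\cal V}$, so $(0,\dxi,\dGG)$ is a nonzero vector in $\Null(\overline{\cal U})$, contradicting nonsingularity.

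For part (ii), W-SRCQ is already delivered by part (i), so only W-SOC is at stake. For a CSDP the Lagrangian Hessian $\nabla_{xx}^2 L(\overline x,\overline\xi,\overline\Gamma)$ is positive semidefinite, which is exactly the hypothesis of Lemma \ref{lemma:W-SOC-vio}. If W-SOC fails, that lemma produces $\dxx\neq 0$ with $\nabla_{xx}^2 L(\overline x,\overline\xi,\overline\Gamma)\dxx=0$, $h'(\overline x)\dxx=0$, and $\overline{\mB}(\dxx)_{\alpha\gamma}=\overline{\mB}(\dxx)_{\beta\beta}=\overline{\mB}(\dxx)_{\beta\gamma}=\overline{\mB}(\dxx)_{\gamma\gamma}=0$. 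For any $\overline{\cal U}\in\partial_C F(\overline x,\overline\xi,\overline\Gamma)$, I compute $\overline{\cal U}(\dxx,0,0)$ via \eqref{eq:UandV}: the first two rows vanish from the properties of $\dxx$, while for the third row the only possibly nonzero blocks of $\overline{\mB}(\dxx)$, namely $(\alpha,\alpha)$, $(\alpha,\beta)$ and $(\beta,\alpha)$, pass through $\overline{\cal V}$ unchanged, and $\overline{\cal W}(\overline{\mB}(\dxx)_{\beta\beta})=\overline{\cal W}(0)=0$. Consequently $\overline{\cal V}(g'(\overline x)\dxx)=g'(\overline x)\dxx$, the third row is zero, and $(\dxx,0,0)\neq 0$ witnesses the singularity of $\overline{\cal U}$.

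In spirit the proof is a reversal of Theorem \ref{theorem:bar-N-SDP} viewed through the richer Clarke Jacobian; the main point of care lies in keeping the block accounting correct after rotation by $\overline P$ and in noting that the inner operator $\overline{\cal W}$ annihilates zero inputs in both computations. No genuinely new obstacle arises beyond what is already encapsulated by Lemmas \ref{lemma:W-SRCQ-and-CN} and \ref{lemma:W-SOC-vio}.
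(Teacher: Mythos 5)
Your proposal is correct and follows essentially the same route as the paper's own proof: both parts hinge on showing that any $(0,\dxi,\dGG)$ satisfying the system in Lemma \ref{lemma:W-SRCQ-and-CN}(i) (respectively, $(\dxx,0,0)$ with $\dxx$ from Lemma \ref{lemma:W-SOC-vio}) lies in the kernel of every element of $\partial_C F(\overline x,\overline\xi,\overline\Gamma)$, using the block structure of Proposition \ref{prop:dif-B-F} and the fact that $\overline{\cal W}(0)=0$. The only cosmetic difference is that you phrase part (i) as a contrapositive while the paper argues directly that nonsingularity forces the system's solution set to be trivial; the computations are identical.
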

\begin{proof} Let $A=g(\overline{x})+\overline{\Gamma}$ satisfy the eigenvalue decomposition \eqref{eq:eig-decomp-SDP} and $\overline{P}\in{\cal O}^n(A)$ with the index sets $\alpha$, $\beta$ and $\gamma$ defined by \eqref{eq:index}. Recall the linear operator $\overline{\cal B}:\mathbb{X}\to \mathbb{S}^n$ defined by \eqref{eq:def-B-map}.
	
\noindent {\bf (i)} Let $(\dxi,\Delta\Gamma)\in\mathbb{R}^m\times\mathbb{S}^{n}$ be a solution to the following system: 
\begin{equation*} 
	\left\{
	\begin{aligned}
		&h'(\overline x)^*\dxi+\overline\mB^*\widetilde{\Delta\Gamma}=0,\\
		&\widetilde{\Delta\Gamma}_{\beta\beta}=0,\quad \widetilde{\Delta\Gamma}_{\alpha\alpha}=0, \quad \widetilde{\Delta\Gamma}_{\alpha\beta}=0,\quad \widetilde{\Delta\Gamma}_{\alpha\gamma}=0,
	\end{aligned}
	\right.
\end{equation*}
where $\widetilde{\Delta\Gamma}=\overline{P}^T\Delta\Gamma\overline{P}$. Then, we know from 
\eqref{eq:UandV} and Proposition \ref{prop:dif-B-F} that  
\begin{align} \label{eq:any-U}
	\overline{\cal U}(0,\dxi,\dGG)=
	\begin{bmatrix}
		h'(\overline x)^*\dxi+\overline\mB^*\widetilde{\Delta\Gamma} \\
		0\\
		\overline{P} 
		\begin{bmatrix}
			\widetilde{\Delta\Gamma}_{\alpha\alpha}&\widetilde{\Delta\Gamma}_{\alpha\beta}&\overline\Xi_{\alpha\gamma}\circ\widetilde{\Delta\Gamma}_{\alpha\gamma} \\
			\widetilde{\Delta\Gamma}_{\beta\alpha}&{\cal W}(\widetilde{\Delta\Gamma}_{\beta\beta})&0 \\
			\overline\Xi_{\gamma\alpha}\circ\widetilde{\Delta\Gamma}_{\gamma\alpha}&0&0 
		\end{bmatrix}
		\overline{P}^T
	\end{bmatrix}=0,
\end{align}
where ${\cal W}\in\partial_C\Pi_{\Sbb^{|\beta|}_+}(0)$. Since  $\overline{\cal U}$ is nonsingular, we obtain that $(\dxi,\Delta\Gamma)=0$. Therefore, we know from \eqref{eq:equal:W-SRCQ} in Lemma \ref{lemma:W-SRCQ-and-CN} (i) that the W-SRCQ holds at $(\overline x,\overline \xi,\overline \Gamma)$.

\noindent {\bf (ii)} We only need to show that the W-SOC (Definition \eqref{def:W-SOC}) holds at $(\overline x,\overline \xi,\overline\Gamma)$. Suppose on the contrary that the W-SOC does not hold. Since $\nabla_{xx}^2L(\overline x,\overline \xi,\overline\Gamma)$ is positive semidefinite, by Lemma \ref{lemma:W-SOC-vio}, there exists $\dxx\neq 0$ such that \eqref{eq:WSOC_fail} holds. 
It, together with \eqref{eq:any-U}, implies that
\begin{align}\nn
	\overline{\cal U}(\dxx,0,0)=
	\begin{bmatrix}
		\nabla_{xx}^2L(\overline x,\overline \xi,\overline\Gamma)\dxx \\
		h'(\overline x)\dxx \\
		-\overline{P}(\overline\mB\dxx)\overline{P}^T+\overline{P}
		\begin{bmatrix}
			(\overline\mB\dxx)_{\alpha\alpha}&(\overline\mB\dxx)_{\alpha\beta}&\overline\Xi_{\alpha\gamma}\circ(\overline\mB\dxx)_{\alpha\gamma} \\
			(\overline\mB\dxx)_{\beta\alpha}&W((\overline\mB\dxx)_{\beta\beta})&0 \\
			\overline\Xi_{\gamma\alpha}\circ(\overline\mB\dxx)_{\gamma\alpha}&0&0 
		\end{bmatrix}\overline{P}^T
	\end{bmatrix}=0,
\end{align}
which contradicts with the nonsingularity of $\overline{\cal U}$. The proof is then completed. $\hfil \Box$
\end{proof}

\section{The primal-dual characterizations for convex quadratic semidefinite programming}\label{section:cha}
	In this section, our objective is to investigate the primal-dual characterizations of the W-SRCQ (Definition \ref{def:W-SRCQ}) and W-SOC  (Definition \ref{def:W-SOC}) for the following convex quadratic semidefinite programming (QSDP):
\begin{equation}\label{prog:P-CP}
\begin{array}{cl}
\min\quad        & \displaystyle\frac{1}{2}\la x,\mQ x\ra +\la c,x\ra	 \\
\mbox{s.t.}\quad & \mH x-p=0, \\
& \mG x-q\in\Sbb^n_+ , 
\end{array}
\end{equation}
where $c\in \mX$, $p\in\RR^m$, $q\in\Sbb^n$, the self-adjoint linear operator  $\mQ:\mX\to\mX$ is  positive semi-definite, and $\mH:\mX\to\RR^m$ and $\mG:\mX\to\Sbb^n$ are two given linear operators. The dual problem (in the restricted Wolfe sense \cite{wolfe1961duality,li2018qsdpnal}) of the QSDP \eqref{prog:P-CP} takes the form:
\begin{equation}\label{prog:D-CP}
	\begin{array}{cl}
		\displaystyle\min_{\omega\in \mW,\xi\in\RR^m,\Gamma\in\Sbb^n}\quad& \displaystyle\frac{1}{2}\la \omega,\mQ \omega\ra +\la p,\xi\ra+\la q,\Gamma\ra \\
		\mbox{s.t.}\quad & -\mQ \omega-c-\mH^*\xi-\mG^*\Gamma=0, \\
		& \Gamma\in \Sbb^n_-, 
	\end{array}
\end{equation}
where $\mW\subseteq \mX$ is a linear subspace containing $\tIm(\mQ)$. Thus, the KKT system of \eqref{prog:D-CP} is given by
\begin{align}
\left\{ 
\begin{aligned}
&\begin{bmatrix}
\mQ\omega \\
p \\
q
\end{bmatrix}+
\begin{bmatrix}
-\mQ \\
-\mH \\
-\mG
\end{bmatrix}\tau+
\begin{bmatrix}
0 \\
0\\
\mI
\end{bmatrix}\Upsilon=0, \\
&\mQ \omega+c+\mH^*\xi+\mG^*\Gamma=0, \\
&\Upsilon\in \mathcal{N}_{\Sbb^n_-}(\Gamma), \\
&\omega\in \mW,\quad  \xi\in\RR^m,\quad  \Gamma\in\Sbb^n,\quad  \tau\in\mX,\quad  \Upsilon\in\Sbb^n.\\
\end{aligned}
\right. \label{KKT:prog:D-CP}
\end{align}

\begin{proposition}\label{prop:pd-opc}
	Consider the convex quadratic semidefinite programming \eqref{prog:P-CP} and its dual problem \eqref{prog:D-CP}.
\begin{itemize}
\item[(i)] Suppose that $\mW=\mX$. Let $\overline{x}\in\mathbb{X}$ be a stationary point of the convex quadratic semidefinite programming \eqref{prog:P-CP} with a Lagrange multiplier $(\overline\xi,\overline\Gamma)\in M(\overline{x})$. The primal W-SOC holds at $(\overline x,\overline \xi,\overline\Gamma)$ if and only if the dual W-SRCQ holds at $(\overline x,\overline \xi,\overline\Gamma,\overline x,\mG \overline x-q)$.

\item[(ii)] Suppose that $\mW=\tIm(\mQ)$. Let $(\overline \omega,\overline \xi,\overline \Gamma)\in\mW\times\RR^m\times\Sbb^n$ be a stationary point of the dual problem  \eqref{prog:D-CP} with a Lagrange multiplier $(\overline x,\overline\Upsilon)\in M(\overline \omega,\overline \xi,\overline \Gamma)$. The dual W-SOC holds at $(\overline \omega,\overline \xi,\overline \Gamma,\overline x,\overline\Upsilon)$ if and only if the primal W-SRCQ holds at $(\overline x,\overline \xi,\overline \Gamma)$.
\end{itemize}
\end{proposition}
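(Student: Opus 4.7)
The plan is to translate both W-SOC and W-SRCQ into explicit algebraic implications expressed in the common eigenvector basis $\overline P$ of $A = g(\overline x)+\overline\Gamma$. Since $g(\overline x)$ and $\overline\Gamma$ commute by complementarity, $\overline P$ is equally well the eigenvector basis for the dual's analog $A_D$. The two principal tools are Lemma \ref{lemma:W-SRCQ-and-CN}(i), which reformulates W-SRCQ as an adjoint injectivity condition, and formula \eqref{eq:sigma}, which makes the $\sigma$-term in W-SOC explicit. Convexity of $\mQ$ is crucial: $\la \dxx,\mQ\dxx\ra \ge 0$, and by \eqref{eq:sigma} the quantity $-\sigma(\overline\Gamma,\mT^2_{\Sbb^n_+}(g(\overline x),\mG\dxx)) = 2\sum_{i\in\alpha,j\in\gamma}(-\overline\lambda_j/\overline\lambda_i)\overline{\mB}(\dxx)_{ij}^2 \ge 0$, so the two terms in W-SOC vanish simultaneously if and only if $\mQ\dxx = 0$ and $\overline{\mB}(\dxx)_{\alpha\gamma} = 0$. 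Hence W-SOC is equivalent to these vanishing conditions, together with the constraints defining the appl set, forcing the test vector to be zero.

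For part (i), using \eqref{eq:linC} the primal W-SOC reduces to the implication
\begin{equation*}
\mH\dxx = 0,\ \mQ\dxx = 0,\ \overline{\mB}(\dxx)_{\alpha\gamma} = \overline{\mB}(\dxx)_{\beta\beta} = \overline{\mB}(\dxx)_{\beta\gamma} = \overline{\mB}(\dxx)_{\gamma\gamma} = 0 \ \Longrightarrow\ \dxx = 0.
\end{equation*}
On the dual side, I would reformulate the inequality $\Gamma \in \Sbb^n_-$ as $-\Gamma \in \Sbb^n_+$ with associated multiplier $-\overline\Upsilon \in \Sbb^n_-$, so that the dual analog of $A$ becomes $A_D = -\overline\Gamma - \overline\Upsilon$; this shares the eigenvector basis $\overline P$ but has index sets $(\alpha_D,\beta_D,\gamma_D) = (\gamma,\beta,\alpha)$. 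Applying Lemma \ref{lemma:W-SRCQ-and-CN}(i) to the reformulated dual, the adjoint equation yields $\mQ\Delta\tau = 0$ --- here the hypothesis $\mW = \mX$ is essential so that the adjoint of $\mQ$ restricted to $\mW$ equals $\mQ$ --- together with $\mH\Delta\tau = 0$ and $\widetilde{\Delta\Upsilon} = -\overline{\mB}(\Delta\tau)$. The four zero-block conditions of the lemma at $(\alpha_D\alpha_D, \alpha_D\beta_D, \alpha_D\gamma_D, \beta_D\beta_D) = (\gamma\gamma, \gamma\beta, \gamma\alpha, \beta\beta)$ translate, via transpose symmetry, precisely to $\overline{\mB}(\Delta\tau)_{\alpha\gamma} = \overline{\mB}(\Delta\tau)_{\beta\gamma} = \overline{\mB}(\Delta\tau)_{\gamma\gamma} = \overline{\mB}(\Delta\tau)_{\beta\beta} = 0$, matching the primal W-SOC implication term by term.

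For part (ii), the primal W-SRCQ is directly given by Lemma \ref{lemma:W-SRCQ-and-CN}(i): $\mH^*\dxi + \mG^*\dGG = 0$ together with $\widetilde{\dGG}_{\alpha\alpha} = \widetilde{\dGG}_{\alpha\beta} = \widetilde{\dGG}_{\alpha\gamma} = \widetilde{\dGG}_{\beta\beta} = 0$ imply $(\dxi,\dGG) = 0$. For the dual W-SOC, the Hessian of the dual Lagrangian is block-diagonal with only the $\omega\omega$-block equal to $\mQ$, so the quadratic form is $\la \Delta\omega, \mQ\Delta\omega\ra \ge 0$. A parallel computation using \eqref{eq:sigma} after the sign reformulation yields $\sigma(\overline\Upsilon,\mT^2_{\Sbb^n_-}(\overline\Gamma,\dGG)) = 2\sum_{i\in\alpha,j\in\gamma}(\overline\lambda_i/\overline\lambda_j)\widetilde{\dGG}_{ij}^2 \le 0$. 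Computing $\lin(\mT_{\Sbb^n_-}(\overline\Gamma) \cap \overline\Upsilon^\perp)$ directly, the dual's appl set $\appl_D$ is characterized by $-\mQ\Delta\omega - \mH^*\dxi - \mG^*\dGG = 0$ and $\widetilde{\dGG}_{\alpha\alpha} = \widetilde{\dGG}_{\alpha\beta} = \widetilde{\dGG}_{\beta\beta} = 0$. By the same convexity argument the dual W-SOC is equivalent to: within $\appl_D$, $\mQ\Delta\omega = 0$ and $\widetilde{\dGG}_{\alpha\gamma} = 0$ force $(\Delta\omega,\dxi,\dGG) = 0$. The assumption $\mW = \tIm(\mQ)$ combined with self-adjointness of $\mQ$ yields $\mW \cap \Null(\mQ) = \{0\}$, so $\Delta\omega = 0$; the remaining constraints on $(\dxi,\dGG)$ reduce to exactly the primal W-SRCQ implication.

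The principal technical obstacle is sign and index-set bookkeeping: the dual inequality $\Gamma \in \Sbb^n_-$ has opposite sign from the primal's $g(x) \in \Sbb^n_+$, so the spectral decomposition of $A_D = -\overline\Gamma - \overline\Upsilon$ swaps the primal's $\alpha$ and $\gamma$ index sets. This swap, combined with the transpose symmetry $\widetilde M_{ji} = \widetilde M_{ij}$, is exactly what makes the $\alpha\gamma$-block condition of the primal W-SOC align with the block condition arising from the dual W-SRCQ. The choice of $\mW$ in each part is equally essential for eliminating the $\omega$-component cleanly; once these conventions are tracked correctly, the remaining steps are routine linear algebra.
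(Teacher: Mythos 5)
Your proposal is correct and follows essentially the same route as the paper: reduce the W-SOC to a linear implication using convexity of $\mQ$ and the sign of the $\sigma$-term from \eqref{eq:sigma}, reduce the dual (resp.\ primal) W-SRCQ to its adjoint/null-space form, and match the two implications block by block; your invocation of Lemma \ref{lemma:W-SRCQ-and-CN}(i) on the negated dual is exactly the orthogonal-complement computation the paper carries out directly in \eqref{eq:d-w-srcq}--\eqref{eq:pd-dualWSRCQ}, and your index-swap bookkeeping is handled correctly. The only minor quibble is that the hypothesis $\mW=\mX$ in part (i) is really needed to make $\overline\omega=\overline x$ an admissible dual point (so that $(\overline x,\overline\xi,\overline\Gamma,\overline x,\mG\overline x-q)$ is a KKT point of \eqref{prog:D-CP}), not to identify the adjoint of $\mQ$ restricted to $\mW$ with $\mQ$, which already holds for $\mW=\tIm(\mQ)$.
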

\begin{proof} We demonstrate (i) here, as (ii) follows from it with only minor modifications.
	
Let $A=g(\overline{x})+\overline{\Gamma} = {\cal G}\overline x - q + \overline\Gamma$ satisfy the eigenvalue decomposition \eqref{eq:eig-decomp-SDP} with $\overline{P}\in{\cal O}^n(A)$, the index sets $\alpha$, $\beta$ and $\gamma$ defined by \eqref{eq:index}, and $\overline{\cal B}:\mathbb{X}\to \mathbb{S}^n$ defined by \eqref{eq:def-B-map}. By the convexity and Lemma \ref{lemma:W-SOC-vio}, we know that the primal W-SOC holds at $(\overline x,\overline \xi,\overline{\Gamma})$ if and only if
\begin{align}
\left\{
\begin{aligned}
&\mQ \dxx=0, \quad \mH \dxx=0,\\
&\overline\mB(\dxx)_{\alpha\gamma}=0, \quad \overline\mB(\dxx)_{\beta\beta}= 0,\quad \overline\mB(\dxx)_{\beta\gamma}= 0, \quad \overline\mB(\dxx)_{\gamma\gamma}= 0  
\end{aligned}
\right.
\quad \Longrightarrow \quad 
\dxx=0. \label{eq:pd-prithe W-SOC}
\end{align}

	{\tvio On the other hand}, it is obvious that $(\overline x,\overline \xi,\overline\Gamma,\overline x,\mG \overline x-q)$ is a KKT point of \eqref{prog:D-CP} by \eqref{KKT:prog:D-CP}. By Definition  \ref{def:W-SRCQ}, we know that the W-SRCQ of the dual problem \eqref{prog:D-CP} at $(\overline x,\overline \xi,\overline\Gamma,\overline x,\mG \overline x-q)$ takes the form: 
\begin{align}\label{eq:d-w-srcq}
\begin{bmatrix}
 \mQ &\mH^* & \mG^* \\
0 & 0 & \mI
\end{bmatrix}
\begin{bmatrix}
\mX\\ \RR^m \\ \Sbb^n
\end{bmatrix}+
\begin{bmatrix}
\{0\} \\ 
\aff(\mT_{\Sbb^n_-}(\overline\Gamma)\cap [\mG \overline x-q]^{\perp})
\end{bmatrix}
=
\begin{bmatrix}
\mX \\
\Sbb^n
\end{bmatrix}.
\end{align}
Since $\aff(\mT_{\Sbb^n_-}(\overline\Gamma)\cap [\mG \overline x-q]^{\perp})=\{\overline{P}B\overline{P}^T\in\Sbb^n\ |\ B_{\alpha\beta}=0,\ B_{\alpha\alpha}= 0\}$, we know that \eqref{eq:d-w-srcq} is equivalent to 
\begin{align}\label{eq:pd-dualWSRCQ}
\Null \Big(\begin{bmatrix}
\mQ & 0\\ \mH& 0\\ \overline\mB&\mI
\end{bmatrix} \Big)
\bigcap
\begin{bmatrix}
\mX \\
\{B\in\Sbb^n\ |\ B_{\alpha\beta}=0,\ B_{\alpha\alpha}= 0\}^{\perp}
\end{bmatrix}
=\{0\}.
\end{align}
Meanwhile, it is not difficult to observe that  \eqref{eq:pd-dualWSRCQ} and \eqref{eq:pd-prithe W-SOC} are equivalent. Consequently, the primal W-SOC is satisfied at $(\overline x, \overline \xi,\overline\Gamma)$ if and only if the dual W-SRCQ is satisfied at $(\overline x, \overline \xi,\overline\Gamma,\overline x, \mG \overline x - q)$. This completes the proof. $\hfill \Box$
\end{proof}

\begin{remark}
	By employing a similar argument, we are able to obtain the primal-dual characterizations of the S-SOSC (Definition \ref{def:S-SOSC}) and the constraint nondegeneracy (Definition \ref{def:constraint nondegeneracy}) for the QSDP \eqref{prog:P-CP}. This extends the corresponding results obtained in Chan and Sun \cite{chan2008constraint} for the linear SDP. Also,  for the QSDP \eqref{prog:P-CP}, a primal-dual characterizations of the SRCQ (Definition \ref{def:SRCQ}) and SOSC (Definition \ref{def:SOSC}) is established by \cite{han2018linear}.
\end{remark}

\section{A semismooth Newton method with correction}\label{section:5}

In this section, based on the theoretical results obtained in previous sections, we introduce a new semismooth Newton method with a correction step, which enjoys the locally quadratically convergence rate even without the generalized Jacobian regularity.
   
Denote $\mathbb{Z}:=\mathbb{X}\times\mathbb{R}^m\times\mathbb{S}^n$. Let $\delta>0$ be some given constant. Define the correction mapping ${\cal P}_{\delta}: \mathbb{Z}\to \mathbb{Z}$ by 
\begin{equation}\label{eq:def-pi-mapping}
{\cal P}_{\delta}(Z):=\big({x},{\xi},{\Gamma}-\sum_{i\in \theta} \lambda_i({A}){P}_{i}{P}_i^T\big), \quad Z=({x},{\xi},{\Gamma})\in\mathbb{Z},
\end{equation}
where $ {A}:=g( x)+ \Gamma$  has the eigenvalue decomposition \eqref{eq:eig-decomp-SDP} with ${P}\in{\cal O}^n({A})$ and $\theta:=\{i\mid |\lambda_i({A})|\le \delta\}.$

Given $Z=(x,\xi,\Gamma)\in \mathbb{Z}$, define two elements ${\cal U}_0$ and ${\cal U}_{\cal I}\in \partial_BF(Z)$ at $Z$ similarly as those in \eqref{eq:U0} and \eqref{eq:U1}, i.e., for any $(\dxx,\dxi,\dGG)\in\mathbb{Z}$,
{\tvio\begin{align}\label{eq:Uk}
{\cal U}_{l}(\dxx,\dxi,\dGG) =
\begin{bmatrix}
\nabla^2_{xx}L(x,\xi,\Gamma)\dxx+ h'(x)^*\dxi+ g'(x)^*\dGG\\
 h'(x)\dxx \\
- g'(x)\dxx+{\cal V}_{l}(g'(x)\dxx+\dGG)
\end{bmatrix}  ,\quad l\in\{0,\mI\}
\end{align}}
where ${\cal V}_0$ and ${\cal V}_{\cal I}\in \partial_B\Pi_{\Sbb^m_+}(A)$ is given by \eqref{eq:exactV0} and \eqref{eq:exactV1} in Proposition \ref{prop:dif-B-F}, respectively. The following proposition addresses the legitimacy of the correction mapping. It essentially shows that under weaker regularity conditions, when $\widehat{Z}$ is sufficiently close to $\overline{Z}$, the corresponding ${\cal U}_{0}$ (or ${\cal U}_{\cal I}$) in the Bouligand generalized Jacobian of $F$ at the corrected point $Z = {\cal P}_{\delta}(\widehat Z)$ with properly chosen $\delta$ is nonsingular and its inverse is uniformly bounded. As one can see later, this will be crucial for the local convergence analysis of the proposed semismooth Newton algorithm.

\begin{proposition}\label{prop:nons} 
Let $\overline{x}\in\mathbb{X}$ be a stationary point of the NLSDP \eqref{prog:NLSDP} with a Lagrange multiplier $(\overline\xi,\overline\Gamma)\in M(\overline{x})$. {Let $\delta$ be any fixed constant in $(0, \min(\overline\lambda_\alpha, |\overline\lambda_\gamma|))$.}
\begin{itemize}
\item[(i)] If the W-SOC (Definition \ref{def:W-SOC}) and constraint nondegeneracy (Definition \ref{def:constraint nondegeneracy}) hold at $\overline{Z}=(\overline{x},\overline\xi,\overline\Gamma)\in\mathbb{Z}$, then there exists $\kappa_{0}>0$ such that for any $\widehat{Z}=(\widehat{x},\widehat{\xi},\widehat\Gamma)$ sufficient close to $\overline{Z}$ and $Z={\cal P}_{\delta}(\widehat{Z})$, the corresponding ${\cal U}_{0}\in\partial_B F(Z)$ defined by \eqref{eq:Uk} with respect to $Z$ is nonsingular and
\begin{equation}\label{eq:Uni-bound-U0}
\kappa_{0}^{-1}\|\Delta Z\| \le \|{\cal U}_{0}(\Delta Z)\|\quad \forall\,\Delta Z\in\mathbb{Z}.
\end{equation}

\item[(ii)] If the S-SOSC (Definition \ref{def:S-SOSC}) and W-SRCQ  (Definition \ref{def:W-SRCQ}) hold at $\overline{Z}=(\overline{x},\overline\xi,\overline\Gamma)\in\mathbb{Z}$, then there exists $\kappa_{1}>0$ such that for any $\widehat{Z}=(\widehat{x},\widehat{\xi},\widehat\Gamma)$ sufficient close to $\overline{Z}$ and $Z={\cal P}_{\delta}(\widehat{Z})$, the corresponding ${\cal U}_{\cal I}\in\partial_B F(Z)$ defined by \eqref{eq:Uk} with respect to $Z$ is nonsingular and
\begin{equation}\label{eq:Uni-bound-U1}
\kappa_{1}^{-1}\|\Delta Z\| \le \|{\cal U}_{\cal I}(\Delta Z)\|\quad \forall\,\Delta Z\in\mathbb{Z}.
\end{equation}
\end{itemize} 
\end{proposition}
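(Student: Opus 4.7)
The plan is to leverage Theorem \ref{theorem:bar-N-SDP}, which under the hypotheses of (i) or (ii) guarantees the nonsingularity of $\overline{\cal U}_0$ or $\overline{\cal U}_{\mI}$ built at $\overline Z$, and then transfer this to the corrected point $Z = {\cal P}_\delta(\widehat Z)$ by an operator-norm perturbation argument. The correction mapping is precisely engineered so that, for $\widehat Z$ close to $\overline Z$, the corrected matrix $A^{new} = \widehat A - \sum_{i\in\widehat\theta}\lambda_i(\widehat A)\widehat P_i \widehat P_i^T$ with $\widehat A := g(\widehat x) + \widehat\Gamma$ exhibits exactly the same pattern of strictly positive, zero, and strictly negative eigenvalues as $\overline A := g(\overline x) + \overline\Gamma$. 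This structural freezing of the index partition is what overcomes the non-differentiability of $\Pi_{\Sbb^n_+}$ at $\overline A$.

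The first step is to make this preservation precise. By Weyl's inequality and the assumption $\delta \in (0,\min(\overline\lambda_\alpha,|\overline\lambda_\gamma|))$, there is a neighborhood of $\overline Z$ in which the $|\overline\alpha|$ largest eigenvalues of $\widehat A$ exceed $\delta$, the next $|\overline\beta|$ eigenvalues lie in $(-\delta,\delta)$, and the last $|\overline\gamma|$ are below $-\delta$. Consequently $\widehat\theta = \overline\beta$, and writing $\widehat A = \widehat P\,\Lambda(\widehat A)\,\widehat P^T$ with $\widehat P \in {\cal O}^n(\widehat A)$, one obtains $A^{new} = \widehat P\,\Lambda^{new}\,\widehat P^T$, where $\Lambda^{new}$ differs from $\Lambda(\widehat A)$ only by zeroing the $\widehat\theta$-entries. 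In particular $\Lambda^{new}$ is still nonincreasing, so $\alpha(A^{new}) = \overline\alpha$, $\beta(A^{new}) = \overline\beta$, $\gamma(A^{new}) = \overline\gamma$, and $\widehat P \in {\cal O}^n(A^{new})$; moreover $\|A^{new}-\overline A\| = O(\|\widehat Z - \overline Z\|)$ by a second application of Weyl's inequality to control the removed eigenvalues.

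Next I invoke Proposition \ref{prop:dis-P} to select $\widehat P$ so that $\|\widehat P - \overline P\| = O(\|\widehat A - \overline A\|) = O(\|\widehat Z - \overline Z\|)$ for some $\overline P \in {\cal O}^n(\overline A)$. Since the eigenvalues of $A^{new}$ also converge to those of $\overline A$, the matrix $\Xi^{new}$ of \eqref{eq:Xi} at $A^{new}$ converges to $\overline\Xi$ at $\overline A$. Plugging $\widehat P$ and $\Xi^{new}$ into the explicit formula \eqref{eq:exactV0} (resp. \eqref{eq:exactV1}) and comparing with $\overline{\cal V}_0$ (resp. $\overline{\cal V}_{\mI}$) built from $\overline P$ and $\overline\Xi$, every block-wise term differs by quantities of order $\|\widehat P - \overline P\| + \|\Xi^{new} - \overline\Xi\| = O(\|\widehat Z - \overline Z\|)$; hence ${\cal V}_0 \to \overline{\cal V}_0$ (resp. ${\cal V}_{\mI} \to \overline{\cal V}_{\mI}$) in operator norm. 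Combined with the continuity of $\nabla^2_{xx}L$, $h'$, and $g'$, formula \eqref{eq:Uk} yields operator-norm convergence of ${\cal U}_0$ (resp. ${\cal U}_{\mI}$) at $Z$ to $\overline{\cal U}_0$ (resp. $\overline{\cal U}_{\mI}$) at $\overline Z$.

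The final step is routine: since $\overline{\cal U}_0$ (resp. $\overline{\cal U}_{\mI}$) is nonsingular on the finite-dimensional space $\mathbb{Z}$ by Theorem \ref{theorem:bar-N-SDP}, its smallest singular value $1/\kappa^*$ is strictly positive, and a standard perturbation argument delivers the nonsingularity of ${\cal U}_0$ (resp. ${\cal U}_{\mI}$) at $Z$ together with the bound \eqref{eq:Uni-bound-U0} (resp. \eqref{eq:Uni-bound-U1}) with, e.g., $\kappa_0 = \kappa_1 = 2\kappa^*$, once $\widehat Z$ is sufficiently close to $\overline Z$. The main obstacle lies in the third paragraph: because eigenvectors of symmetric matrices are only upper Lipschitz continuous in the data, one cannot compare ${\cal V}_0$ at $A^{new}$ with $\overline{\cal V}_0$ at $\overline A$ via naive continuity, and Proposition \ref{prop:dis-P} is indispensable to align $\widehat P$ with a suitable $\overline P$. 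The whole role of the correction is precisely to stabilize the index partition $(\alpha,\beta,\gamma)$ so that this upper-Lipschitz estimate propagates to genuine operator-norm convergence of the chosen Bouligand element.
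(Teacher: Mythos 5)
Your proof is correct and follows essentially the same route as the paper's: both exploit the choice $\delta\in(0,\min(\overline\lambda_\alpha,|\overline\lambda_\gamma|))$ to freeze the index partition $(\alpha,\beta,\gamma)$ at the corrected point, use Proposition \ref{prop:dis-P} together with the fact (end of Proposition \ref{prop:dif-B-F}) that $\overline{\cal V}_0$ and $\overline{\cal V}_{\mI}$ are independent of the choice of $P\in{\cal O}^n(\overline A)$ to show that the chosen Bouligand element at $Z={\cal P}_\delta(\widehat Z)$ converges to $\overline{\cal U}_0$ (resp.\ $\overline{\cal U}_{\mI}$), and then invoke Theorem \ref{theorem:bar-N-SDP}. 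The only difference is presentational --- you argue directly via operator-norm convergence and perturbation of the smallest singular value, while the paper runs the contrapositive as a sequential contradiction argument; just be aware that your direct version tacitly relies on that $P$-independence, without which ``${\cal V}_0\to\overline{\cal V}_0$'' would not be convergence to a single well-defined limit.
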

\begin{proof} We focus on the result (i), since (ii) can be obtained similarly. 
	
It suffices to show that the inequality \eqref{eq:Uni-bound-U0} holds, as the nonsingularity of ${\cal U}_{0}$ is a direct implication of \eqref{eq:Uni-bound-U0}. Assume, on the contrary,  there exist a sequence $\{\widehat{Z}^{\nu} \}$ converging to $ \overline{Z}$ and a nonzero sequence $\{\Delta Z^{\nu}=(\dxx^{\nu},\dxi^{\nu},\dGG^{\nu})\}\subseteq \mathbb{Z}$ such that ${\cal U}^{\nu}_{0}\in\partial_B F(Z^{\nu})$ defined by \eqref{eq:Uk} 
satisfy 
\[
\|{\cal U}^{\nu}_{0}(\Delta Z^{\nu})\|< 1/\nu  \|\Delta Z^{\nu}\|. 
\] 
Without loss of generality, we may assume that $\|\Delta Z^{\nu}\| =1$ for each $\nu$ and $\Delta Z^{\nu}\to\Delta Z^{\infty}\neq 0$ as $\nu\to \infty$. For each $\nu$, let  $\widehat{A}^{\nu}=g(\widehat{x}^{\nu})+\widehat\Gamma^{\nu}$ enjoy the eigenvalue decomposition \eqref{eq:eig-decomp-SDP} with $\widehat{P}^{\nu}\in {\cal O}^n(\widehat{A}^{\nu})$. For each $\nu$, denote
\[
\zeta^{\nu}:=\{i\mid \widehat\lambda_i^{\nu}\ge \delta\}, \quad \theta^{\nu}:=\{i\mid |\widehat\lambda_i^{\nu}|\le \delta\}\quad \mbox{and}\quad \kappa^{\nu}:=\{i\mid \widehat\lambda_i^{\nu}\le -\delta\},
\]
where $\widehat{\lambda}^{\nu}:=\lambda(\widehat{A}^{\nu})$ {\tvio are the eigenvalues of} $\widehat{A}^{\nu}$. 
Since $\widehat{A}^\nu\to \overline A = g(\overline{x})+\overline{\Gamma}$ as $\nu \to \infty$, we know from the continuity of eigenvalues \cite{lancaster1964eigenvalues} that $\widehat\lambda^\nu \to \lambda(A)$ as $\nu\to \infty$. Notice that $\delta\in(0, \min(\overline\lambda_\alpha, |\overline\lambda_\gamma|))$, for $\nu$ large enough,
\begin{align}\label{eq:zeta}
\zeta^{\nu}\equiv {\alpha}, \quad  \theta^{\nu}\equiv {\beta}  \quad \mbox{and} \quad   \kappa^{\nu}\equiv {\gamma},
\end{align}
where $ {\alpha}$, $ {\beta}$ and $ {\gamma}$ are the index sets defined by \eqref{eq:eig-de} for $\overline{A}$. Since $Z^{\nu}={\cal P}_{\delta}(\widehat{Z}^{\nu})$, it holds that
\begin{equation*}
	\|Z^{\nu}-\widehat{Z}^{\nu}\|=\|{\cal P}_{\delta}(\widehat{Z}^{\nu})-\widehat{Z}^{\nu}\| =\|\sum_{i\in \beta} \lambda_i(\widehat{A}^{\nu})\widehat{P}_{i}^{\nu}(\widehat{P}_i^{\nu})^T\| 
	\le \|  
	\widehat \lambda_\beta^\nu \| \to 0, \mbox{ as } \nu \to \infty.
\end{equation*}
Thus, we know that $Z^{\nu}\to \overline{Z}$ as $\nu \to \infty$. It also holds that for each $\nu$,
\begin{align}\label{eq:Anu}
A^{\nu}=g(x^{\nu})+\Gamma^{\nu} = \widehat{P}^{\nu} 
\begin{bmatrix}
	\widehat\Lambda_{ {\alpha} {\alpha}}^{\nu} & 0 & 0 \\
	0 & 0 & 0 \\
	0 & 0 & \widehat\Lambda_{ {\gamma} {\gamma}}^{\nu}
\end{bmatrix}
(\widehat{P}^{\nu})^T 
\end{align}
with $\widehat{\Lambda}^{\nu}={\rm Diag}(\widehat\lambda^{\nu})$, and $A^\nu \to \overline A$ as $\nu \to \infty$.

Let $\overline{\cal U}_0$ be given by \eqref{eq:U0} with $\overline{\cal V}_0$ defined by \eqref{eq:exactV0}. By noting that $\|{\cal U}_{0}^{\nu}(\Delta Z^{\nu})\|<1/\nu$, $\|\Delta Z^{\nu}\| =1$, $f$, $h$ and $g$ are {\tvio twice continuously differentiable} and $Z^{\nu}\to\overline{Z}$, we have 
\begin{eqnarray}
	&& \| \overline {\cal U}_{0}(\Delta Z^{\infty})\|\le \| \overline {\cal U}_{0}(\Delta Z^{\infty})-{\cal U}_{0}^{\nu}(\Delta Z^{\nu})\|+\|{\cal U}_{0}^{\nu}(\Delta Z^{\nu})\| \nn \\ [3pt]
	&\le& \| \overline {\cal U}_{0}(\Delta Z^{\infty})-\overline{\cal U}_{0}(\Delta Z^{\nu})\| +\| \overline {\cal U}_{0}(\Delta Z^{\nu})-{\cal U}_{0}^{\nu}(\Delta Z^{\nu})\|+1/\nu \nn \\ [3pt]
	&\le&  O(\| \Delta Z^{\infty}-\Delta Z^{\nu}\| )+\| \overline{\cal V}_{0}(g'(\overline{x})\dxx^{\nu}+\dGG^{\nu}) - {\cal V}_{0}^{\nu}(g'(x^{\nu})\dxx^{\nu}+\dGG^{\nu})\|+1/\nu,\label{eq:U0Uk-diff1}
\end{eqnarray}
where for each $\nu$, ${\cal V}_{0}^{\nu}$ is defined by \eqref{eq:exactV0} with respect to $A^{\nu}$. By Proposition \ref{prop:dis-P} and taking subsequence if necessary, we may further assume that  $\widehat{P}^{\nu}\to P^{\infty}\in \mathcal{O}^n(\overline{A})$. Then, \eqref{eq:Anu} and Proposition \ref{prop:dif-B-F} imply that $\lim_{\nu\to\infty}\mathcal{V}^\nu_0 = \overline{\cal V}_0$.
Since ${\cal V}_{0}^{\nu}$ is uniformly bounded \cite[Proposition 1]{meng_semismoothness_2005}, we thus have as $\nu\to \infty$,
\begin{eqnarray}
&&\| {\cal V}_{0}^{\nu}(g'(x^{\nu})\dxx^{\nu}+\dGG^{\nu})-\overline{\cal V}_{0}(g'(\overline x)\dxx^{\nu}+\dGG^{\nu})\| \nn \\ [3pt]
&\le& \| {\cal V}_{0}^{\nu}(g'(x^{\nu})\dxx^{\nu}+\dGG^{\nu})-{\cal V}_{0}^{\nu}(g'(\overline x)\dxx^{\nu}+\dGG^{\nu})\|  \nn \\ [3pt]
&&+ \| {\cal V}_{0}^{\nu}(g'(\overline x)\dxx^{\nu}+\dGG^{\nu})-\overline{\cal V}_{0}(g'(\overline x)\dxx^{\nu}+\dGG^{\nu})\|  \nn\\ [3pt]
&\le&O(\| (g'(x^{\nu})-g'(\overline x))\dxx^{\nu}\| )+o(\| g'(\overline x)\dxx^{\nu}+\dGG^{\nu}\| ) =o(1).\label{eq:U0Uk-diff2}
\end{eqnarray}
Combining \eqref{eq:U0Uk-diff1} and \eqref{eq:U0Uk-diff2}, we obtain $\| \overline {\cal U}_{0}(\Delta Z^{\infty})\| =0$, which implies that $\overline{\cal U}_{0}$ is singular. {\tvio This contradicts Theorem \ref{theorem:bar-N-SDP} (i)}, since the W-SOC and constraint nondegeneracy are assumed. The proof is then completed.
$\hfill \Box$ 
\end{proof}

Using Proposition \ref{prop:nons}, we can further establish a local error bound for the nonsmooth map $F$ at the corrected points around $\overline{Z}$ in the following proposition.

\begin{proposition}\label{prop:error-bound}
Let $\overline{x}\in\mathbb{X}$ be a stationary point of the NLSDP \eqref{prog:NLSDP} with a Lagrange multiplier $(\overline\xi,\overline\Gamma)\in M(\overline{x})$ and the constant $\delta\in(0, \min(\overline\lambda_\alpha, |\overline\lambda_\gamma|)).$ Then, for any given $\varrho >0$, there exists
a neighborhood $\cal N$ of $\overline Z$ such that
for any $\widehat Z\in {\cal N}$, $Z = {\cal P}_{\delta}(\widehat Z)$, and $U \in \partial_C F(Z)$
\begin{equation}\label{eq:semi-neighborhood0}
	\| F(Z) - F(\overline{Z}) - {\cal U}(Z - \overline{Z}) \| \le \varrho \| Z - \overline{Z} \|.
\end{equation}
{Recall the definitions of constants $\kappa_0$ and $\kappa_1$ in Proposition \ref{prop:nons}.}  It further holds that
\begin{itemize}
\item[(i)] if the W-SOC and the constraint nondegeneracy hold at $\overline Z=(\overline x,\overline\xi,\overline\Gamma)\in\mZ$, then there exists a neighborhood ${\cal N}$ of $\overline{Z}$ such that for any $\widehat Z\in {\cal N}$ and $Z = {\cal P}_\delta(\widehat Z)$, 
\[\| F(Z)\|\ge \frac{1}{2\kappa_{0}}\| Z-\overline Z\|;\]
\item[(ii)] if the S-SOSC and the W-SRCQ hold at $\overline Z=(\overline x,\overline\xi,\overline\Gamma)\in\mZ$, then there exists a neighborhood ${\cal N}$ of $\overline{Z}$ such that for any $\widehat Z\in {\cal N}$ and $Z = {\cal P}_\delta(\widehat Z)$, 
\[\| F(Z)\|\ge \frac{1}{2\kappa_{1}}\| Z-\overline Z\|.\]
\end{itemize}
\end{proposition}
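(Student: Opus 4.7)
The plan is to derive \eqref{eq:semi-neighborhood0} as a direct consequence of the (strong) semismoothness of $F$ at $\overline Z$, and then combine it with the uniform lower bounds supplied by Proposition \ref{prop:nons} to obtain the error bounds (i) and (ii). Since $F$ is assembled from the $C^2$ data $f,h,g$ and from the strongly semismooth projection $\Pi_{\mathbb{S}^n_+}$, the map $F$ is itself (strongly) semismooth at $\overline Z$. By Definition \ref{def:semismooth}, g-semismoothness yields that for every $\varrho>0$ there exists a neighborhood ${\cal N}_0$ of $\overline Z$ such that
\[
\|F(Z)-F(\overline Z)-{\cal U}(Z-\overline Z)\|\le \varrho\|Z-\overline Z\|
\]
for every $Z\in{\cal N}_0$ and every ${\cal U}\in\partial_C F(Z)$.

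To transport this bound to the corrected point $Z={\cal P}_\delta(\widehat Z)$, I first verify that $Z\to\overline Z$ as $\widehat Z\to\overline Z$. For $\widehat Z$ close enough to $\overline Z$, the hypothesis $\delta\in(0,\min(\overline\lambda_\alpha,|\overline\lambda_\gamma|))$ together with the continuity of the eigenvalue map forces $\theta=\{i:|\lambda_i(\widehat A)|\le \delta\}=\beta$, exactly as in the derivation of \eqref{eq:zeta} inside the proof of Proposition \ref{prop:nons}. Since $\overline\lambda_i=0$ for $i\in\beta$ and the eigenvalue map is globally $1$-Lipschitz in the Frobenius norm, we obtain $|\widehat\lambda_i|=O(\|\widehat Z-\overline Z\|)$ for $i\in\beta$, and hence
\[
\|Z-\widehat Z\|=\Bigl\|\sum_{i\in\beta}\widehat\lambda_i\,\widehat P_i\widehat P_i^T\Bigr\|\le \|\widehat\lambda_\beta\|=O(\|\widehat Z-\overline Z\|).
\]
Consequently $\|Z-\overline Z\|=O(\|\widehat Z-\overline Z\|)$, and by shrinking the neighborhood of $\overline Z$ we ensure $Z\in{\cal N}_0$, which establishes \eqref{eq:semi-neighborhood0}.

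For parts (i) and (ii) I combine \eqref{eq:semi-neighborhood0} with Proposition \ref{prop:nons}. Consider (i): under the W-SOC and constraint nondegeneracy, Proposition \ref{prop:nons}(i) provides $\kappa_0>0$ and a neighborhood ${\cal N}_1$ of $\overline Z$ such that, whenever $\widehat Z\in{\cal N}_1$ and $Z={\cal P}_\delta(\widehat Z)$, the associated element ${\cal U}_0\in\partial_B F(Z)\subseteq\partial_C F(Z)$ satisfies $\kappa_0^{-1}\|\Delta Z\|\le \|{\cal U}_0(\Delta Z)\|$ for all $\Delta Z\in\mathbb{Z}$. Applying \eqref{eq:semi-neighborhood0} with $\varrho=1/(2\kappa_0)$ and ${\cal U}={\cal U}_0$ on a neighborhood ${\cal N}\subseteq{\cal N}_0\cap{\cal N}_1$, together with $F(\overline Z)=0$ and the reverse triangle inequality, yields
\[
\|F(Z)\|\ge \|{\cal U}_0(Z-\overline Z)\|-\|F(Z)-F(\overline Z)-{\cal U}_0(Z-\overline Z)\|\ge \frac{1}{\kappa_0}\|Z-\overline Z\|-\frac{1}{2\kappa_0}\|Z-\overline Z\|=\frac{1}{2\kappa_0}\|Z-\overline Z\|.
\]
Part (ii) follows by the identical argument with ${\cal U}_{\cal I}$ and $\kappa_1$ in place of ${\cal U}_0$ and $\kappa_0$, invoking Proposition \ref{prop:nons}(ii).

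The argument is essentially a clean concatenation of the semismoothness of $F$ at $\overline Z$ with the uniform invertibility of Proposition \ref{prop:nons}, and no substantive obstacle is anticipated. The only point requiring care is the bookkeeping of neighborhoods: the corrected point $Z$ must simultaneously sit inside the semismoothness neighborhood ${\cal N}_0$ (to trigger \eqref{eq:semi-neighborhood0}) and inside the region ${\cal N}_1$ where Proposition \ref{prop:nons} applies. This is handled automatically by the bound $\|Z-\overline Z\|=O(\|\widehat Z-\overline Z\|)$ above, which lets us shrink the neighborhood of $\overline Z$ enough to control both effects at once.
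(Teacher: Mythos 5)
Your proof is correct and follows essentially the same route as the paper's: establish \eqref{eq:semi-neighborhood0} from the semismoothness of $F$ at $\overline Z$, show the corrected point satisfies $\|Z-\overline Z\|=O(\|\widehat Z-\overline Z\|)$ via the index-set identification and the Lipschitz continuity of the eigenvalue map, and then derive (i) and (ii) from Proposition \ref{prop:nons} by the reverse triangle inequality. No gaps.
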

\begin{proof}
	
By the semismoothness of $\Pi_{\Sbb^n_+}(\cdot)$ on $\Sbb^n$ and the twice continuous differentiability of $f$, $g$ and $h$, we know $F$ is semismooth at $\overline Z$. This implies that for any given $\varrho >0$, we can find a neighborhood ${\cal N}_1$ of $\overline Z$  such that for any $Z\in {\cal N}_1$ and $U\in \partial_C F(Z)$
\begin{equation}\label{eq:semi-neighborhood-1}
	\| F(Z) - F(\overline{Z}) - {\cal U}(Z - \overline{Z}) \|	\le \varrho \| Z - \overline{Z} \|.
\end{equation}	
Recall the eigenvalue decomposition of $\overline A = g(\overline x) + \overline{\Gamma}$ in \eqref{eq:eig-decomp-SDP} with index sets $\alpha, \beta$ and $\gamma$ defined in \eqref{eq:index}. For any $\widehat Z$ with $Z = {\cal P}_{\delta}(\widehat Z)$, let $\widehat A :=  g(\widehat{x}) + \widehat{\Gamma}$ satisfy the {\tvio eigenvalue decomposition} \eqref{eq:eig-de} with $\widehat{P}\in {\cal O}^n(\widehat{A})$ and $\widehat{\Lambda} = \Lambda(\widehat{A})$. Then, similar to \eqref{eq:zeta} and \eqref{eq:Anu}, there exists a neighborhood ${\cal N}$ of $\overline Z$ such that for any $\widehat Z\in \cal N$
\begin{align}\nn
	A := g(x)+\Gamma = \widehat{P} 
	\begin{bmatrix}
		\widehat\Lambda_{ {\alpha} {\alpha}} & 0 & 0 \\
		0 & 0 & 0 \\
		0 & 0 & \widehat\Lambda_{ {\gamma} {\gamma}}
	\end{bmatrix}
	(\widehat{P})^T.
\end{align}
Thus,
\begin{align}
	\| Z -\overline Z\| \le & \| Z -\widehat Z\|+\| \widehat Z - \overline Z\|  \le\| \widehat\Lambda_{\beta\beta}\|+\| \widehat Z-\overline Z\| 
	\le  \|\lambda(\widehat A) - \lambda(\overline A) \| + \| \widehat Z - \overline Z\|  \nn\\
	\le& \| g(\widehat{x}) + \widehat \Gamma - g(\overline x) - \overline \Gamma \| + \| \widehat Z - \overline Z\|   
	\le  { (L_g+2) \| \widehat Z - \overline Z\|},  \label{eq:ZZhatbound}
\end{align}
where the last inequality is due to the Lipschitz continuity of $g$ over $\cal N$. From \eqref{eq:ZZhatbound}, by shrinking ${\cal N}$ if necessary, we can assert that $Z = P_\delta(\widehat Z) \in {\cal N}_1$ for any $\widehat Z\in {\cal N}$. This, together with \eqref{eq:semi-neighborhood-1}, implies the desired inequality \eqref{eq:semi-neighborhood0}.

Next, we focus on the rest results (i) and (ii). Here, only the proof of (i) is provided, as (ii) can be proved via similar arguments. 

From \eqref{eq:semi-neighborhood0} and Proposition \ref{prop:nons} (i), we know that there exists a neighborhood $\cal N$ of $\overline{Z}$ such that for any $\widehat Z \in {\cal N}$ and $Z = {\cal P}_\delta(\widehat Z)$, 
\begin{align} \nn
\| F(Z)-{\cal U}_{0}(Z-\overline Z)\|\le \frac{1}{2\kappa_{0}}\| Z-\overline Z\| \quad \mbox{ and } \quad  \| {\cal U}_{0}\|\ge \frac{1}{\kappa_0} ,
\end{align}
where $\mathcal{U}_{0}\in \partial_BF(Z)$ is given in \eqref{eq:Uk}. Thus,
\begin{align}\nn
\|F(Z)\|\ge \|{\cal U}_{0}(Z-\overline Z)\|-\| F(Z)-{\cal U}_{0}(Z-\overline Z)\|\ge  \frac{1}{\kappa_{0}}\|Z-\overline Z\|-\frac{1}{2\kappa_{0}}\| Z-\overline Z\|=\frac{1}{2\kappa_{0}}\| Z-\overline Z\|.
\end{align}
The proof is completed.
$\hfill \Box$ 
\end{proof}

Now we are ready to present our inexact semismooth Newton method with correction in Algorithm \ref{algorithm:ASN} for solving the nonsmooth system $F(Z) = 0$ with $F$ given in \eqref{eq:SDP-natural-mapping}. 
As one can observe, the algorithm is quite similar to classical inexact semismooth Newton method \cite{facchinei2003finite}. The main difference is that in each iteration, an extra correction step, i.e., step 6 in Algorithm \ref{algorithm:ASN}, involving the correction mapping ${\cal P}_{\delta}$ is taken. This correction step, based on Proposition \ref{prop:nons}, ensures locally the nonsingularity of ${\cal U}_0^k$ (or ${\cal U}_{\cal I}^k$), and hence the well-definedness of the proposed algorithm. Meanwhile, it is worth noting that the additional computational costs brought by the correction step are negligible, as the eigendecompostion of $\widehat{A}^{k+1} = g(\widehat x^{k+1}) + \widehat{\Gamma}^{k+1}$ in step 6 can be reused to obtain ${\cal U}^{k+1} \in \partial_B F(Z^{k+1})$ in step 3. Moreover, the inexact computation in step 4 indicates that our proposed algorithm has has the potential for solving large-scale problems.

\begin{algorithm}
\caption{The semismooth Newton method with correction using ${\cal U}_0$ (or ${\cal U}_{\mI}$)}
\label{algorithm:ASN}
\begin{algorithmic}[1]
\State{Initialize $\widehat{Z}^{0}=(\widehat{x}^0,\widehat \xi^0,\widehat\Gamma^0)$, $\delta>0$, $\eta \ge 0$, $\tau\in(0,1]$ and $Z^0={\cal P}_{\delta}(\widehat{Z}^{0})$.}
\For{$k=0,1,\ldots$}
\State{Get ${\cal U}^k={\cal U}^k_0 \in \partial_B F(Z^k)$ by \eqref{eq:Uk}  (${\cal U}^k={\cal U}^k_{\mI} \in \partial_B F(Z^k)$ by \eqref{eq:Uk}).}
\State{Find an approximate solution $d^k$ to the linear system 
$
{\cal U}^k d + F(Z^k) = 0
$
such that \[\| {\cal U}^k d^k + F(Z^k) \| \le  {\min(\eta, \|F(Z^k)\|^{\tau})}  \|F(Z^k)\|.\]
}
\State{$\widehat Z^{k+1}=Z^k+d^k$.}
\State{$Z^{k+1}={\cal P}_{\delta}(\widehat Z^{k+1})$.}
\EndFor
\end{algorithmic}
\end{algorithm}

As promised, in the following theorem, we establish the well-definedness and local convergence results of Algorithm \ref{algorithm:ASN} without assuming the generalized Jacobian regularity.

\begin{theorem}\label{theo:alg-convergence}
Let $\overline{x}\in\mathbb{X}$ be a stationary point of the NLSDP \eqref{prog:NLSDP} with a Lagrange multiplier $(\overline\xi,\overline\Gamma)\in M(\overline{x})$. Let $\delta\in(0, \min(\overline\lambda_\alpha, |\overline\lambda_\gamma|))$. Then, 
\begin{itemize}
\item[(i)] if the W-SOC and the constraint nondegeneracy hold at $\overline Z=(\overline x,\overline\xi,\overline\Gamma)\in\mZ$, then  {there exists $\overline\eta >0$ such that for any $\widehat Z^0$ sufficiently close to $\overline Z$ and $\eta \le \overline \eta$}, Algorithm \ref{algorithm:ASN} using ${\cal U}_{0}$ is well defined and the sequence $\{Z^k\}$ so generated converges to $\overline Z$ superlinearly;

\item[(ii)] if the S-SOSC and W-SRCQ hold at $\overline Z=(\overline x,\overline\xi,\overline\Gamma)\in\mZ$, then there exists $\overline\eta >0$ such that for any $\widehat Z^0$ sufficiently close to $\overline Z$ and $\eta \le \overline \eta$, Algorithm \ref{algorithm:ASN} using ${\cal U}_{\mI}$ is well-defined and the sequence $\{Z^k\}$ so generated converges to $\overline Z$ superlinearly.
\end{itemize}
Moreover, if $f$, $g$ and $h$ are twice Lipschitz continuously differentiable, i.e., they have Lipschitz continuous second order differentials, and $\tau = 1$, then the corresponding convergence rates obtained in (i) and (ii) are quadratic.
\end{theorem}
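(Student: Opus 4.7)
The plan is to follow the standard template for local convergence of inexact semismooth Newton methods, carried out at the corrected iterates $Z^k = {\cal P}_\delta(\widehat Z^k)$ rather than the raw iterates $\widehat Z^k$. The two crucial enablers are already established in the excerpt: Proposition \ref{prop:nons} supplies the uniform bound $\|({\cal U}^k)^{-1}\| \le \kappa_0$ (resp.\ $\kappa_1$) at every corrected point inside a suitable neighborhood ${\cal N}$ of $\overline Z$, and Proposition \ref{prop:error-bound} delivers both the semismoothness-type residual estimate $\|F(Z) - {\cal U}(Z-\overline Z)\| \le \varrho\|Z-\overline Z\|$ for any prescribed $\varrho > 0$ (by shrinking ${\cal N}$) together with the comparison $\|Z-\overline Z\| \le (L_g+2)\|\widehat Z-\overline Z\|$. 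Strong semismoothness of $F$, needed later for the quadratic rate, will follow because the smooth blocks of $F$ have locally Lipschitz Jacobians when $f,g,h$ are twice Lipschitz continuously differentiable, while $\Pi_{\mathbb{S}^n_+}$ is already known to be strongly semismooth.

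The first step would be to derive a one-step contraction. Using $F(\overline Z)=0$ and the inexactness criterion in Step 4 of Algorithm \ref{algorithm:ASN}, one writes
\[
\widehat Z^{k+1}-\overline Z = -({\cal U}^k)^{-1}\bigl[F(Z^k)-F(\overline Z)-{\cal U}^k(Z^k-\overline Z)\bigr] + ({\cal U}^k)^{-1}\bigl[{\cal U}^k d^k + F(Z^k)\bigr].
\]
I would bound the first bracket by Proposition \ref{prop:error-bound} and the second by $\min(\eta,\|F(Z^k)\|^\tau)\|F(Z^k)\| \le \eta L_F\|Z^k-\overline Z\|$, where $L_F$ is a local Lipschitz constant of $F$, obtaining $\|\widehat Z^{k+1}-\overline Z\| \le \kappa_0(\varrho+\eta L_F)\|Z^k-\overline Z\|$. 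Combining this with $\|Z^{k+1}-\overline Z\| \le (L_g+2)\|\widehat Z^{k+1}-\overline Z\|$ produces a one-step contraction with rate $(L_g+2)\kappa_0(\varrho+\eta L_F)$. Choosing $\varrho$ small enough by shrinking ${\cal N}$, and then $\overline\eta$ small enough, makes this rate strictly less than $1/2$, so a straightforward induction shows $\{Z^k\}$ remains inside ${\cal N}$ and converges to $\overline Z$; well-definedness of the algorithm is automatic because Proposition \ref{prop:nons} guarantees invertibility of each ${\cal U}^k$ on ${\cal N}$.

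To extract the convergence rate, I would revisit the same decomposition once $Z^k \to \overline Z$ has been established. Semismoothness of $F$ upgrades the residual bound to $\|F(Z^k) - {\cal U}^k(Z^k-\overline Z)\| = o(\|Z^k-\overline Z\|)$, and, because $\|F(Z^k)\| \to 0$, eventually $\min(\eta,\|F(Z^k)\|^\tau) = \|F(Z^k)\|^\tau$, so
\[
\|\widehat Z^{k+1}-\overline Z\| \le \kappa_0 \cdot o(\|Z^k-\overline Z\|) + \kappa_0 L_F^{1+\tau}\|Z^k-\overline Z\|^{1+\tau} = o(\|Z^k-\overline Z\|).
\]
The fixed factor $(L_g+2)$ preserves the $o(\cdot)$ rate, giving superlinear convergence of $\{Z^k\}$. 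Under twice Lipschitz differentiability of $f,g,h$, strong semismoothness of $F$ promotes the residual bound to $O(\|Z^k-\overline Z\|^2)$, and with $\tau=1$ the inexactness term is also $O(\|Z^k-\overline Z\|^2)$, yielding the quadratic rate. The argument for part (ii) is identical with $({\cal U}_{\cal I},\kappa_1)$ replacing $({\cal U}_0,\kappa_0)$.

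The main obstacle is not the semismooth Newton analysis itself, which is largely mechanical once the key propositions are in place, but the careful coordination of constants and neighborhoods. One must shrink ${\cal N}$ so that Proposition \ref{prop:error-bound} delivers a sufficiently small $\varrho$, then choose $\overline\eta$ so that $(L_g+2)\kappa_0(\varrho+\overline\eta L_F)<1$, and, crucially, verify that after each correction the new iterate $Z^{k+1}={\cal P}_\delta(\widehat Z^{k+1})$ still lies in the (possibly smaller) neighborhood required by Propositions \ref{prop:nons} and \ref{prop:error-bound}. This last point is precisely where the estimate $\|Z-\overline Z\| \le (L_g+2)\|\widehat Z-\overline Z\|$ plays its essential role, certifying that the correction step never ejects an iterate from the good region.
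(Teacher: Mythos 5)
Your proposal is correct and follows essentially the same route as the paper's proof: the identical decomposition of $\widehat Z^{k+1}-\overline Z$ into the semismoothness residual plus the inexactness term, the same reliance on Propositions \ref{prop:nons} and \ref{prop:error-bound}, the same use of the estimate $\|Z-\overline Z\|\le (L_g+2)\|\widehat Z-\overline Z\|$ to keep iterates in the good neighborhood after correction, and the same upgrade to superlinear/quadratic rates via (strong) semismoothness. The only cosmetic difference is that the paper fixes $\varrho = \tfrac{1}{4(L_g+2)\kappa_0}$ and $\overline\eta = 1/(4\kappa_0 L_F(L_g+2))$ explicitly to get a contraction factor of $\tfrac{1}{2}$, whereas you keep $\varrho$ and $\overline\eta$ generic.
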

\begin{proof}
Here, we focus on the case (i) where the W-SOC and the constraint nondegeneracy hold at $\overline Z$ and ${\cal U}_0$ is employed. The proof for case (ii) where the S-SOSC and W-SRCQ hold at $\overline Z$ can be obtained similarly.

 By Proposition \ref{prop:nons}, Proposition \ref{prop:error-bound} and \eqref{eq:ZZhatbound}, we can find a positive constant $\varsigma$, a neighborhood $\mathbb{B}({\overline{Z}},\varsigma)$ of $\overline{Z}$ and {\tvio a positive constant $\kappa_0$} such that for any  $ \widehat Z\in \mathbb{B}({\overline{Z}},\varsigma)$ and $Z = {\cal P}_{\delta}(\widehat Z)$, 
\begin{equation}\label{eq:semi-neighborhood}
\| F(Z) - F(\overline{Z}) - {\cal U}_0(Z - \overline{Z}) \|	\le \frac{1}{  4(L_g+2)\kappa_0} \| Z - \overline{Z} \| \quad  \mbox{ and } \quad \|({\cal U}_0)^{-1} \| \le \kappa_0,
\end{equation}
and
\begin{align}
	\| Z -\overline Z\| \le  
	 { (L_g+2) \| \widehat Z - \overline Z\|},  \label{eq:con-b1}
\end{align}
where ${\cal U}_0 \in \partial_B F(Z)$ is given by \eqref{eq:Uk} and $L_g > 0$ is the Lipschitz constant of $g$ over $\mathbb{B}({\overline{Z}},\varsigma)$. 

    Now, let $\widehat Z^0$ be any point in ${\mathbb{B}}({\overline{Z}}, \varsigma)$. Recall that $Z^0 = {\cal P}_{\delta}(\widehat Z^0)$. 
Then, \eqref{eq:semi-neighborhood} implies that ${\cal U}_0^0\in \partial_B F(Z^0)$ is nonsingular, and therefore $d^0$ and $\widehat Z^1$ are well-defined. Since $F(\overline{Z}) = 0$, we further have 
\begin{align}
	\| \widehat Z^{1}-\overline Z\|  &=\| Z^0-\overline Z-({\cal U}_{0}^0)^{-1} F(Z^0) + d^0 + ({\cal U}_{0}^0)^{-1} F(Z^0) \| \nn\\
	&\le \| -({\cal U}_{0}^0)^{-1}\big[F(Z^0)-F(\overline Z)-{\cal U}_{0}^0(Z^0-\overline Z) - {\cal U}_{0}^0 d^0 - F(Z^0)\big] \|  \nn\\
	& \le \kappa_0|| F(Z^0)-F(\overline Z)-{\cal U}_{0}^0(Z^0-\overline Z) \| + { \eta\kappa_0 \| F(Z^0) - F(\overline Z)\|} \nn\\
	&\le \big(\frac{1}{ 4(L_g + 2)} + \eta\kappa_0 L_F\big) \| Z^0-\overline Z\|, \label{eq:con-b0}
\end{align}
where the last inequality follows from \eqref{eq:semi-neighborhood} and the Lipschitz continuity of $F$ over $\mathbb{B}(\overline{Z},\varsigma)$ with $L_F$ being the corresponding Lipschitz constant. Meanwhile, we know from \eqref{eq:con-b1} that $ \|
Z^0 - \overline{Z} \| \le (L_g+2) \| \widehat Z^0 - \overline{Z} \|\le (L_g + 2)\varsigma$.
Then, for any $0 \le \eta \le \overline\eta = 1/(4\kappa_0L_F(L_g + 2))$, we see from  \eqref{eq:con-b0} that  $ \| \widehat{Z}^1 - \overline{Z}\| \le  \| Z^0-\overline Z\|/(2(L_g + 2)) \le \varsigma $ and 
\begin{equation}\label{eq:contraction_Z} 
\| Z^1 - \overline{Z}\| \le (L_g + 2)\|\widehat{Z}^1 - \overline{Z}\| \le \frac{1}{2}\|Z^0 - \overline{Z}\|.
\end{equation}
In summary, the derivation above shows that if $\widehat Z^0$ belongs to ${\mathbb{B}}({\overline{Z}}, \varsigma)$, then every $\widehat{Z}^k$ and $Z^k$ produced by Algorithm \ref{algorithm:ASN} satisfy
\[ \widehat{Z}^k \in {\mathbb{B}}({\overline{Z}}, \varsigma) \quad \mbox{ and } \quad Z^k \in \mathbb{B}({\overline Z}, (L_g+2)\varsigma), \]
$\{Z^k\}$ converges to $\overline{Z}$ due to the contraction inequality $\eqref{eq:contraction_Z}$. Moreover, the semismoothness of $F$,  \eqref{eq:semi-neighborhood} and \eqref{eq:con-b0} further imply that
\begin{equation}\label{eq:semismooth-hatZkp1}
	\begin{aligned}
\|\widehat Z^{k+1} - \overline{Z} \| = {}&	\| -({\cal U}_{0}^k)^{-1}[F(Z^k)-F(\overline Z)-{\cal U}_{0}^k(Z^k-\overline Z) - {\cal U}_{0}^k d^k - F(Z^k)]\| \\
={}& o(\| Z^k - \overline Z\|) + O(\|Z^k - \overline{Z}\|^{1 + \tau}).
	\end{aligned}
\end{equation}
This, together with the fact that $ \|Z^{k+1} - \overline{Z}\| \le (L_g + 2) \|\widehat{Z}^{k+1} - \overline{Z}\|$, implies the superlinear convergence of $\{Z^k\}$.

If $f,g,h$ are twice Lipschitz continuously differentiable, by the strong semismoothness of $\Pi_{\Sbb^n_+}(\cdot)$ on $\Sbb^n$ and {\tvio the Lipschitz continuous differentiability} of $\nabla_x L$, $g$ and $h$, we know $F$ is strongly semismooth at $\overline Z$. 
The strong semismoothness of $F$ and $\tau = 1$ allow us to replace \eqref{eq:semismooth-hatZkp1} by
the following:
\begin{align}
	\|\widehat Z^{k+1} - \overline{Z} \| = {}&	\| -({\cal U}_{0}^k)^{-1}[F(Z^k)-F(\overline Z)-{\cal U}_{0}^k(Z^k-\overline Z) - {\cal U}_{0}^k d^k - F(Z^k)]\| \nn  \\ 
	={}& O(\|Z^k - \overline{Z}\|^2). \nn
\end{align}
Then, the quadratic convergence of $\{Z^k\}$ follows directly. $\hfill \Box$ 
\end{proof}

\begin{remark}\label{rmk:termination}
Let $\{Z^k\}$ be the sequence generated by Algorithm \ref{algorithm:ASN}. For a given tolerance $\varepsilon >0$, Proposition \ref{prop:error-bound} implies that 
$F(Z^k) < \varepsilon$ is a reasonable stopping criterion for Algorithm \ref{algorithm:ASN}.
In fact, the local Lipschitz continuity of $F$ around $\overline{Z}$ and Proposition \ref{prop:error-bound} imply that $F(Z^k) = \Theta(\|Z^k - \overline{Z}\|)$. 
\end{remark}

\begin{remark}[Choice of $\widehat Z^0$ and $\delta$]
	It is not difficult to observe from Theorem \ref{theo:alg-convergence} and its proof that the choice of $\widehat Z^0$ and $\delta$ is crucial to the performance of Algorithm \ref{algorithm:ASN}. A larger $\delta$ may lead to a more restrictive choice of $\widehat Z^0$ in order to ensure the nonsingularity of ${\cal U}^0$. Meanwhile, accurately determining the value of $\min(\overline \lambda_\alpha, |\lambda_\gamma|)$ for the general NLSDP is often challenging in practical settings, though obtaining a rough estimate may be feasible. In this study, our primary emphasis lies in the local convergence analysis of Algorithm \ref{algorithm:ASN}. We defer the considerations regarding the efficient and robust implementation of the algorithm, including the selection of $\widehat Z^0$ and $\delta$, to be explored in future research.
\end{remark} 

{\vio
To conclude this section, we will illustrate with a simple example why the correction step is essential for semismooth Newton methods. Specifically, without the correction step, the classical semismooth Newton method may fail to converge or even fail to execute due to the singularity of the Jacobian. Moreover, the example illustrates that our algorithm can handle some differentiable but degenerate points, which is beyond the reach of the classical semismooth Newton method.

\begin{example}\label{example:cor-important}
Consider the following polyhedral problem, which can be viewed as a special instance of semidefinite programming:
\begin{equation*}\label{prob:K-lin-proj}
    \begin{array}{cl}
        \min & \displaystyle \frac{1}{2} \| x - (0,1,0)^T \|^2 \\ [5pt]
        \text{s.t.} & \begin{bmatrix} 1 & 1 & 0 \\ 1 & 0 & 1 \end{bmatrix} x = \begin{bmatrix} 1 \\ 0 \end{bmatrix}, \\ [2pt]
        & x \in \mathbb{R}^3_+,
    \end{array}
\end{equation*}
where it is evident that $(0,1,0)^T$ is the unique solution with the multiplier set $M = \{ \left((0,t)^T, (t,0,t)^T\right) \mid t \leq 0 \}$.

Verification reveals that the W-SRCQ and S-SOSC hold at the point 
\begin{align}\nonumber
(\overline{x}, \overline{\xi}, \overline{\Gamma})= \left((0,1,0)^T, (0,0)^T, (0,0,0)^T\right),
\end{align}
and the associated $\overline{\mU}_{\mI}$ is nonsingular. Therefore, one can employ Algorithm \ref{algorithm:ASN} using $\mathcal{U}_{\mI}$ to solve this problem. However,  for any   point $(x^0, \xi^0, \Gamma^0)$ such that
\begin{equation}\nonumber
      0< \|(x^0, \xi^0, \Gamma^0)-(\overline{x}, \overline{\xi}, \overline{\Gamma})\|\le 1,\ x^0_1+\Gamma^0_1<0,\ x^0_2+\Gamma^0_2>0 \mbox{ and } x^0_3+\Gamma^0_3<0,
\end{equation} 
which is in proximity to $(\overline{x}, \overline{\xi}, \overline{\Gamma})$, the corresponding $F$ is differentiable at $(x^0, \xi^0, \Gamma^0)$ with a singular Jacobian $\nabla F(x^0, \xi^0, \Gamma^0)$. This suggests that the classical semismooth Newton method will encounter failure at such $(x^0, \xi^0, \Gamma^0)$. However, our proposed Algorithm \ref{algorithm:ASN} is capable of handling such scenarios through the correction. Taking 
\begin{align*}
Z^0=(x^0, \xi^0, \Gamma^0)=\left((-\varepsilon,1+\varepsilon,0)^T, (0,\varepsilon)^T, (0,0,-\varepsilon)^T\right),\quad \varepsilon\in(0,1/10),
\end{align*}
as an example, Algorithm \ref{algorithm:ASN} (with $\delta=1/5$) will first correct it to 
\begin{align*}
\widehat Z^0=(\widehat x^0,\widehat  \xi^0,\widehat  \Gamma^0)=\left((-\varepsilon,1+\varepsilon,0)^T, (0,\varepsilon)^T, (\varepsilon,0,0)^T\right).
\end{align*}
Then the Newton step based on $\mathcal{U}_{\mI}$ reads
\begin{align*}
( x^1,  \xi^1,  \Gamma^1)=\widehat Z^0-(\mathcal{U}^0_{\mI})^{-1}F(\widehat Z^0)=\left((0,1,0)^T, (0,0)^T, (0,0,0)^T\right)=(\overline{x}, \overline{\xi}, \overline{\Gamma}),
\end{align*}
which indicates that our algorithm converges to a KKT solution in only one iteration.
\end{example}}

\begin{remark}\vio
	In the recent study \cite{duy2023generalized}, the authors introduced a Newton-type algorithm to solve the subgradient inclusion problem, achieving a superlinear convergence rate assuming metric regularity, but without requiring strong metric regularity. Even for tilt-stable local minimizers, their results improve on those of Mordukhovich and Sarabi \cite{mordukhovich2021generalized}. 

However, in the context of optimization problems, the equivalence between the metric regularity and the strong metric regularity at local optimal solutions has been established for the nonlinear programming in \cite{dontchev1996characterizations},  and for the nonlinear second-order cone programming and NLSDP in the recent preprints \cite{chen2024aubin} and \cite{chen2024equivalent}, respectively. 
Meanwhile, the conditions we impose for achieving a superlinear convergence rate, such as W-SRCQ and S-SOSC, do not even imply metric subregularity (see Example \ref{example:4}).
\end{remark}   

\begin{remark}\vio
It is worth noting that the results obtained in Section \ref{section:nonsingularity} and Section \ref{section:5} can be directly extended to the following generalized equation (GE): 
\begin{align}\label{eq:GE}
	0\in \phi(x)+\nabla g(x)^T\mN_D(g(x)),
\end{align}
where $\phi:\Rbb^n\to\Rbb^n$ is continuously differentiable, $g:\Rbb^n\to\Rbb^m$ is twice continuously differentiable and $D$ is the Cartesian product of a finite number of positive semidefinite cones and zeros.  Let $l(x,y):=\phi(x)+\nabla g(x)^Ty$ for $(x,y)\in \Rbb^n\times \Rbb^m$. Define the corresponding nonsmooth mapping $F:\Rbb^n\times \Rbb^m\to \Rbb^n\times \Rbb^m$ as
\begin{align}\nonumber
	F(x,y)=\begin{bmatrix}
		l(x,y)\\
		-g(x)+\Pi_D(g(x)+y)
	\end{bmatrix}.
\end{align}
For any $(x,y)\in \Rbb^n\times \Rbb^m$ (not necessarily a solution to \eqref{eq:GE}), define $$\widetilde\mC(x,y):=\mT_D(\Pi_D(g(x)+y))\cap (g(x)+y-\Pi_D(g(x)+y))^{\perp}$$ 
and
$$   \widetilde\sigma(x,y):=\sigma(g(x)+y-\Pi_D(g(x)+y),\mathcal{T}^2_{D}(\Pi_D(g(x)+y),\nabla g(x)^T\dxx)),$$
where $\Pi_D(\cdot)$ is the metric projection operator onto $D$, ${\cal T}_D(z)$ is the tangent cone to $D$ at $z$ defined in \eqref{eq:def-Tangent-g}, and ${\cal T}^2_D(z,d)$ is the second order tangent set to $D$ at $z$ in the direction $d$ defined in \eqref{eq:def-second-order-tangent}.  The constraint nondegeneracy for the GE \eqref{eq:GE} at $(x,y)$ is then defined as
\begin{equation}\label{eq:LICQ-GE}
	\nabla g(x)^T\Rbb^n+\lin(\widetilde\mC(x,y))=\mY.
\end{equation}
Analogously, the W-SRCQ for the GE \eqref{eq:GE} at $(x,y)$ is defined by
\begin{equation*}
	\nabla g(x)^T\Rbb^n+\aff(\widetilde\mC(x,y))=\mY.
\end{equation*}
We say the W-SOC for the GE \eqref{eq:GE}  holds at $(x,y)$ if and only if 
\begin{equation}\label{eq:W-SOC-GE}
	\la \dxx,\nabla_{x} l(x,y)\dxx\ra-\widetilde\sigma(x,y)\neq0 \quad \forall\, \dxx\in\appl(x,y)\backslash\{0\},
\end{equation}
where 
\begin{equation}\nonumber
	\appl(x,y):=\left\{\dxx\mid  \nabla g(x)^T\dxx\in\lin(\widetilde\mC(x,y))\right\}.
\end{equation}
Similarly, the S-SOC for the GE \eqref{eq:GE} at $(x,y)$ is defined correspondingly by
\begin{equation}\label{eq:S-SOC-GE}
	\la \dxx,\nabla_{x} l(x,y)\dxx\ra-\widetilde\sigma(x,y)\neq0 \quad \forall\, \dxx\in\app(x,y)\backslash\{0\},
\end{equation}
where 
\begin{equation}\nonumber
	\app(x,y):=\left\{\dxx\mid  \nabla g(x)^T\dxx\in\aff(\widetilde\mC(x,y))\right\}.
\end{equation}
Thus, the obtained results (e.g., Theorem \ref{theorem:bar-N-SDP} in Section \ref{section:nonsingularity} and Theorem \ref{theo:alg-convergence} in Section \ref{section:5}) can be shown by using exactly the same manners. {The noteworthy point is that, based on the proof of Theorem 1, the S-SOC of the generalized equation being ``$\neq0$'' instead of ``$>0$'' does not affect the validity of the conclusion.}

In the recent work \cite{gfrerer2021semismooth},  a generalized semismooth* Newton-like method for solving generalized equations is proposed.
When some appropriate regularity condition (see \cite[Theorem 4.4]{gfrerer2021semismooth} for details) is assumed, the method ensures local superlinear convergence. {The proposed regularity conditions are shown to be weaker than the metric regularity.} For \eqref{eq:GE} with $D$ being a polyhedron, the  regularity conditions hold under \cite[Assumptions 1 and 2]{gfrerer2021semismooth}, as elaborated  in \cite[Section 5]{gfrerer2021semismooth}. Moreover, as stated in \cite[Remark 5.2]{gfrerer2021semismooth}, for a solution  $(\overline x,\overline y)$ to \eqref{eq:GE},  \cite[Assumption 1]{gfrerer2021semismooth} is equivalent to the constraint nondegeneracy for the GE defined by \eqref{eq:LICQ-GE}. 
Additionally, when $D = \Rbb^n_{-}$, it is stated in \cite[Remark 5.10]{gfrerer2021semismooth} that \cite[Assumption 2]{gfrerer2021semismooth} is equivalent to the S-SOC \eqref{eq:S-SOC-GE}. However, it remains unclear under what circumstances the regularity conditions proposed in \cite[Theorem 4.4]{gfrerer2021semismooth} will hold for non-polyhedral sets, such as positive semidefinite cones.
Interestingly, in \cite[Example 5.13]{gfrerer2021semismooth}, their algorithm produces the same sequence as the classical semismooth Newton method using $\mU_\mI$ (i.e., Algorithm \ref{algorithm:ASN} without correction). For this particular case, the correction map ${\cal P}_{\delta}(\cdot)$, defined by \eqref{eq:def-pi-mapping}, reduces to the identity map.

\end{remark}

\section{Numerical results}\label{section:numerical}

	In this section, we present a series of examples to empirically validate our theoretical findings and demonstrate the fast local convergence of our proposed algorithm. In our numerical experiments, we consider a range of problems, including both convex and nonconvex nonlinear SDPs. 
		It's worth noting that the generalized Jacobian regularity condition does not hold for all the test instances, thereby rendering classic convergence results for semismooth Newton methods inapplicable. 
	
	As preliminary numerical results, we solve the Newton linear system involved in each iteration of Algorithm \ref{algorithm:ASN} exactly, i.e., the parameter $\eta$ is set to be zero. Following Remark \ref{rmk:termination}, we assess the accuracy of an approximate solution $Z$ to the NLSDP \eqref{prog:NLSDP} by measuring the residual $\|F(Z)\|$. For a given stopping tolerance $\epsilon >0$, our algorithm terminates when $\|F(Z)\|<\epsilon$. Throughout our experiments, we fix $\epsilon = 10^{-10}$. All computational results are generated using MATLAB (version 9.10) running on a Mac mini (Apple M1, 16 G RAM).

	While our focus has been on investigating the local convergence properties of our newly proposed semismooth Newton method, our numerical tests indicate that the algorithm is quite robust to the selection of the starting point. To facilitate reproducibility, we adhere to the following procedures for generating the starting point:
	\begin{itemize}
		\item[(i)] Fix the random seed and select the perturbation constant $C_{\rm perturb}$. In our tests, we fix the random seed via {\tt rng(2,``twister'')} and set $C_{\rm perturb}=10$ for examples \ref{example:3}, \ref{example:4} and \ref{example:5}, and set $C_{\rm perturb}=1$ for example \ref{example:6}. 
		\item[(ii)] Generate random noises $ (\Delta x, \Delta \xi, \Delta \Gamma)$ with each entry following a standard normal distribution $N(0,1)$, e.g.,  
		\[{\tt  \Delta \Gamma = randn(n),  \quad \Delta \Gamma = 0.5*(\Delta \Gamma + \Delta \Gamma')}.\]
		\item [(iii)] Set the starting point $\widehat Z^0$ as
		\[
			\widehat Z^0 = \overline Z + C_{\rm perturb} \times \left(\frac{\Delta x}{\|\Delta x\|}, \frac{\Delta \xi}{\|\Delta \xi\|}, \frac{\Delta \Gamma}{\|\Delta \Gamma\|}\right),
		\]
		where $\overline Z$ is the known optimal solution to \eqref{prog:NLSDP}.
	\end{itemize}
	It is important to note that the perturbation constant $C_{\text{perturb}}$ employed is not insignificant. Remarkably, varying the random seeds and the perturbation constant $C_{\text{perturb}}$ results in comparable quadratic convergence behavior.

\begin{example}\label{example:3}
	Consider the following convex quadratic SDP problem:
	\begin{align}\label{eq:prob-example3}
		\begin{aligned}
			\min\quad        &\frac{1}{2}\| x_{11}-1\| ^2+\frac{1}{2}\| x_{22}-x_{12}-x_{21}\| ^2 \\
			\mbox{s.t.}\quad &  \la E,X\ra\le 1,\\
			&X\in \Sbb^2_+ , 
		\end{aligned}
	\end{align}
	where $X=\begin{bmatrix}x_{11} &x_{12}\\x_{21}&x_{22}\end{bmatrix}\in \Sbb^2$. It is not difficult to observe that $\overline X=\begin{bmatrix} 1 &0\\0&0\end{bmatrix}$ is an optimal solution and $(\overline \xi,\overline\Gamma)=(0,0)$ is the unique multiplier. For such $\overline{Z}=(\overline X,\overline \xi,\overline\Gamma)$, 
	one can check that the W-SOC and the constraint nondegeneracy hold at $\overline{Z}$, but the S-SOSC fails. 
	Then, Theorem \ref{theorem:bar-N-SDP} implies the nonsingularity of $\overline {\cal U}_{0}$, which can also be verified directly by noting that the linear system
	\begin{align*}
	\overline {\cal U}_{0}(\dZZ)=
	\begin{bmatrix}
	&\begin{bmatrix}\dxx_{11} &-\dxx_{22}+\dxx_{12}+\dxx_{21}\\ -\dxx_{22}+\dxx_{12}+\dxx_{21}&\dxx_{22}-\dxx_{12}-\dxx_{21}\end{bmatrix}+\dxi E+\dGG \\
	&\la E,\Delta X\ra  \\
	&\begin{bmatrix}\dGG_{11} &\dGG_{12}\\ \dGG_{21}&-\dxx_{22}\end{bmatrix}
	\end{bmatrix}
	=0
	\end{align*}
	with $\dZZ=(\dXX,\dxi,\dGG)=\left(\begin{bmatrix}\dxx_{11} &\dxx_{12}\\\dxx_{21}&\dxx_{22}\end{bmatrix},\dxi,\begin{bmatrix}\dGG_{11} &\dGG_{12}\\ \dGG_{21}&\dGG_{22}\end{bmatrix}\right)\in \Sbb^2\times\RR\times\Sbb^2$ has only trivial zero solution $\dZZ=0$.	
	 Thus, Algorithm \ref{algorithm:ASN} using ${\cal U}_{0}$ can be applied to solve \eqref{eq:prob-example3}. In the test, we set the parameter $\delta = 0.5$, which lies between $0$ and the smallest nonzero magnitude eigenvalue of $\overline{A}$.
	 We report the detailed performance of our algorithm in Figure \ref{fig:ex3} and Table \ref{tab:ex3}.
	 \begin{table}[htbp]
		\centering
		\begin{tabular}{cccc}
			\hline \\[-5pt]
			 Iteration $k$ & $\|F(Z^k)\|$ & $\|Z^k - \overline{Z}\|$ & $\sigma_{\min}({\cal U}^k)$ \\
			\hline
			 0 & 2.51e+01 & 1.43e+01 & 4.03e-01 \\
			 1 & 7.50e-01 & 9.36e-01 & 3.07e-01 \\
			 2 & 4.19e-02 & 2.10e-02 & 3.04e-01 \\
			 3 & 3.33e-05 & 2.88e-05 & 3.04e-01 \\
			 4 & 2.06e-14 & 1.80e-14 & 3.04e-01 \\
			\hline
			\end{tabular}
		\caption{Convergence details of Algorithm \ref{algorithm:ASN} for solving \eqref{eq:prob-example3}.}
		\label{tab:ex3}
		\end{table}
	In the table, we report residuals $\|F(Z^k)\|$ and $\|Z^k - \overline{Z}\|$, and the minimum singular value of ${\cal U}^k$, denoted by $\sigma_{\min}({\cal U}^k)$. 
	The detailed results in Table \ref{tab:ex3} demonstrate the quadratic convergence of Algorithm \ref{algorithm:ASN}, the locally uniform boundedness of $({\cal U}^k)^{-1}$, and the relation $F(Z^k) = \Theta(\|Z^k - \overline{Z}\|)$ asserted in Remark \ref{rmk:termination}.

	 \begin{figure}[htbp]
		\centering
		\includegraphics[width=0.72\textwidth]{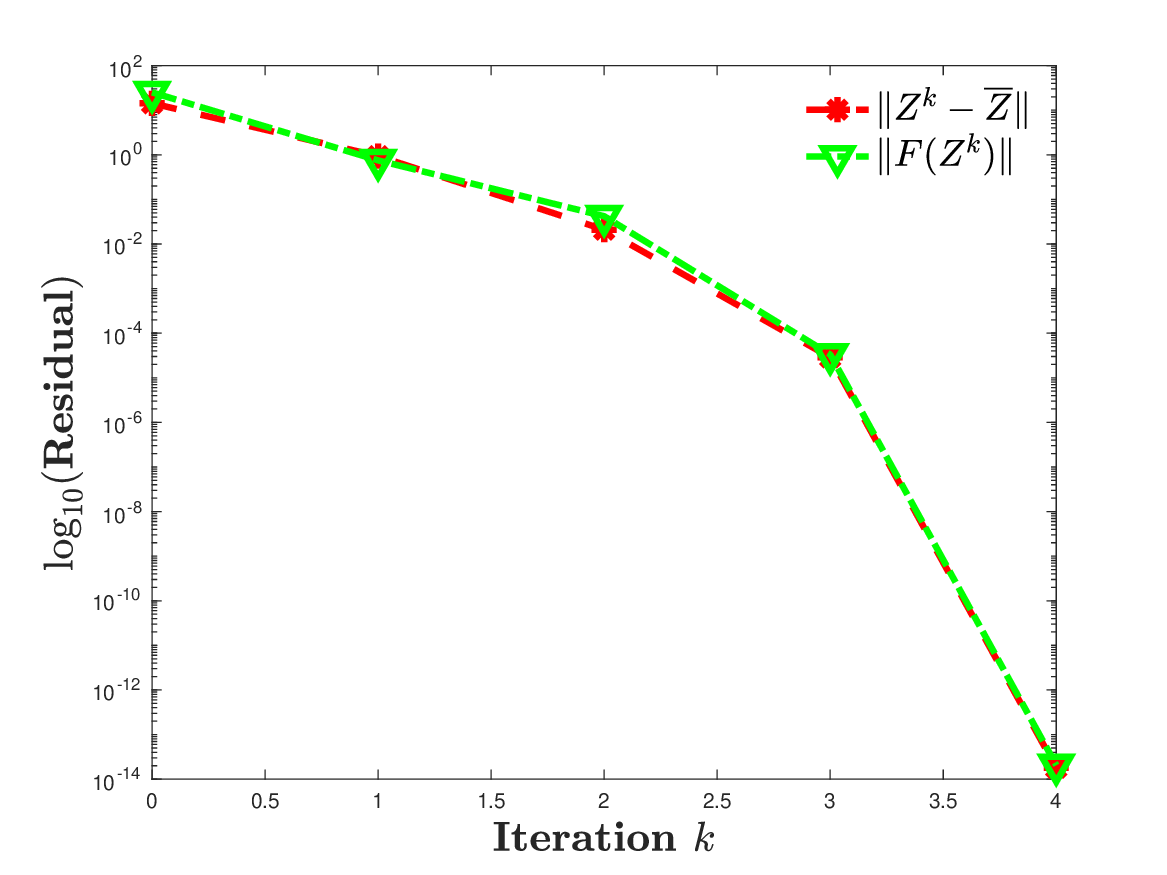}
		\caption{Quadratic convergence of Algorithm \ref{algorithm:ASN} for solving \eqref{eq:prob-example3}.}
		\label{fig:ex3}
	\end{figure}
\end{example}

The following example illustrates the efficiency of our algorithm in solving nonlinear SDPs even in the absence of calmness, as defined in \cite[Definition 9(30)]{rockafellar2009variational}.

\begin{example}[Example 3 in \cite{ding2017characterization}; see also \cite{zhou2017unified}] \label{example:4}
	Let $B=\begin{bmatrix}3/2 & -2\\ -2 & 3\end{bmatrix}$ and $b=B^{-1/2}\begin{bmatrix}5/2 \\ -1\end{bmatrix}$. Consider the following convex quadratic SDP problem:
\begin{align}\label{eq:prob_example4_P}
\begin{aligned}
\min\quad        &\frac{1}{2}\| x+b\| ^2+t \\
\mbox{s.t.}\quad &  \mA^* x+tE+I+\Delta\in\Sbb^2_+ , \\
&t\ge 0,
\end{aligned}
\end{align}
and its dual
\begin{align}\label{eq:prob_example4_D}
\begin{aligned}
\max\quad        &-\frac{1}{2}\| \mA Y-b\| ^2-\la I+\Delta,Y\ra \\
\mbox{s.t.}\quad &  \la E,Y\ra \le 1 , \\
&Y\in \Sbb^2_+,
\end{aligned}
\end{align}
where $\mA Y=B^{1/2}\Diago (Y)$ for all $Y\in\Sbb^2$, $\mA^* x=\Diago(B^{1/2}x)$ for all $x\in\RR^2$ and $\Delta=\Diago(-\varepsilon,\varepsilon)$ with $\varepsilon\ge 0$. As shown in \cite{ding2017characterization}, the solution map of the KKT system associated with problems \eqref{eq:prob_example4_P} and \eqref{eq:prob_example4_D}, denoted by ${S}_{\rm KKT}$ in \cite{ding2017characterization}, is not calm with respect to the perturbation $\varepsilon$. When $\varepsilon =0$, it is not difficult to see that $\overline Y=\begin{bmatrix}1& 0 \\ 0 & 0\end{bmatrix}$ is the unique optimal solution to \eqref{eq:prob_example4_D} with the unique multiplier $(0,0)\in \mathbb{R}\times\mathbb{S}^2$. Then, one can check that the W-SOC and constraint nondegeneracy hold at $\overline{Z}=(\overline Y, 0,0)$, but the S-SOSC fails. From Theorem \ref{theorem:bar-N-SDP} and Proposition \ref{prop:pd:LSDP-V01}, we know that $\overline {\cal U}_{0}$ is nonsingular but $\overline {\cal U}_{\cal I}$ is singular. Thus, we can apply Algorithm \ref{algorithm:ASN} with ${\cal U}_{0}$ to solve  \eqref{eq:prob_example4_D}. In the test, we set parameter $\delta = 0.5$. The convergence details of our algorithm are reported in Table \ref{tab:ex4} and Figure \ref{fig:ex4}. Again, the quadratic convergence of our algorithm is confirmed.

\begin{figure}[htbp]
	\centering
	\includegraphics[width=0.72\textwidth]{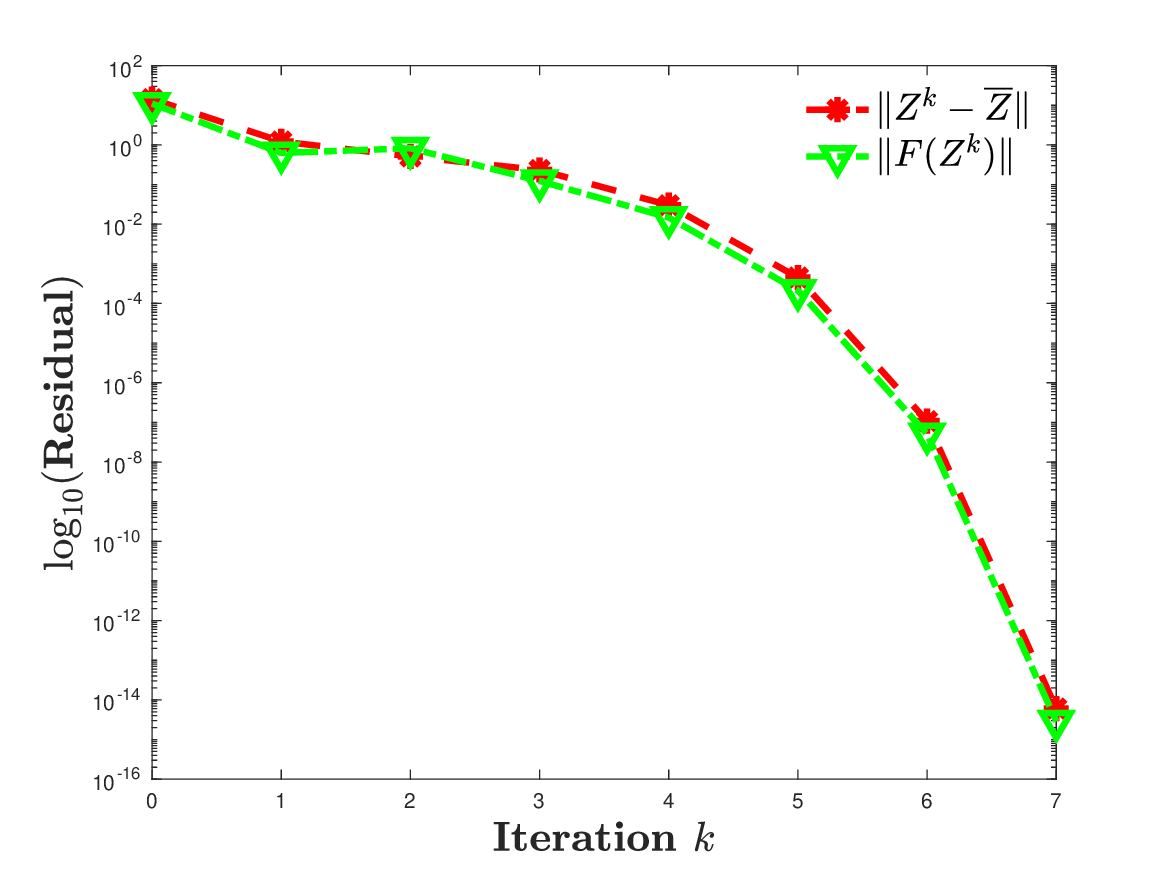}
	\caption{Quadratic convergence of Algorithm \ref{algorithm:ASN} for solving \eqref{eq:prob_example4_D}.}
	\label{fig:ex4}
\end{figure}
\begin{table}[htbp]
	\centering
	\begin{tabular}{cccc}
		\hline \\[-5pt]
		 Iteration $k$ & $\|F(Z^k)\|$ & $\|Z^k - \overline{Z}\|$ & $\sigma_{\min}({\cal U}^k)$ \\
		\hline
		0 & 1.10e+01 & 1.43e+01 & 4.16e-02 \\
		1 & 6.26e-01 & 1.22e+00 & 1.48e-01 \\
		2 & 8.23e-01 & 5.12e-01 & 1.38e-01 \\
		3 & 1.23e-01 & 2.33e-01 & 3.70e-01 \\
		4 & 1.46e-02 & 3.02e-02 & 2.85e-01 \\
		5 & 2.13e-04 & 4.51e-04 & 2.76e-01 \\
		6 & 5.08e-08 & 1.08e-07 & 2.76e-01 \\
		7 & 2.89e-15 & 6.05e-15 & 2.76e-01 \\
		\hline
		\end{tabular}
	\caption{Convergence details of Algorithm \ref{algorithm:ASN} for solving \eqref{eq:prob_example4_D}.}
	\label{tab:ex4}
	\end{table}
\end{example}

In the next two examples, we evaluate the performance of our algorithm for solving medium size nonlinear SDPs.

\begin{example}\label{example:5}
Consider the following convex quadratic SDP problem:
\begin{align}\label{eq:prob_example5}
\begin{aligned}
\min\quad        &\frac{1}{2}\| X_{11}-I\| ^2+\frac{1}{2}\| X_{12}\| ^2+\frac{1}{2}\| X_{21}\| ^2 \\
\mbox{s.t.}\quad &  X\in \Sbb^n_+ , 
\end{aligned}
\end{align}
where $X=\begin{bmatrix} X_{11} &X_{12}\\X_{21}&X_{22}\end{bmatrix}$ is the block representation with $X_{11}\in\RR^{l_1\times l_1}$ and $X_{22}\in\RR^{l_2\times l_2}$. We know that $\overline X=\begin{bmatrix} I &0\\0&0\end{bmatrix}$ is an optimal solution and $\overline\Gamma=0$ is the unique multiplier. One can check that the W-SOC and the constraint nondegeneracy holds at $\overline{Z}=(\overline X,\overline\Gamma)$, but the SOSC fails since $\overline X$ is not isolated. Then, we have from Theorem \ref{theorem:bar-N-SDP} that $\overline {\cal U}_{0}$ is nonsingular and Algorithm \ref{algorithm:ASN} with ${\cal U}_{0}$ can be applied to solve \eqref{eq:prob_example5}. Here, the test instance is constructed by setting $l_1=60$, $l_2=40$, and the parameter  $\delta$ is set to $0.5$. Table Figure \ref{fig:ex5} shows our algorithm converges rapidly. 

\begin{table}[htbp]
	\centering
	\begin{tabular}{cccc}
		\hline \\[-5pt]
		 Iteration $k$ & $\|F(Z^k)\|$ & $\|Z^k - \overline{Z}\|$ & $\sigma_{\min}({\cal U}^k)$ \\
		\hline
		0 & 8.39e-01 & 7.06e-01 & 6.02e-01 \\
 1 & 3.54e-02 & 3.54e-02 & 6.18e-01 \\
 2 & 1.85e-14 & 1.75e-14 & 6.18e-01 \\
		\hline
		\end{tabular}
	\caption{Convergence details of Algorithm \ref{algorithm:ASN} for solving \eqref{eq:prob_example5}.}
	\label{tab:ex5}
	\end{table}

\begin{figure}[htbp]
 \centering
 \includegraphics[width=0.72\textwidth]{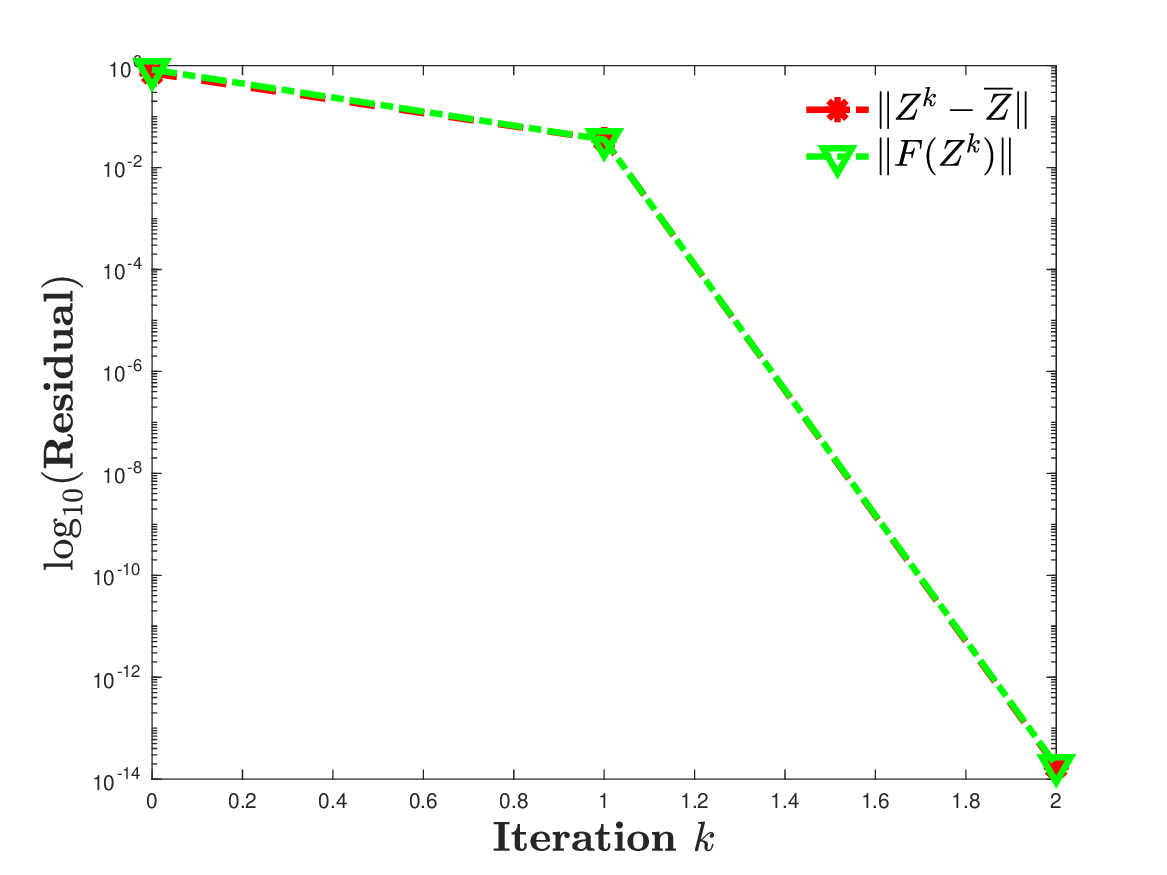}
	\caption{Quadratic convergence of Algorithm \ref{algorithm:ASN} for solving \eqref{eq:prob_example5}.}
 \label{fig:ex5}
\end{figure} 
\end{example} 

\begin{example}\label{example:6}
Recall problem \eqref{eq:prob_example1} in Example \ref{example:1}. Note that problem  \eqref{eq:prob_example1} is nonconvex, and $\overline X = 0$ is a local optimal solution with unbounded Lagrange multiplier set $M(\overline X)$. It can be verified that the S-SOSC holds at $\overline Z= (\overline{X},\overline \xi_{12},\overline\xi_{22},\overline{\Gamma})= (0,0,0,0) \in \mathbb{S}^n \times \mathbb{R}^{l_1\times l_2}\times\mathbb{R}^{l_2\times l_2}\times \mathbb{S}^n$ and the W-SRCQ holds at $\overline X$ with respect to $(\overline \xi_{12},\overline\xi_{22},\overline{\Gamma})$. Then, it is known from Theorem \ref{theorem:bar-N-SDP} that $\overline{\cal U}_{\cal I}$ is nonsingular. 
 Thus, Algorithm \ref{algorithm:ASN} with ${\cal U}_{\cal I}$ can be applied to solve \eqref{eq:prob_example1}. In our test, we set the parameter $\delta = 0.5$, and  $l_1=60$ and $l_2=40$. The detailed performance of our algorithm is reported in Table \ref{tab:ex6}. As one can observe, our algorithm converges in one iteration from a randomly generated starting point with initial distance $\|Z^0 - \overline{Z}\| = 1.73$.  

 \begin{table}[htbp]
	\centering
	\begin{tabular}{cccc}
		\hline \\[-5pt]
		 Iteration $k$ & $\|F(Z^k)\|$ & $\|Z^k - \overline{Z}\|$ & $\sigma_{\min}({\cal U}^k)$ \\
		\hline
		0 & 1.85e+00 & 1.73e+00 & 3.27e-01 \\
 1 & 2.98e-15 & 4.18e-15 & 3.27e-01 \\
		\hline
		\end{tabular}
	\caption{Convergence details of Algorithm \ref{algorithm:ASN} for solving \eqref{eq:prob_example1}.}
	\label{tab:ex6}
	\end{table}
\end{example}

\section{Conclusions}\label{section:conclusion}
	In this paper, we explore the weak second-order condition (W-SOC) and the weak strict Robinson constraint qualification (W-SRCQ) within the framework of general optimization problems \eqref{prog:P}. We establish the sufficient and necessary conditions for the existence of a nonsingular element in the generalized Jacobians of the KKT residual map $F$ at a given KKT point. Moreover, we uncover a profound primal-dual connection between the W-SOC and the W-SRCQ for the convex quadratic SDP. Drawing upon these theoretical advancements, we propose a novel semismooth Newton method with a correction step, ensuring the local convergence and yielding a (quadratic) superlinear convergence rate without relying on the classic generalized Jacobian regularity. 
There are still many issues that deserve further exploration. For example, addressing the globalization of the proposed algorithm and developing highly efficient implementations present significant challenges. Additionally, extending our theoretical findings to broader sets is highly desirable. Our results could potentially be extended to variational inequalities with semidefinite constraints and optimization problems with matrix cone constraints, as defined in \cite{ding2014introduction}. Furthermore, exploring the generalization of our results to more general sets, such as ${\cal C}^2$-cone reducible sets, would be another challenging topic.

\begin{acknowledgements}
	We would like to thank the referees as well as the   editors for their constructive comments that have helped to improve the quality of the paper.
\end{acknowledgements}

\bibliographystyle{spmpsci}
\bibliography{sn-bibliography}

\end{document}